\numberwithin{equation}{section}
\newtheorem{teorema}{Teorema}[section]
\newtheorem{proposition}[teorema]{Proposition}
\newtheorem{theorem}[teorema]{Theorem}
\newtheorem{lemma}[teorema]{Lemma}
\newtheorem{ejemplo}[teorema]{Ejemplo}
\newtheorem{corollary}[teorema]{Corollary}
\newtheorem{definition}[teorema]{Definition}
\newcommand{\swedge}{{\scriptstyle\wedge}}
\newcommand{\nwedge}{\mathchoice{{\textstyle\wedge}}%
    {{\wedge}}%
    {{\textstyle\wedge}}%
    {{\scriptstyle\wedge}}}
\newsavebox{\spacebox}
\title[ On connection Bruijn-Erd\"os Theorem and symplectic geometry]{ On connection Bruijn-Erd\"os Theorem and symplectic geometry}
\author[J. Carrillo--Pacheco]{Jes\'us Carrillo--Pacheco}
\address[J. Carrillo--Pacheco ]{Academia de Matem\'aticas, Universidad Aut\'onoma de la Ciudad de M\'exico, 09390 Ciudad de M\'exico,  M\'exico.}
\email[J. Carrillo--Pacheco]{jesus.carrillo@uacm.edu.mx}
\begin{document}

\keywords{ The de  Bruijn - Erd$\ddot{o}$s theorem , configuration of subsets, Incidence matrix, geometria simplectica,  
  $(0,1)$-matrices.}

\subjclass[2010]{05B20, 11T71, 15A21, 15B99, 94B27, 94B35.}

\begin{abstract}
{In this article a family of recursive and self-similar matrices is constructed. It is shown that the Pl\"ucker matrix of the Isotropic Grassmannian variety is a direct sum of this class of matrices.}
\end{abstract}
 \maketitle
\section{Introduction}\label{Secc0}
    The de  Bruijn - Erd$\ddot{o}$s theorem  \cite{bib 0.2},  was proved by Nicolaas Govert de Bruijn and Paul Erd$\ddot{o}s$ $(1951)$.    
In its classical form establishes: 
Let ${\EuScript S}$ be a finite set consisting of $n$ elements, and let $L=\{ S_1, \ldots,  S_m \}$ be a collection of $m$ proper subsets of ${\EuScript S}$. Suppose that\\
 $I)$ each $S_i$ contains at least two elements, \\
$II)$  for every $x, \; y \in{\EuScript S}$, $x\neq y$  there exists exactly one subset $S_i$ such that $x, y \in S_i$\\
   Then $m\geq n$, and $m = n$ if and only if one of the following two cases occurs:\\
$(i)$ there exists $z \in {\EuScript S}$  such that $L$ consists of ${\EuScript S}-\{z \}$ and all the pairs $\{x, z\}$ with $x\neq z$;
$(ii)$ there exists a natural number $k$ such that $n = k(k-1) + 1$ and\\
$a)$ $|S_i|=k$ for every  $i=1,\ldots, m$\\
$b)$ $|S_i\cap S_j|\geq 1$ for al $i\neq j$ \\ 
$c)$ every element of ${\EuScript S}$ belongs to exactly $k$ subsets $S_i$ \\
see  \cite{bib 2.32}. This theorem is very important in discrete mathematics see \cite{bib 2.231} for some applications.\\
  Ryser study an  Bruijn - Erd$\ddot{o}s$ type theorems that deal with the foundations of finite geometries see  \cite[Theorem 1.1]{bib 5b}. 
  
%

\subsection{A bizarre version of the Bruijn-Erdos theorem}
For each pair of integers $2\leq k \leq n$  there exists a set $S=\{s_1, \ldots, s_N \}$ and a family of proper subsets  $\{S_1, \ldots, S_M \}\subseteq S$, such that $M< N$ and 
\begin{description}
\item[I)] $|S_i|=n-\frac{k-2}{2}$ for all $i=1,\ldots, M$
\item [II)]  $|S_{P_{\alpha}}\cap S_{P_{\overline{\alpha}}} |\leq 1$ for all $i\neq j$ and $i, j=1,\ldots, M$
\item[III)] Each $S_i$ has nonempty interseccion  with exactly $k/2$ of the other subsets. 
\item[IV)] The incidence matrix of the family of subsets is a $(0, 1)$-matrix that has a fixed number of
ones in each row and a fixed number of ones in each column, it is also a sparse and self-similar matrix.
\item[V)] There is a matrix, direct sum of matrices of type $IV$ above, which allows determining the Isotropic grassmannian variety, as a subfamily of solutions to the homogeneous system generated by said matrix.  
\end{description}
This article is a continuation of \cite{bib 2.20}, where a study of the Lagrangian-Grassmannian variety is made, which is a particular case of the Isotropic-Grassmannian variety $IG(k, E)$. Here in the present article, the Pl\"ucker matrix of $IG(k, E)$ is exclusively studied, we determine its structure as a direct sum of a family of submatrices, which we call fractal matrices because they are built with a recursive algorithm and have properties of self-similarity.
This article is organized as follows: in section 3 the pl\"ucker matrix is defined, in section 4 it is shown that the Pl\"ucker matrix is a direct sum of a family of submatrices, in section 5 it is they define the fractal matrices and it is shown that the pl\"ucker matrix is a direct sum of fractal matrices. Section 6 gives a method to find the rational points when we consider a finite field.
\section{Preliminary}\label{Secc1}
\subsubsection{indices} Let $m$ be an integer we denote by
  \begin{equation}\label{mbrak}
   [m]:=\{1, 2, \ldots, m \}
   \end{equation}
    to the set of the first $m$ integers.
Let $m$ and $\ell$ be positive integers such that  $\ell< m$ as usual in the literature 
$C^m_{\ell}$ denotes binomial coefficient. If  $\alpha=(\alpha_1,\ldots,\alpha_{\ell})\in {\mathbb N}^{\ell}$ then we define $supp\{ \alpha\}:=\{ \alpha_1, \cdots, \alpha_{\ell}\}$. 
 If $s\geq 1$ is a positive integer and  $\Sigma\subset {\mathbb N}$ is a non-empty set,   
we define  the  sets
\begin{equation}\label{setcomb1}
C_s(\Sigma):=\{\alpha=(\alpha_1, \ldots, \alpha_{s})\in {\mathbb N}^s: \alpha_1< \ldots < \alpha_{s} \;and\; supp \{\alpha \}\subseteq \Sigma \}
\end{equation}
Clearly  if $|\Sigma|=m$ then $|C_s(\Sigma)|=C^m_{\ell}$, 
whit this notation if $\ell < m$  we define $I(\ell, m):=C_{\ell}([m])$. so we have
\begin{equation}\label{setcomb2}
I(\ell,m)=\{\alpha=(\alpha_1, \ldots, \alpha_{s}) :
1\leq\alpha_1<\cdots<\alpha_{\ell}\leq m \}
\end{equation}
In general we say 
\begin{equation}
\alpha=(\alpha_1, \ldots, \alpha_s)\in C_s(\Sigma)  
\end{equation}
it there is a permutation $\sigma$ such that arrange the elements of $supp \{\alpha\}$ in increasing order
then we agreed $(\sigma(\alpha_1), \ldots, \sigma(\alpha_s))\in C_s(\Sigma)$. \\
Also if $\alpha$ and $\beta$ are elements of $C_s(\Sigma)$ then we say that 
\begin{equation}\label{suppiguls}
\alpha=\beta\;  \Leftrightarrow \;  {\text supp}\{\alpha\}={\text supp}\{\beta\}
\end{equation} 
Let $\alpha \in I(n,2n)$, suppose there are $i, j \in supp\; \alpha$ such that $i+j=2n+1$ in this case 
$j=2n-i+1$ and we write this pair as $P_i= (i, 2n-i+1)$ so we define the set  
\begin{equation}\label{setSigma0001}
\Sigma_n=\big\{ P_1, \ldots, P_n\big\}
\end{equation}
 and if  $\alpha \in I(n,2n)$ such that $\{ i, 2n-i+1\}\subset supp\{ \alpha \}$, then we say that 
$P_i\in supp\{ \alpha \}$ and that $P_i\in supp\{\alpha\}\cap \Sigma_m$.
We denote 
\begin{equation}\label{sigmaaaa1}
C_{\frac{k}{2}}(\Sigma_n):=\{P_{\beta}=(P_{\beta_1},\ldots, P_{\beta_{k/2}}):\beta\in I(k/2, n) \}
\end{equation}
and 
\begin{equation}\label{sigmaaaa2}
C_{\frac{k-2}{2}}(\Sigma_n):=\{P_{\alpha}=(P_{\alpha_1},\ldots, P_{\alpha_{(k-2)/2}}):\alpha\in I((k-2)/2, n) \}
\end{equation}

If $1\leq a_1<a_2<\cdots<a_{2\ell}\leq 2n$ such that $a_i+a_j\neq 2n+1$  then we define 
\begin{equation}\label{setSigma0011}
\Sigma_{a_1,\ldots,a_{2\ell}}=\Sigma_n-\{P_{a_1}, \dots, P_{a_{2\ell}}\}
\end{equation}
so $|\Sigma_{a_1,\ldots,a_{2\ell}}|=n-2\ell$. 
Clearly if $n\geq 4$ then for each $\alpha\in I(n, 2n)$ we have to $|supp \{\alpha\}\cap \Sigma_n|=0$ or $|supp \{\alpha\} \cap \Sigma_n|=|supp \{\alpha\}|$ or $1\leq |supp \{\alpha\}\cap \Sigma_n|\leq\lfloor\frac{n-2}{2}\rfloor$ respectively.
Note that $|supp P_{\beta}|=2|supp \beta|$.\\
\subsection{Incidence}\label{prelimin1}
Following \cite[pag. 3]{bib 0.1} we call  $X=\{ x_1, \ldots, x_n\}$ an $n$-set. Now let $X_1, X_2, \dots, X_m$ be $m$ not necessarily distinct subsets of the $n$-set $X$. We refer to this  colletions of subsets of an $n$-set as a  {\it configuration of  subsets}. We set  $a_{ij}=1$ if $x_j\in X_i$  and we set $a_{ij}=0$ if $x_j\notin X_i$.  The resulting $(0, 1)$-matrix $A=(a_{i,j})$, $i=1,\ldots, m$, $j=1,\ldots, n$ of size $m$ by $n$ is the \it{incidense matrix}
 for the configurations of subsets $X_1, X_2, \dots, X_m$ of the $n$-set $X$. The $1^{\prime}$s in row $\alpha_i$ of
 $A$ display the elements in the subsets $X_i$ and the $1^{\prime}$s in column $\beta_j$ display the ocurrences of the element $x_j$ among the subsets.\\
 Let  $S=\{s_1\ldots, s_n \}$ an $n$-set and $S_1, \ldots, S_m$ be $m$ subsets of the $n$-set $S$ and ${\EuScript A}$ the
 $m\times n$ incidence matrix, for the configuration of subsets $S_1, \ldots, S_m$.
The pair
 \begin{equation}\label{confinc}
  \big(S,  S_i )_{i=1}^m
  \end{equation}
  we call {\it configuration of incidence of $S$}.
 If $(S^{\prime},  S_i^{\prime} \big)_{i=1}^m$, donde $S^{\prime}=\{s^{\prime}_1\ldots, s^{\prime}_n \}$, is other configuration of incidence then they are isomorphic  if and only if there is a bijection $$\psi: S\longrightarrow S^{\prime}$$
 $$ \psi(s_i)=s_i^{\prime}$$ such that $\psi(S_i)=S_i^{\prime}$ 
 for all $i=1, \ldots, m$ and note that  ${\EuScript A}={\EuScript A}^{\prime}$ where ${\EuScript A}$ and  ${\EuScript A}^{\prime}$ are $(m\times n)$-incidence matrices.\\
 Let $(S^{\prime},  S_i^{\prime} \big)_{i=1}^m$ an incidence configuration then using the cartesian product we define \begin{equation}\label{confcart}
 j \times (S,  S_i )_{i=1}^m:=\bigg( \{ j \}\times S, \; \{ j \}\times S_i\bigg)_{i=1}^m 
 \end{equation}
an incidence configuration.
\begin{lemma}\label{incconfisomr}
The incidence configuration $\{ j \} \times (S,  S_i )_{i=1}^m$ is isomorphic to the configuration $(S,  S_i )_{i=1}^m$
and have the same incidence matrix
\end{lemma}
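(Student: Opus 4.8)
The plan is to exhibit an explicit index-preserving bijection and then read off the two incidence matrices directly from the definitions. First I would equip the set $\{j\}\times S=\{(j,s_1),\ldots,(j,s_n)\}$ with the enumeration inherited from $S$, declaring $(j,s_k)$ to be its $k$-th element. This choice is what makes the two incidence matrices coincide literally rather than merely agree up to a permutation of columns, so fixing it at the outset is the one point that needs a little care.

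Next I would define the map $\psi\colon S\longrightarrow \{j\}\times S$ by $\psi(s_k)=(j,s_k)$. It is immediate that $\psi$ is a bijection and that it sends the $k$-th element of $S$ to the $k$-th element of $\{j\}\times S$, so it meets the index-matching requirement $\psi(s_k)=s_k'$ built into the definition of isomorphism of incidence configurations. To verify the remaining condition I would compute, for each $i=1,\ldots,m$,
\[
\psi(S_i)=\{\psi(s):s\in S_i\}=\{(j,s):s\in S_i\}=\{j\}\times S_i,
\]
which is exactly the $i$-th subset of the configuration $\{j\}\times(S,S_i)_{i=1}^m$. Hence $\psi$ is an isomorphism of incidence configurations in the sense of the definition above.

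Finally, for the claim about incidence matrices, let ${\EuScript A}=(a_{ik})$ be the incidence matrix of $(S,S_i)_{i=1}^m$ and let ${\EuScript A}'=(a'_{ik})$ be that of $\{j\}\times(S,S_i)_{i=1}^m$ taken with respect to the enumeration fixed in the first step. By definition $a'_{ik}=1$ precisely when the $k$-th element $(j,s_k)$ lies in $\{j\}\times S_i$, and the elementary equivalence $(j,s_k)\in\{j\}\times S_i\iff s_k\in S_i$ yields $a'_{ik}=a_{ik}$ for all $i,k$; therefore ${\EuScript A}={\EuScript A}'$. I do not anticipate any genuine obstacle here, since the content is purely bookkeeping: the only thing that could go wrong is an inconsistent ordering of $\{j\}\times S$, and the first step removes exactly that possibility.
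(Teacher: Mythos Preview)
Your proof is correct and follows essentially the same approach as the paper: both arguments rest on the obvious bijection between $S$ and $\{j\}\times S$ together with the equivalence $(j,s_k)\in\{j\}\times S_i\iff s_k\in S_i$. The only cosmetic difference is that the paper uses the projection $\psi:\{j\}\times S\to S$, $(j,s)\mapsto s$, whereas you use its inverse; you are also slightly more explicit about fixing the enumeration of $\{j\}\times S$, which the paper leaves implicit.
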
 
\begin{proof}
Given that $|\{ j \} \times S|= |S|$ and $|\{ j \} \times S_i |=|S_i |$ for all  $i=1, \ldots, m$ then the projection mapping
$$\psi: \{ j \} \times S \longrightarrow S $$    $$(j,s) \longmapsto s$$ 
is one-one and clearly $\psi_{|\{ j  \}\times S_i}=S_i$ so the configurations are isomorphic. Furthermore, we have
$(j, s)\in \{ j \}\times S_i$ si y solo si $s\in S_i$ and thus both configurations have the same incidence matrix.
\end{proof}
 The following lemma is a direct consequence of his hypotheses.
 \begin{lemma}\label{matrixsubconf001}
  Let $S$ be an $n$-set,  $(S, S_i)_{i \in \Omega}$ and  $(S, S_i)_{i\in \Omega^{\prime}}$ two incidence configurations  such that $\Omega^{\prime}\subseteq \Omega$ and let $ {\EuScript A}_{\Omega}$,  ${\EuScript A}_{\Omega^{\prime}}$ 
  their respective configuration matrices then ${\EuScript A}_{\Omega^{\prime}}$ is a submatrix of $ {\EuScript A}_{\Omega}$.
 \end{lemma}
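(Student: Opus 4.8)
The plan is to unwind the definition of the incidence matrix given in Subsection~\ref{prelimin1} and to observe that nothing in that definition depends on the index set beyond selecting which rows appear. First I would fix the common column labelling: both configurations are built on the same $n$-set $S=\{s_1,\ldots,s_n\}$, so both ${\EuScript A}_{\Omega}$ and ${\EuScript A}_{\Omega'}$ have their columns indexed by $j\in[n]$, each column $j$ recording the element $s_j$. Thus there is no column mismatch to reconcile, and the only place the two matrices can differ is in their rows.

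Next I would compare the entries directly. By definition the $(i,j)$-entry $a_{ij}$ of either matrix equals $1$ precisely when $s_j\in S_i$ and equals $0$ otherwise; this rule refers only to the fixed subset $S_i$ and the fixed element $s_j$, and it is insensitive to whether $i$ is regarded as an index in $\Omega$ or in $\Omega'$. Hence for every $i\in\Omega'$ and every $j\in[n]$ the row of ${\EuScript A}_{\Omega'}$ labelled $i$ coincides, entry by entry, with the row of ${\EuScript A}_{\Omega}$ labelled $i$.

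Finally I would assemble the conclusion. Since $\Omega'\subseteq\Omega$, the rows of ${\EuScript A}_{\Omega'}$ form exactly the collection of rows of ${\EuScript A}_{\Omega}$ indexed by $\Omega'$; deleting from ${\EuScript A}_{\Omega}$ the rows indexed by $\Omega\setminus\Omega'$ while retaining all $n$ columns yields precisely ${\EuScript A}_{\Omega'}$. By the standard definition of a submatrix as the array obtained by striking out some rows and/or columns, this exhibits ${\EuScript A}_{\Omega'}$ as a (row-)submatrix of ${\EuScript A}_{\Omega}$.

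As for difficulty, I expect essentially no obstacle: the statement is purely definitional, as the text itself signals by calling it a direct consequence of the hypotheses. The only point demanding care is a notational one, namely to state explicitly that here \emph{submatrix} means the row-submatrix obtained by keeping all columns, so that the claim is not misread as permitting an arbitrary reindexing or deletion of columns.
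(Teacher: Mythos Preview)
Your proposal is correct and matches the paper's approach: the paper offers no proof at all beyond the remark that the lemma is a direct consequence of its hypotheses, and your argument is precisely the definitional unpacking that this remark invites. Your added clarification that ``submatrix'' here means the row-submatrix obtained by retaining all columns is apt and makes the statement precise.
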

 \qed
 \begin{corollary}\label{sumconfs}
 Let $S$ be an $n$-set,  $(S, S_i)_{i \in \Omega}$, $(S, S_i)_{i\in \Omega_1}$ and $(S, S_i)_{i\in \Omega_2}$   incidence configurations. If  $\Omega=\Omega_1\cup \Omega_2$ and $\Omega_1\cap \Omega_2=\emptyset$ then      ${\EuScript A}_{\Omega}\sim{\EuScript A}_{\Omega_1}\oplus {\EuScript A}_{\Omega_2}$ up to row permutation and where ${\EuScript A}_{\Omega}$, 
 ${\EuScript A}_{\Omega_1}$ and ${\EuScript A}_{\Omega_2}$ are the respective configuration matrices.
 \end{corollary}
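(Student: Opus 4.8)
The plan is to prove Corollary~\ref{sumconfs} by direct application of Lemma~\ref{matrixsubconf001} together with an explicit accounting of the rows of the incidence matrices.

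First I would observe that since $\Omega_1 \subseteq \Omega$ and $\Omega_2 \subseteq \Omega$, Lemma~\ref{matrixsubconf001} immediately tells us that ${\EuScript A}_{\Omega_1}$ and ${\EuScript A}_{\Omega_2}$ are each submatrices of ${\EuScript A}_{\Omega}$. More precisely, since the columns of all three matrices are indexed by the same $n$-set $S$, each row of ${\EuScript A}_{\Omega}$ is the incidence row of some subset $S_i$ with $i \in \Omega$, and the rows corresponding to indices in $\Omega_1$ (respectively $\Omega_2$) are exactly the rows of ${\EuScript A}_{\Omega_1}$ (respectively ${\EuScript A}_{\Omega_2}$). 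The key structural input is the hypothesis that $\Omega = \Omega_1 \cup \Omega_2$ with $\Omega_1 \cap \Omega_2 = \emptyset$: this disjoint-union decomposition means every index $i \in \Omega$ lies in exactly one of $\Omega_1$ or $\Omega_2$, so the rows of ${\EuScript A}_{\Omega}$ partition, without repetition or omission, into the rows of ${\EuScript A}_{\Omega_1}$ and those of ${\EuScript A}_{\Omega_2}$.

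Next I would reorder the rows of ${\EuScript A}_{\Omega}$ by a permutation that lists all indices of $\Omega_1$ first, followed by all indices of $\Omega_2$. Under this row permutation the matrix acquires the block form
\begin{equation}
P \, {\EuScript A}_{\Omega} = \begin{pmatrix} {\EuScript A}_{\Omega_1} \\ {\EuScript A}_{\Omega_2} \end{pmatrix},
\end{equation}
where $P$ is the corresponding permutation matrix. Since all three matrices share the same column index set $S$, this stacked form is precisely the direct sum ${\EuScript A}_{\Omega_1} \oplus {\EuScript A}_{\Omega_2}$ in the sense of concatenating rows over a common column set. Hence ${\EuScript A}_{\Omega} \sim {\EuScript A}_{\Omega_1} \oplus {\EuScript A}_{\Omega_2}$ up to the row permutation $P$, which is exactly the claimed equivalence.

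I do not anticipate a serious obstacle here, since the result is flagged in the excerpt as \emph{a direct consequence of its hypotheses}; the only point requiring slight care is the bookkeeping convention for $\oplus$. In the usual block-diagonal reading, a direct sum would place ${\EuScript A}_{\Omega_1}$ and ${\EuScript A}_{\Omega_2}$ in disjoint column blocks as well, whereas here the two configurations live over the \emph{same} $n$-set $S$ and therefore share columns. The mild subtlety is thus to make explicit that, in this incidence-matrix setting, the operative meaning of $\oplus$ is vertical concatenation of row blocks over a fixed common column index, and to verify that the disjointness $\Omega_1 \cap \Omega_2 = \emptyset$ is precisely what guarantees no row is counted twice and no row is missed, so that the row count of ${\EuScript A}_{\Omega}$ equals the sum of the row counts of the two summands.
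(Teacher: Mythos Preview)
Your proof is correct and follows essentially the same approach as the paper: invoke Lemma~\ref{matrixsubconf001} to identify ${\EuScript A}_{\Omega_1}$ and ${\EuScript A}_{\Omega_2}$ as submatrices, then use the disjoint decomposition $\Omega=\Omega_1\sqcup\Omega_2$ to reorder rows into two blocks. The paper's proof simply asserts the block form
\[
{\EuScript A}_{\Omega} \sim \left[ \begin{array}{c|c} {\EuScript A}_{\Omega_1} & 0 \\ \hline 0 & {\EuScript A}_{\Omega_2} \end{array} \right]
\]
without comment, whereas you explicitly address the bookkeeping point that all three matrices share the same column set $S$, so that the literal block-diagonal picture (with disjoint column blocks) is not what the hypotheses alone give---the honest conclusion is a vertical stack over common columns. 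Your version is the more careful one; the paper's display is really anticipating the later application (Theorem~\ref{sumdirect}), where the subsets $S_{\alpha_{rs}}$ indexed by different pieces of the partition of $I(k-2,2n)$ happen also to land in disjoint column blocks, so that genuine block-diagonality holds.
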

 \begin{proof}
 By lemma \ref{matrixsubconf001} ${\EuScript A}_{\Omega_1}$ and ${\EuScript A}_{\Omega_2}$ son submatrices de 
 ${\EuScript A}_{\Omega}$ also by hypothesis we have that $\Omega=\Omega_1\cup \Omega_2$ and $\Omega_1\cap \Omega_2=\emptyset$ then clearly 
 $$ {\EuScript A}_{\Omega} \sim \left[ \begin{tabular}{c|c}
 ${\EuScript A}_{\Omega_1}$ & $ 0 $ \cr\hline $0$  & ${\EuScript A}_{\Omega_2}$
\end{tabular} \right] $$ and so ${\EuScript A}_{\Omega}\sim{\EuScript A}_{\Omega_1}\oplus {\EuScript A}_{\Omega_2}$ 
 up to row permutation.
 \end{proof}
 \subsection{Matrices}
 A $(0,1)$-matrix is a matrix in which each element is either $0$ or is $1$. A  $(k, \ell)$-matrix is a $(0, 1)$-matrix with $k$ ones in each row and $\ell$ ones in each column, for uses and applications of this type of matrices see \cite{bib 0.1},  \cite{bib 2},  \cite{bib 5a} and  \cite{bib 5}. A sparse matrix is a $(0,1)$-matrix if many of its elements are zero. There are two broad types of sparse matrices: structured and unstructure see \cite[Chapter 3 ]{bib 5.001}  and they have many properties and are applied to different areas of mathematics, such as the theory of error-correcting detector codes, see \cite{bib 3.5}, \cite{bib 3} and \cite{bib 4}. 
A matrix is of \it{binomial order} if its order is a product of two binomial numbers.
\begin{definition}\label{fractusmatrix01}
Let $k\geq 2$ and $\ell\geq 2$ be arbitrary integers, we say that $A$ is a {\it fractal matrix} if it is
a matrix which is built with a recursive algorithm, $(k,\ell)$-matrix, tetra-fragmented, sparse, of binomial order and where $B$ and $C$ are also matrix fractal. 
\end{definition}
\subsection{Geometry}
Let $E$ a finite-dimensional vector space over a field ${\mathbb F}$.\\
Let us consider the following binary relation defined in $E-\{ 0\}$.\\
$$ x\sim y \;\; if\; and\; only\; if\;exists\; \lambda\in {\mathbb F}^*\;such\;that\; y=\lambda x$$ 
The set of equivalence classes
\begin{equation}
{\mathbb P}(E)=\frac{E}{\sim}
\end{equation}
 receives the name of \textit{ Projective Space} deduced from $E$ see \cite{bib 2.23}. Also of   \cite{bib 2.23} if $A\subset {\mathbb F}[x_1, \ldots, x_s]$ of homogeneous polynomials,   then 
$Z(A)=\{ [x]\in  {\mathbb P}^s: p(x)=0 \;for\;all\;p\in A \}$ denotes the set of zeros of $A$ in  projective space $ {\mathbb P}^s$.\\
 Let $E$ a vector space with a base $\{ e_1,\ldots, e_{2n}\}$   and $2\leq k \leq n$, denote  by $\nwedge^kE$ the
$k$-th exterior power of $E$, which is  generated as vector space by $\{ e_{\alpha}:=e_{\alpha_1}\wedge\cdots \wedge e_{\alpha_k} : \alpha \in I(k,2n)\}$. For $w=\sum_{\alpha\in
I(k, 2n)}p_{\alpha}e_{\alpha} \in \nwedge^kE$, the coefficients
$p_{\alpha}$ are the {\it Pl\"ucker coordinates of} $w$, see  \cite[pag42]{bib 6}, we denote by $G(k, E)$ to the Grassmannian, the set of vector subspaces of dimension $k$ of $E$. The Grassmannian $G(k, E)$ is a algebraic variety of dimension $k(2n-k)$ and can be embedded in a projective space ${\mathbb P}^{c-1}$, where $c=\binom{2n}{k}$ by Pl\"ucker embedding.
The {\it Pl\"ucker embedding} is the injective mapping
\begin{equation}\label{embeddingpluck}
 \rho:G(k, E)\rightarrow {\mathbb P}(\wedge^{k}E)
 \end{equation}
  given on each $W\in G(k, E)$ by choosing  a basis
$w_1,\ldots, w_k$ of $W$ and then mapping the vector subspace
$W\in G(k, E)$ to the tensor $w_1\swedge\cdots\swedge w_k\in
\nwedge^kE$. Since choosing a different basis  of $W$ changes the
tensor $w_1\swedge\cdots\swedge w_k$ by a nonzero scalar, this
tensor is a well-defined element in the projective space ${\mathbb
P}(\nwedge^kE)\simeq{\mathbb P}^{N-1}$, where $N=C^{2n}_k=\dim_{\mathbb{F}}(\nwedge^kE)$. 
If $w = \sum_{\alpha \in I(k, 2n)}P_\alpha e_\alpha \in
{\mathbb{P}}(\wedge^k E)$, then $w\in G(k, 2n)$ if and only if for
each pair of tuples $1\leq \alpha_1< \cdots <
\alpha_{k-1}\leq 2n$ and $1\leq \beta_1< \cdots < \beta_{k+1}\leq 2n$, the Pl\"ucker coordinates of $w$ satisfy the quadratic \it{Pl\"ucker relation}
\begin{equation}\label{eq1.1}
 Q_{\alpha, \beta}:=\sum_{i=1}^{k+1} (-1)^i X_{\alpha_1\cdots \alpha_{k-1}\beta_i}X_{\beta_1, \beta_2\cdots
\widehat{\beta_i} \cdots \beta_{k+1}} \in {\mathbb F}[X_{\alpha}]_{\alpha\in I(k, 2n)}
\end{equation}
where  $\widehat{\beta_i}$ means that the corresponding term is omitted and where $\alpha=(\alpha_1\cdots \alpha_{k-1}) \in I(k-1, 2n)$,  $\beta=(\beta_1, \beta_2\cdots
\beta_i \cdots \beta_{k+1}) \in I(k+1, 2n)$, see \cite[section 4]{bib 6}. 
\subsection{Isotropic Grassmannian}
Following \cite{bib 0.001}, let $E$ be  an $2n$-dimensional  vector space over an arbitrary field
${\mathbb F}$ with a nondegenerate, skew-symmetric, standard  bilinear of the  form $\langle x, y \rangle=\sum_{i=1}^n [(x_i\cdot y_{2n+i-1})-(x_{2n+1-i}\cdot y_i)]$. Then there is a basis ${\mathfrak{B}}=\{ e_1, \ldots ,e_{2n}\}$, of $E$ such that
\begin{equation}\label{basesym700}
 \langle\; e_i , e_{j}\;\rangle=\begin{cases}
  1    & \text{if $j=2n-i+1$ for $1\leq i\leq n$}, \\
   0  & \text{otherwise}.
\end{cases}
\end{equation}
we will call this base {\it  simplectic base}. 
Here we say that  
\begin{equation}\label{simplspace11}
E=(E, \langle\; ,\;\rangle)
\end{equation}
is a symplectic vector space of dimension $2n$.
Recall that a vector subspace $W\subseteq E$ is {\it isotropic} if $\langle
x,y\rangle=0$ for all $x,y\in W$, and if $W$ is isotropic its
dimension is at most $n$. For an integer $2\leq k\leq n$, the $k$-th {\it Isotropic Grassmannian}
is the set
\begin{gather*}
IG(k,E)=\{W\in G(k,E): W\;\text{is isotropic of dimension } k \},
\end{gather*}
where as before $G(k,E)$ denotes the Grassmannian variety of vector subspaces
of dimension $k\leq n$ of $E$ and parameterizes all isotropic vector subspaces of dimension $k$ of $E$. So
\begin{equation}\label{tonal1}
IG(k,E)=\{[w_1\wedge\cdots\wedge w_k]\in G(k, E): \langle w_i,w_j\rangle=0\;\text{for all $1\leq i<j\leq k$}\}
\end{equation}
If we do $k=n$ then $IG(n, E)$ parameterizes all maximal simplectic subspaces of $E$ and we call it Lagrangian-Grassmannian de $E$ and we denote  by $L(n, E)$.\\
The following lemma is well known see \cite[pag. 8]{bib 0.001}:\\
\begin{lemma}\label{cannaslem}
\begin{description}
\item[ a)] For any vector space $E$, the direct sum $V=E\oplus E^*$ has a canonical symplectic structure determined by the formula above $$\Omega_0(u\oplus \alpha, v\oplus \beta)=\beta(u)-\alpha(v) $$
\item[b)] If $Y$ is a lagrangian subspace, $(V, \Omega)$ is symplectomorphic to the space $(Y\oplus Y^*, \Omega_0)$, where $\Omega_0$ is determined by the formula above.
\end{description}
\end{lemma}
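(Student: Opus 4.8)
The plan is to treat the two parts separately, the first being a routine verification and the second resting on the existence of a Lagrangian complement.

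For part (a) I would check the three defining properties of a symplectic form directly from the formula $\Omega_0(u\oplus\alpha,\,v\oplus\beta)=\beta(u)-\alpha(v)$. Bilinearity is immediate, since $\beta(u)$ and $\alpha(v)$ are each linear in the vector entry and in the covector entry. Skew-symmetry follows at once, because interchanging the two arguments sends $\beta(u)-\alpha(v)$ to $\alpha(v)-\beta(u)=-(\beta(u)-\alpha(v))$. For non-degeneracy, suppose $\Omega_0(u\oplus\alpha,\,v\oplus\beta)=0$ for every $v\oplus\beta\in V$; setting $\beta=0$ gives $\alpha(v)=0$ for all $v\in E$, hence $\alpha=0$, and setting $v=0$ gives $\beta(u)=0$ for all $\beta\in E^*$, hence $u=0$ because the linear functionals separate the points of $E$. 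Thus $u\oplus\alpha=0$ and $\Omega_0$ is non-degenerate, so $(V,\Omega_0)$ is symplectic.

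For part (b) the key step is to produce a Lagrangian complement $Z$ of $Y$, that is, a Lagrangian subspace with $V=Y\oplus Z$. Since $Y$ is Lagrangian it equals its own $\Omega$-orthogonal $Y^\perp$ and has dimension $n$, so the map $\rho\colon V\to Y^*$, $\rho(v)(y)=\Omega(y,v)$, has kernel $Y^\perp=Y$ and is therefore surjective. Starting from a basis $e_1,\dots,e_n$ of $Y$ I would pick preimages $v_i$ with $\Omega(e_j,v_i)=\delta_{ij}$ and correct them by vectors of $Y$, setting $f_i=v_i+\sum_k c_{ik}e_k$; the relations $\Omega(e_j,f_i)=\delta_{ij}$ persist while $\Omega(f_i,f_j)=\Omega(v_i,v_j)+c_{ij}-c_{ji}$, so choosing the $c_{ik}$ to cancel the skew matrix $\Omega(v_i,v_j)$ (for instance $c_{ij}=-\Omega(v_i,v_j)$ for $i<j$ and $0$ otherwise, which works in any characteristic) makes $Z=\operatorname{span}(f_1,\dots,f_n)$ isotropic, hence Lagrangian.

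With $Z$ in hand I would define $\phi\colon Z\to Y^*$ by $\phi(z)(y)=\Omega(y,z)$; its kernel is $Z\cap Y^\perp=Z\cap Y=0$, so by a dimension count $\phi$ is an isomorphism. Then $\Phi\colon V=Y\oplus Z\to Y\oplus Y^*$, $\Phi(y\oplus z)=y\oplus\phi(z)$, is a linear isomorphism, and I would verify $\Phi^*\Omega_0=\Omega$ by expanding both sides on $v=y_1\oplus z_1$ and $v'=y_2\oplus z_2$: the Lagrangian conditions kill $\Omega(y_1,y_2)$ and $\Omega(z_1,z_2)$, leaving $\Omega(v,v')=\Omega(y_1,z_2)+\Omega(z_1,y_2)$, which matches $\Omega_0(\Phi v,\Phi v')=\phi(z_2)(y_1)-\phi(z_1)(y_2)=\Omega(y_1,z_2)-\Omega(y_2,z_1)$. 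Hence $\Phi$ is a symplectomorphism. The only genuine obstacle is the construction of the Lagrangian complement; once that existence is secured, the identification with $(Y\oplus Y^*,\Omega_0)$ is a formal computation in bilinear algebra.
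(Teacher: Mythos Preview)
Your argument is correct and complete. The paper itself does not supply a proof of this lemma: it simply records the statement as ``well known'' and refers the reader to Cannas da Silva's lecture notes, so there is no proof in the paper to compare against. Your treatment is exactly the standard one found in that reference---verifying the axioms of a symplectic form for part (a), and for part (b) building a Lagrangian complement $Z$ via a symplectic (Darboux-type) basis and then identifying $Y\oplus Z$ with $Y\oplus Y^*$ through $z\mapsto\Omega(\,\cdot\,,z)$. The only step you leave slightly implicit is that your corrected vectors $f_i$ actually yield $V=Y\oplus Z$; this follows at once from $\Omega(e_j,f_i)=\delta_{ij}$, which forces the $e_j$ and $f_i$ together to be linearly independent.
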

\qed
 \subsection{Contraction Map}\label{prelimin2}
 Let $E$ be  an $2n$-dimensional  vector space over an arbitrary field
${\mathbb F}$ with a nondegenerate, skew-symmetric bilinear form $\langle\; ,\;\rangle$.   
Consider  a basis ${\mathfrak{B}}=\{ e_1, \ldots ,e_{2n}\}$ as in  \ref{basesym700}.
Then we define the {\it contraction map }
$f:\wedge^kE\rightarrow \wedge^{k-2}E$  given by
\begin{equation}\label{cont-map}
f(w_1\wedge\cdots\wedge w_k)=\sum_{1\leq r<s\leq k}\langle w_r,w_s\rangle(-1)^{r+s-1}
w_1\wedge\cdots\wedge \widehat{w}_r\wedge\cdots\wedge
\widehat{w}_s\wedge\cdots\wedge w_k 
\end{equation}
where $\widehat{w}$ means
that the corresponding term is omitted. 
\begin{proposition}\label{tonal12}
Let $E$ symplectic vector space of dimension $2n$, over an arbitrary field ${\mathbb F}$ with a nondegenerate, skew-symmetric bilinear form $\langle\; ,\;\rangle$,  then  $$IG(k, E)=G(k, E)\cap \ker f$$
\end{proposition}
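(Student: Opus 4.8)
The plan is to prove the set equality by double inclusion, working throughout with a representative decomposable $k$-vector. Recall that a point of $G(k,E)$ is by definition of the form $[w]$ with $w=w_1\wedge\cdots\wedge w_k\neq 0$, so that $w_1,\ldots,w_k$ are linearly independent and span a $k$-dimensional subspace $W$; by \eqref{tonal1} such a point lies in $IG(k,E)$ precisely when $\langle w_i,w_j\rangle=0$ for all $1\leq i<j\leq k$. Since $f$ is linear its kernel is a linear subspace and the condition $f(w)=0$ is invariant under rescaling $w$, so membership of $[w]$ in $G(k,E)\cap\ker f$ is equivalent to $w$ being decomposable with $f(w)=0$. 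The inclusion $IG(k,E)\subseteq G(k,E)\cap\ker f$ is then immediate: if $\langle w_r,w_s\rangle=0$ for every pair $r<s$, each summand of \eqref{cont-map} carries a zero coefficient, whence $f(w)=0$.

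For the reverse inclusion $G(k,E)\cap\ker f\subseteq IG(k,E)$, I would take $[w]\in G(k,E)$ with $f(w)=0$, say $w=w_1\wedge\cdots\wedge w_k$. First observe that since $w\neq 0$ the vectors $w_1,\ldots,w_k$ are linearly independent, so they can be completed to a basis $w_1,\ldots,w_{2n}$ of $E$. With respect to this basis the family
$$\{\, w_{i_1}\wedge\cdots\wedge w_{i_{k-2}} : 1\leq i_1<\cdots<i_{k-2}\leq 2n \,\}$$
is a basis of $\wedge^{k-2}E$. The decisive remark is that the $\binom{k}{2}$ terms $w_1\wedge\cdots\wedge\widehat{w}_r\wedge\cdots\wedge\widehat{w}_s\wedge\cdots\wedge w_k$ occurring in \eqref{cont-map} are exactly the distinct basis vectors indexed by the $(k-2)$-subsets $\{1,\ldots,k\}\setminus\{r,s\}$, and are therefore linearly independent. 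Consequently $f(w)=0$ is a vanishing linear combination of independent vectors with coefficients $(-1)^{r+s-1}\langle w_r,w_s\rangle$, forcing every coefficient to be zero. Hence $\langle w_r,w_s\rangle=0$ for all $r<s$, so $W=\mathrm{span}(w_1,\ldots,w_k)$ is isotropic and $[w]\in IG(k,E)$.

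The only genuinely non-formal step, and the one I expect to be the main obstacle, is this linear independence of the omitted-pair wedges: everything hinges on the fact that distinct pairs $\{r,s\}\subset\{1,\ldots,k\}$ give distinct $(k-2)$-element index sets, so the corresponding wedge products are separate members of a basis of $\wedge^{k-2}E$ once $w_1,\ldots,w_k$ have been completed to a basis of $E$. I would therefore be careful to justify first that $w\neq 0$ really forces $w_1,\ldots,w_k$ to be linearly independent (the standard nonvanishing criterion for a decomposable $k$-vector), since the coefficient-matching argument collapses without it; note that the skew-symmetry and nondegeneracy of $\langle\;,\;\rangle$ are needed only to make \eqref{cont-map} a well-defined map on $\wedge^k E$, not for the isotropy conclusion itself.
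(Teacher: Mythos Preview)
Your argument is correct, and the paper itself offers no proof at all---the proposition is stated and immediately followed by a \qed. Your double-inclusion argument is the standard one: the forward inclusion is immediate from \eqref{cont-map}, and for the reverse the linear-independence step you flag as the ``main obstacle'' is in fact routine, since extending the independent set $w_1,\ldots,w_k$ to a basis of $E$ makes the omitted-pair wedges distinct members of the induced basis of $\wedge^{k-2}E$.
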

\qed\\
Now for the given basis ${\mathfrak{B}}=\{ e_1, \ldots ,e_{m}\}$ of the vector space $E$, and for
$\alpha=(\alpha_1,\ldots,\alpha_k)\in I(k,2n)$, write
\begin{align*}\label{nottation}
e_{\alpha}&:=e_{\alpha_1}\swedge\cdots\swedge e_{\alpha_k},\\
e_{\alpha_{rs}}&:=e_{\alpha_1}\swedge\cdots\swedge\widehat{e}_{\alpha_r}
\swedge\cdots\swedge\widehat{e}_{\alpha_s}\swedge\cdots   \swedge
e_{\alpha_k}\\
P_{\alpha}&=(P_{\alpha_1}, \ldots, P_{\alpha_k})
\end{align*}
\begin{theorem}\label{Pi-alpha} Let $E$ symplectic vector space of dimension $2n$, over an arbitrary field
${\mathbb F}$ with  a  base as in \ref{basesym700} and a nondegenerate, skew-symmetric bilinear form $\langle\; ,\;\rangle$. Let $w\in \wedge^kE$
written in Pl\"ucker coordinates as $w=\sum X_{\alpha}e_{\alpha}$, then
\begin{equation}\label{eq2.1}
 f(w)=\sum_{1\leq r<s\leq k}\bigg(\sum_{i=1}^nX_{i\alpha_{rs}(2n-i+1)}\bigg)e_{\alpha_{rs}}
\end{equation} 
\end{theorem}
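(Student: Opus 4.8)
The plan is to reduce the computation to the action of $f$ on the basis monomials $e_\alpha$ and then to invoke the explicit form of the symplectic pairing recorded in \eqref{basesym700}. Since $f$ is linear and $w=\sum_{\alpha\in I(k,2n)}X_\alpha e_\alpha$, I would first write $f(w)=\sum_{\alpha\in I(k,2n)}X_\alpha\,f(e_\alpha)$, so that everything rests on evaluating $f(e_\alpha)$ for a single increasing multi-index $\alpha=(\alpha_1,\dots,\alpha_k)$.

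For such an $\alpha$, applying the defining formula \eqref{cont-map} of the contraction map gives
\[
f(e_\alpha)=\sum_{1\le r<s\le k}\langle e_{\alpha_r},e_{\alpha_s}\rangle(-1)^{r+s-1}e_{\alpha_{rs}},
\]
where $e_{\alpha_{rs}}$ is the monomial obtained by deleting the $r$-th and $s$-th factors, and which is already a standard basis vector of $\wedge^{k-2}E$ because $\alpha$ is increasing. The key simplification comes from \eqref{basesym700}: since $\alpha_r<\alpha_s$, the pairing $\langle e_{\alpha_r},e_{\alpha_s}\rangle$ equals $1$ exactly when $\alpha_s=2n-\alpha_r+1$ (equivalently $\alpha_r\le n$ and $\{\alpha_r,\alpha_s\}$ is a conjugate pair in the sense of \eqref{setSigma0001}) and vanishes otherwise. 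Hence only those index pairs $(r,s)$ whose entries form a conjugate pair survive, and setting $i=\alpha_r$ so that $\alpha_s=2n-i+1$, each surviving term contributes $(-1)^{r+s-1}e_{\alpha_{rs}}$.

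Summing over $\alpha$ and regrouping is the remaining step: I would collect the surviving contributions according to the deleted pair. For a fixed conjugate pair $(i,2n-i+1)$ removed in positions $(r,s)$, the coefficient $X_\alpha$ is precisely the Pl\"ucker coordinate of the $k$-index $i\,\alpha_{rs}\,(2n-i+1)$ obtained by reinserting $i$ and $2n-i+1$ around the retained block $\alpha_{rs}$. Summing $i$ from $1$ to $n$ then assembles the inner sum $\sum_{i=1}^n X_{i\alpha_{rs}(2n-i+1)}$ attached to $e_{\alpha_{rs}}$, and summing over the pairs $(r,s)$ yields the asserted identity \eqref{eq2.1}.

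The main obstacle is the sign and reindexing bookkeeping. Writing the Pl\"ucker coordinates antisymmetrically, $X_{\sigma(\beta)}=\operatorname{sgn}(\sigma)X_\beta$, one must check that the sign $(-1)^{r+s-1}$ produced by \eqref{cont-map} is exactly the permutation sign needed to pass from the sorted coordinate to $X_{i\alpha_{rs}(2n-i+1)}$, so that the two sets of signs cancel and no residual signs appear in \eqref{eq2.1}. Concretely, if $a$ denotes the number of entries of $\alpha_{rs}$ below $i$ and $c$ the number above $2n-i+1$, then $r=a+1$ and $s=k-c$, and one compares $(-1)^{r+s-1}$ with the sign of the permutation sorting $(i,\alpha_{rs},2n-i+1)$; verifying that these agree, up to the harmless global factor determined by the parity of $k$ which does not affect $\ker f$, is the technical heart of the argument. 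Once this is settled, the regrouping in the previous paragraph is purely formal.
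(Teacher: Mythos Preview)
Your approach is exactly the paper's: expand $f(w)$ by linearity, apply the definition \eqref{cont-map} to each $e_\alpha$, use \eqref{basesym700} to kill all but the conjugate pairs, and regroup by the surviving basis vector $e_{\alpha_{rs}}$. The paper's own argument is in fact shorter and less careful than yours --- it silently replaces $(-1)^{r+s-1}$ by $(-1)^{\beta_1+\beta_2-1}=(-1)^{2n}=1$ in one line --- whereas you correctly flag the sign bookkeeping as the crux.

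One remark on your hedge: the residual factor you anticipate is genuinely there. With $a$ entries of $\alpha_{rs}$ below $i$ and $c$ above $2n-i+1$ one has $r=a+1$, $s=k-c$, and sorting $(i,\alpha_{rs},2n-i+1)$ costs $(-1)^{(r-1)+(k-s)}$; combined with $(-1)^{r+s-1}$ this gives a global $(-1)^{k}$, independent of $r,s,i$. Calling it ``harmless for $\ker f$'' is correct but does not quite match the theorem, which asserts an exact formula for $f(w)$. You should carry the computation through and state the sign explicitly (so that for even $k$ the formula \eqref{eq2.1} holds on the nose and for odd $k$ it holds up to a global sign), rather than defer to the kernel.
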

\begin{proof}
Let $w=\sum_{\alpha\in I(k-2, 2n)} X_{\alpha}e_{\alpha} \in \wedge^kE$,
then for all  positive integers $\beta_1,\beta_2$
such that  $\beta_1 + \beta_2=2n+1$ we have
\begin{align*}
 f(w) & =\sum_{\alpha\in I(k-2, 2n)} X_{\alpha}f(e_{\alpha})\\
 & = \sum_{\alpha\in I(k,2n)}X_{\alpha} \bigg(\sum_{1\leq r<s\leq
k}\langle e_{\alpha_r},e_{\alpha_s}\rangle (-1)^{r+s-1}e_{\alpha_1}\wedge\cdots\wedge
\widehat{e}_{\alpha_r}\wedge\cdots\wedge \widehat{e}_{\alpha_s}\wedge\cdots\wedge
e_{\alpha_k}\bigg)\\
& = \sum_{1\leq r<s\leq k } \bigg(\sum_{\beta_1+\beta_2=2n+1}\langle e_{\beta_1}, e_{\beta_2}\rangle(-1)^{\beta_1+\beta_2-1}
X_{{\alpha}_{{rs}\beta_1\beta_2}}\bigg)e_{\alpha_{rs}}\\
& =\sum_{1\leq r<s\leq k}
\bigg(\sum_{i=1}^nX_{i\alpha_{rs}(2n-i+1)}\bigg)e_{\alpha_{rs}}
\end{align*}
where the next to last equality is because $\beta_1+\beta_2=2n+1$. 
\end{proof}
\begin{corollary}\label{Pi-alpha-rs}  Let $E$ symplectic vector space of dimension $2n$, over an arbitrary field
${\mathbb F}$ with  a  base as in \ref{basesym700} and a nondegenerate, skew-symmetric bilinear form $\langle\; ,\;\rangle$.  Then for $w=\sum X_{\alpha}e_{\alpha}\in \wedge^kE$ written in Pl\"ucker coordinates, we have that
\begin{equation}\label{eq2.1}
 w\in \ker f\Leftrightarrow\sum_{i=1}^kX_{i\alpha_{rs}(2n-i+1)}=0 \;\; \text{\rm for all} \;\; \alpha_{rs}\in I(k-2,2n)
\end{equation} and $2\leqslant k \leqslant n$.
\end{corollary}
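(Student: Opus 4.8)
The plan is to obtain the equivalence as an immediate consequence of Theorem \ref{Pi-alpha}, the substance being nothing more than the fact that a vector of $\wedge^{k-2}E$ is zero if and only if all of its coordinates in the standard monomial basis vanish.

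First I would unwind the definition: $w\in\ker f$ means precisely that $f(w)=0$ in $\wedge^{k-2}E$. Writing $\gamma=\alpha_{rs}$ for the running $(k-2)$-index, Theorem \ref{Pi-alpha} already supplies the explicit expansion
\[
f(w)=\sum_{\gamma\in I(k-2,2n)}\Bigl(\sum_{i=1}^{n}X_{i\gamma(2n-i+1)}\Bigr)e_{\gamma}.
\]
Since the family $\{e_{\gamma}:\gamma\in I(k-2,2n)\}$ is, by its very definition as the set of generators of $\wedge^{k-2}E$, a basis and hence linearly independent, the vanishing $f(w)=0$ is equivalent to the simultaneous vanishing of every coefficient. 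This yields exactly one linear equation $\sum_{i}X_{i\gamma(2n-i+1)}=0$ for each $\gamma\in I(k-2,2n)$, which is the asserted system, valid for all $2\leq k\leq n$.

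The only real work, and the step I expect to be the main obstacle, is to make sure that the right-hand side of Theorem \ref{Pi-alpha} has genuinely been regrouped by the basis index: that is, that after collecting all terms the total coefficient of a fixed $e_{\gamma}$ is precisely $\sum_{i=1}^{n}X_{i\gamma(2n-i+1)}$, with no hidden cancellation or multiplicity. To settle this I would fix $\gamma\in I(k-2,2n)$ and observe, using formula \ref{cont-map}, that the $k$-indices $\alpha\in I(k,2n)$ whose contraction contributes the summand $e_{\gamma}$ are exactly those of the form $\alpha=\gamma\cup\{i,2n-i+1\}$ with $i,2n-i+1\notin\mathrm{supp}\,\gamma$, since only a symplectic pair produces a nonzero pairing $\langle e_{\alpha_r},e_{\alpha_s}\rangle$ by \ref{basesym700}. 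It then remains to check that, once the inserted pair $\{i,2n-i+1\}$ is reordered into its correct increasing position inside $\alpha$, the accompanying signs $(-1)^{r+s-1}$ combine consistently so that these contributions add rather than cancel; this is the routine sign bookkeeping already carried out in passing to the last line of the proof of Theorem \ref{Pi-alpha}. With that verified, linear independence of $\{e_{\gamma}\}$ delivers the equivalence at once.
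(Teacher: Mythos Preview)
Your proposal is correct and follows the same route as the paper: invoke Theorem \ref{Pi-alpha} and then use linear independence of the monomial basis $\{e_{\gamma}:\gamma\in I(k-2,2n)\}$ to conclude that $f(w)=0$ is equivalent to the vanishing of every coefficient. The paper's proof is exactly this, stated in one line.

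Your final paragraph, however, is superfluous: the regrouping, the identification of which $\alpha$ contribute to a fixed $e_{\gamma}$, and the sign bookkeeping are precisely the content of Theorem \ref{Pi-alpha} itself, already carried out in its proof. Once you accept the formula of that theorem as given, the corollary is genuinely immediate and there is no remaining ``main obstacle'' to address; re-deriving those steps here duplicates work that belongs to the theorem, not to its corollary.
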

\begin{proof}
From the theorem \ref{Pi-alpha} we have
$$\sum_{1\leq r<s\leq k}
\big(\sum_{i=1}^kX_{i\alpha_{rs}(2n-i+1)}\big)e_{\alpha_{rs}}=0\quad \text{if and only if}\quad \sum_{i=1}^{k}X_{i\alpha_{rs}(2n-i+1)}=0$$ 
for all $\alpha_{rs} \in I(k-2, 2n)$.
\end{proof}
Now, for $\alpha_{st}\in I(k-2,2n)$ define the following homogeneous linear polynomials in ${\mathbb F}[X_{\alpha}]_{\alpha\in I(k, 2n)}$
\begin{equation}\label{eq3.1}    
\Pi_{\alpha_{st}}:=\sum_{i=1}^kc_{i,\alpha_{rs},2n-i+1}X_{i,\alpha_{rs},2n-i+1}
\end{equation}
with
\begin{equation}\label{coefPi}
c_{i,\alpha_{rs},2n-i+1}=\begin{cases}
1  &  \text{if $|supp\{i,\alpha_{rs},2n-i+1\}|=k$}, \\
0 & \text{otherwise},
\end{cases}
\end{equation}
We denote by
\begin{equation}\label{matrassIG}
B_{f}
\end{equation}
  the matrix of order $\binom{2n}{k-2}\times \binom{2n}{k}$ associated to the system of linear equations
\begin{equation}\label{matrassIG2}
\{ \Pi_{\alpha_{rs}}=0: \alpha_{rs}\in I(k-2, 2n)\}
\end{equation}
Let $E$ be a symplectic  vector space of dimension $2n$ then by the corollary \ref{Pi-alpha-rs}  it is easy to see that the kernel of the matrix $B_f$ is
\begin{equation}\label{2nkerB}
\ker B_f=\{ [X_{\alpha}]_{\alpha\in I(k, 2n)}\in {\mathbb X}^{C_k^{2n}-1}: \sum_{i=1}^{k}X_{i\alpha_{rs}(2n-i+1)}=0\}
\end{equation}
Also $\ker B_f$ is a vector space of dimension $C_{2n}^{k}-rank B_f$ and so then we have the following corollary
\begin{corollary}\label{tonal12101}
Let $E$ be a vector space of dimension $2n$ and let $f$ be the contraction map then $\ker B_f\simeq \ker f$
\end{corollary}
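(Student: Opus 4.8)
The plan is to realize the claimed isomorphism as the restriction to $\ker f$ of the tautological Pl\"ucker-coordinate identification between $\wedge^kE$ and the space of coefficient vectors carrying $B_f$, and then to check that under this identification $\ker f$ is cut out by exactly the homogeneous system \eqref{matrassIG2} whose solution space is $\ker B_f$. First I would fix the linear isomorphism
$$\Phi:\wedge^kE\longrightarrow {\mathbb F}^{\binom{2n}{k}},\qquad w=\sum_{\alpha\in I(k,2n)}X_\alpha e_\alpha\longmapsto (X_\alpha)_{\alpha\in I(k,2n)},$$
which is well defined and bijective precisely because $\{e_\alpha:\alpha\in I(k,2n)\}$ is a basis of $\wedge^kE$; this is the same coordinate space (of dimension $\binom{2n}{k}$) on which the matrix $B_f$ of \eqref{matrassIG} acts.

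Next I would invoke Corollary \ref{Pi-alpha-rs}, which says that $w\in\ker f$ if and only if $\sum_{i=1}^{k}X_{i\alpha_{rs}(2n-i+1)}=0$ for every $\alpha_{rs}\in I(k-2,2n)$. The one point needing care is that the index strings $i\,\alpha_{rs}\,(2n-i+1)$ occurring here are not yet admissible tuples of $I(k,2n)$: for some $i$ the index $i$ or $2n-i+1$ already appears among the entries of $\alpha_{rs}$. I would resolve this by the antisymmetry of the wedge product: any such term corresponds to a vanishing basis vector and therefore drops out, which is exactly the effect of the coefficient $c_{i,\alpha_{rs},2n-i+1}$ in \eqref{coefPi} being $0$ when $|\mathrm{supp}\{i,\alpha_{rs},2n-i+1\}|<k$; the surviving terms are then reordered into increasing form in $I(k,2n)$, the permutation sign being absorbed into the corresponding Pl\"ucker coordinate. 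After this reindexing the condition $\sum_i X_{i\alpha_{rs}(2n-i+1)}=0$ is literally the equation $\Pi_{\alpha_{rs}}=0$ of \eqref{eq3.1}.

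Consequently $\Phi(\ker f)$ equals the solution set of the homogeneous linear system \eqref{matrassIG2}, which by \eqref{2nkerB} is $\ker B_f$. Since $\Phi$ is a linear isomorphism, its restriction $\Phi|_{\ker f}:\ker f\to\ker B_f$ is a linear isomorphism, so $\ker f\simeq\ker B_f$; as a byproduct one reads off $\operatorname{rank}f=\operatorname{rank}B_f$ and $\dim\ker f=\binom{2n}{k}-\operatorname{rank}B_f$, consistent with the dimension count stated before \eqref{2nkerB}. I expect the only genuine obstacle to be the middle step: matching the formal strings $i\,\alpha_{rs}\,(2n-i+1)$ against honest Pl\"ucker variables $X_\alpha$ with $\alpha\in I(k,2n)$, that is, verifying that discarding the repeated-index terms through the coefficients $c$ and reordering the rest reproduces exactly the rows of $B_f$. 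Everything else is the tautology that two subspaces defined by one and the same homogeneous linear system inside a single coordinatized space must coincide.
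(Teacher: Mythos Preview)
Your proposal is correct and follows essentially the same route as the paper: both arguments restrict the Pl\"ucker-coordinate map $w=\sum X_\alpha e_\alpha\mapsto (X_\alpha)$ to $\ker f$ and invoke Corollary~\ref{Pi-alpha-rs} to identify its image with the solution set \eqref{2nkerB} of $B_f$. The paper concludes by checking surjectivity and then comparing dimensions, whereas you conclude by observing that a global linear isomorphism carries one subspace onto the other; these are equivalent, and your treatment of the repeated-index terms via the coefficients $c_{i,\alpha_{rs},2n-i+1}$ is in fact more explicit than the paper's.
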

\begin{proof}
If we restrict  \ref{embeddingpluck} to $\ker f$ we have
$$ \rho : \ker f\longrightarrow \ker B_f$$
$$w=\sum_{\alpha\in I(k-2, 2n)} X_{\alpha}e_{\alpha}  \longmapsto (X_{\alpha})_{\alpha_{rs}\in I(k-2, 2n)} $$
by \ref{2nkerB} it is well defined and  is a linear transformation, by the corollary \ref{Pi-alpha-rs} we have that $\rho$ is an epimorphism, also $\dim \ker f=  C_{k}^{2n}-rank B_f=\dim \ker B_f $. So $\rho$ is a isomorphism.
\end{proof}

 If $Q_{\alpha, \beta}\in {\mathbb F}[X_{\alpha}]_{\alpha\in I(k, 2n)}$ denote the
quadratic Pl\"ucker polynomials, as in \ref{eq1.1}, that define the Grassmann variety
$G(k,E)$, now in terms of 
$\Pi_{\alpha_{st}}$, we have the following
characterization of $IG(k,E)$, as the common zeros of the quadratic
polynomials $Q_{\alpha, \beta}$ and of
$\Pi_{\alpha_{st}}$, that is
\begin{equation}\label{linearsection001}
IG(k, E)= Z\langle Q_{\alpha^{\prime}, \beta^{\prime}},  \Pi_{\alpha_{rs}}\rangle\subset {\mathbb P}^{C^{2n}_k-1}
 \end{equation}
 where $\alpha^{\prime}\in I(n-1,2n), \; \beta^{\prime}\in I(n+1,2n)$ and $\alpha_{rs}\in I(k-2,2n)$. 
\begin{definition}\label{pluckerMatrix1001}
Let $E$  vector space of dimension $2n$, over an arbitrary field ${\mathbb F}$  then we call the matrix $B_f$ the Pl\"ucker matrix of the Isotropic Grassmannian $IG(k, E)$ for $2\leq k \leq n$.
\end{definition} 
\begin{theorem}\label{th211m}
\begin{description}
\item[a)] 
 Let $E$  symplectic vector space of dimension $2n$, over an arbitrary field ${\mathbb F}$. Then  
the Isotropic Grassmannian 
\begin{equation}\label{linearsection011}
IG(k, E)= Z\langle Q_{\alpha^{\prime}, \beta^{\prime}}\rangle\cap \ker B_f
 \end{equation}
 for $2\leq k \leq n$.
 \item[b)]  If $E$ and $E^{\prime}$ are symplectic vector space if $E$ symplectomorphic to $E^{\prime}$ then ambos spaces have the same matrix Pl\"ucker $B_f$
\end{description}
\end{theorem}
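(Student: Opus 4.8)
The plan is to read off part (a) directly from the characterizations of $IG(k,E)$ already established, and to deduce part (b) from the observation that $B_f$ is a combinatorial object attached to the pair $(n,k)$. For part (a), I would start from \ref{linearsection001}, which exhibits $IG(k,E)=Z\langle Q_{\alpha',\beta'},\Pi_{\alpha_{rs}}\rangle$ as the common zero locus of the quadratic Pl\"ucker polynomials together with the linear forms $\Pi_{\alpha_{rs}}$. Since the zero set of a combined system equals the intersection of the separate zero sets, this factors as $Z\langle Q_{\alpha',\beta'}\rangle\cap Z\langle\Pi_{\alpha_{rs}}\rangle$. The decisive step is to identify $Z\langle\Pi_{\alpha_{rs}}\rangle$ with $\ker B_f$: by \ref{2nkerB} a point $[X_\alpha]$ satisfies every equation $\Pi_{\alpha_{rs}}=0$ if and only if it lies in $\ker B_f$, so the two loci coincide and $IG(k,E)=Z\langle Q_{\alpha',\beta'}\rangle\cap\ker B_f$ follows at once.

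For part (b), I would note that the matrix $B_f$ is assembled entirely from the coefficients $c_{i,\alpha_{rs},2n-i+1}$ of \ref{coefPi}, whose values depend only on whether $|supp\{i,\alpha_{rs},2n-i+1\}|=k$ and on the fixed involution $i\leftrightarrow 2n-i+1$ supplied by the symplectic pairing \ref{basesym700}. Hence $B_f$ is determined by the integers $n$ and $k$ alone and records no further information about the particular space $E$. A symplectomorphism $\phi: E\to E'$ forces $\dim E=\dim E'=2n$, and both spaces admit a standard symplectic basis in the sense of \ref{basesym700}; in these bases the same pair $(n,k)$ produces literally the same array, so $B_f=B_{f'}$.

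The real content, and the step I expect to require the most care, is the implicit well-definedness of $B_f$. Since the matrix is read off in a chosen symplectic basis, one must verify that any two symplectic bases of a fixed space yield the same matrix, for otherwise the comparison across symplectomorphic spaces would be meaningless. This reduces to the naturality of the contraction map, $f'\circ\wedge^k\phi=\wedge^{k-2}\phi\circ f$, which guarantees that the induced isomorphism $\wedge^k\phi$ intertwines the two linear systems; combined with the fact that Theorem \ref{Pi-alpha} writes each $\Pi_{\alpha_{rs}}$ purely in terms of the involution $i\leftrightarrow 2n-i+1$, it ensures that the combinatorial array attached to $(n,k)$ is independent of the chosen basis, completing part (b).
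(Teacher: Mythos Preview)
Your proposal is correct and essentially matches the paper's argument. For part (a) the paper cites Proposition~\ref{tonal12} ($IG(k,E)=G(k,E)\cap\ker f$) together with Corollary~\ref{tonal12101} ($\ker B_f\simeq\ker f$), whereas you start from the already-assembled equation~\ref{linearsection001} and then identify $Z\langle\Pi_{\alpha_{rs}}\rangle$ with $\ker B_f$ via~\ref{2nkerB}; these are two ways of packaging the same chain of identifications, and yours is arguably the more direct reading. For part (b) the paper simply writes ``it is easy''; your observation that the entries $c_{i,\alpha_{rs},2n-i+1}$ of~\ref{coefPi} depend only on the combinatorics of $(n,k)$ and the fixed involution $i\leftrightarrow 2n-i+1$ is exactly the content behind that phrase, and your additional remark on basis-independence via naturality of the contraction is a welcome clarification rather than a deviation.
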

\begin{proof}
For $a)$ it follows from the proposition \ref{tonal12} and from the corollary \ref{tonal12101},
 $b)$ it is easy.
\end{proof}
\section{incidence configurations} 
We define the incidence configuration
\begin{equation}\label{incidekisot}
(I(k, 2n),S_{\alpha_{rs}})_{\alpha_{rs}\in I(k-2, 2n)}
\end{equation}
 where, $I(k, 2n)$ is a $C^{2n}_{k}$-set,
 \begin{align}\label{incidekisot2}
 S_{\alpha_{rs}}&=\{ \beta\in I(k, 2n): |supp \{ \alpha\}\cap supp \{ \beta\}\cap \Sigma_n|=k-1\}\\
                         &=\{ (\alpha_{rs}, P_i)\in I(k-2, 2n)\times \Sigma_n: c_{\alpha_{rs}P_i}=1\}
\end{align}
where  $c_{\alpha_{rs}P_i}$ is equal to \ref{coefPi},
 clearly the incidence matrix of \ref{incidekisot} is $B_{f}$ defined in  \ref{matrassIG} .\\
Let  $2\leq k \leq n$ even integers,   and $\Sigma_n=\{P_1, \ldots, P_n \}$ see 
\begin{equation}\label{Setconf1}
\big( C_{\frac{k}{2}}(\Sigma_n), \;S_{P_{\alpha}} \big)_{P_{\alpha}\in C_{(k-2)/2}(\Sigma_n)}
\end{equation} 
where $C_{\frac{k}{2}}(\Sigma_n)$ is a $C_{\frac{k}{2}}^{n}$-set and 
\begin{equation}\label{Setconf2}
S_{P_{\alpha}}=\{  P_{\beta}\in C_{\frac{k}{2}}(\Sigma_n): supp\{\alpha\}\subset supp\{ \beta\} \}
\end{equation}
a configuration of subsets of $C_{\frac{k}{2}}(\Sigma_n)$, such that $P_{\alpha}\in C_{\frac{k-2}{2}}(\Sigma_n)$.
%
\begin{lemma}\label{onesroow}
 For all $P_{\alpha}\in C_{\frac{k-2}{2}}(\Sigma_n)$ we have $|S_{P_{\alpha}}|=n-\frac{k-2}{2}$ 
\end{lemma}
\begin{proof}
If $P_{\beta}\in S_{P_{\alpha}}$ and $|supp\{\alpha\}|=\frac{k-2}{2}$ then 
\begin{align*}
|S_{P_{\alpha}}|&=|\{\beta\in I(k/2, n): supp\{\beta \}=supp\{\alpha \}\cup\{ i\}\; with \; 1\leq i \leq n\}|\\
        &=|\{i\in [n] : |supp \{\alpha \}\cup\{ i\} |=k/2\}|\\
        &=n-|supp\{\alpha \}|\\
        &=n-\frac{k-2}{2}      
\end{align*}
\end{proof}
\begin{lemma}\label{lem1210}
Let $P_\alpha$ and $P_{\overline{\alpha}}$ two different elements of $C_{\frac{k-2}{2}}(\Sigma_n)$ then 
$$S_{P_{\alpha}}\cap S_{P_{\overline{\alpha}}}\neq \emptyset \;\; if\; and\; only\; if\;\; |supp\{\alpha \}\cap supp \{ \overline{\alpha}\}|=\frac{k-4}{2}$$
\end{lemma}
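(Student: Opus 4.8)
The plan is to reduce the set-theoretic condition $S_{P_{\alpha}}\cap S_{P_{\overline{\alpha}}}\neq\emptyset$ to a purely numerical statement about $|supp\{\alpha\}\cap supp\{\overline{\alpha}\}|$. Write $A=supp\{\alpha\}$ and $\overline{A}=supp\{\overline{\alpha}\}$, so that $A,\overline{A}\subseteq[n]$ with $|A|=|\overline{A}|=\frac{k-2}{2}$, and note that by the convention in \ref{suppiguls} the hypothesis $P_{\alpha}\neq P_{\overline{\alpha}}$ is equivalent to $A\neq\overline{A}$.

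First I would unwind the definition of the configuration subsets. By the description of $S_{P_{\alpha}}$ obtained in the proof of Lemma \ref{onesroow}, an element $P_{\beta}$ lies in $S_{P_{\alpha}}$ precisely when $\beta\in I(k/2,n)$ and $A\subseteq supp\{\beta\}$, and likewise $P_{\beta}\in S_{P_{\overline{\alpha}}}$ iff $\overline{A}\subseteq supp\{\beta\}$. Hence $P_{\beta}\in S_{P_{\alpha}}\cap S_{P_{\overline{\alpha}}}$ exactly when $A\cup\overline{A}\subseteq supp\{\beta\}$ with $|supp\{\beta\}|=\frac{k}{2}$. Since such a $\beta$ exists (by completing $A\cup\overline{A}$ to a subset of $[n]$ of size $k/2$, which is possible because $k/2\leq n$) if and only if $|A\cup\overline{A}|\leq\frac{k}{2}$, I obtain the equivalence
$$S_{P_{\alpha}}\cap S_{P_{\overline{\alpha}}}\neq\emptyset\iff |A\cup\overline{A}|\leq\frac{k}{2}.$$

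The next step is the inclusion–exclusion computation $|A\cup\overline{A}|=|A|+|\overline{A}|-|A\cap\overline{A}|=(k-2)-|A\cap\overline{A}|$, which converts the right-hand condition into $|A\cap\overline{A}|\geq\frac{k-4}{2}$. To finish I would combine this with the elementary upper bound coming from $A\neq\overline{A}$: two distinct subsets of the common cardinality $\frac{k-2}{2}$ can share at most $\frac{k-2}{2}-1=\frac{k-4}{2}$ elements, so $|A\cap\overline{A}|\leq\frac{k-4}{2}$ always holds. The two inequalities together force $|A\cap\overline{A}|=\frac{k-4}{2}$, which is precisely the asserted equivalence.

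The argument is essentially a counting exercise, so I do not expect a genuine obstacle; the only point requiring care is making the existence step rigorous, namely that $|A\cup\overline{A}|\leq\frac{k}{2}$ really produces a witness $P_{\beta}$ in the intersection rather than being merely necessary. In the relevant case this is immediate, since $|A\cap\overline{A}|=\frac{k-4}{2}$ gives $|A\cup\overline{A}|=\frac{k}{2}$ exactly, so that $\beta=A\cup\overline{A}$ is itself (indeed the unique) element of $S_{P_{\alpha}}\cap S_{P_{\overline{\alpha}}}$ and no padding is needed.
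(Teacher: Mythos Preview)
Your proof is correct. The underlying idea matches the paper's: a common $P_{\beta}$ exists precisely when both $supp\{\alpha\}$ and $supp\{\overline{\alpha}\}$ fit inside a set of size $k/2$, and this forces the intersection to have size $\frac{k-4}{2}$. The paper argues each implication separately by tracking the two ``extra'' elements $N,M$ with $supp\{\alpha\}\cup\{N\}=supp\{\beta\}=supp\{\overline{\alpha}\}\cup\{M\}$ and computing $supp\{\beta\}\setminus\{N,M\}=supp\{\alpha\}\cap supp\{\overline{\alpha}\}$; you instead package both directions into the single equivalence $|A\cup\overline{A}|\le k/2$ and then use inclusion--exclusion together with the trivial upper bound $|A\cap\overline{A}|\le\frac{k-2}{2}-1$ coming from $A\neq\overline{A}$. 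Your route is a bit more economical and, as you note, already yields the uniqueness of the witness $P_{\beta}$ (the content of the subsequent Corollary~\ref{canes1}) for free, since $|A\cup\overline{A}|=k/2$ exactly.
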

\begin{proof}
$ \Rightarrow):$ Let $ P_{\beta}\in  S_{P_{\alpha}}\cap  S_{ P_{\overline{\alpha}}}$ then there are two different positive integers $N$ and $M$ different such that
$supp \{ \alpha \} \cup \{ N\}=supp \{ \beta\} =supp \{ \overline{\alpha}\} \cup \{ M\}$, also 
$supp \{ \alpha\} - \{ M\}=supp \{ \beta\}-\{N, M \} =
supp \{\overline{\alpha}\} - \{ N\}$, as a consequence  we have to $supp \{\beta\}-\{N, M \}= supp \{\alpha\} \cap supp \{ \overline{\alpha}\}$ so
$|supp\{P_{\alpha}\}\cap supp\{ P_{\overline{\alpha}} \} |=|supp \{ P_{\beta}\}|-|\{N, M \}|=\frac{k}{2}-2=\frac{k-4}{2}$\\
$\Leftarrow):$ Suppose $ |supp\{ \alpha\}\cap supp\{\overline{\alpha} \} |=\frac{k-4}{2}$  then there exist $M$, $N$ distinct positive integers such that $\{M\}= supp \{ \alpha\}- supp \{ \alpha\}\cap supp \{\overline{ \alpha}\}$ and  $\{N\}= supp \{\overline{ \alpha}\}- supp \{ \alpha\}\cap supp \{\overline{ \alpha}\}$ and so it exists
 $P_{\beta}\in C_{\frac{k}{2}}(\Sigma_n)$ such that 
$supp \{ \beta\}=(supp \{ \alpha\}\cap supp \{ \overline{\alpha}\})\cup \{ N, M\}$ then it is easy to see 
$P_{\beta}\in S_{ P_{\alpha}}\cap  S_{ P_{\overline{\alpha}}}$ and so $S_{P_{\alpha}}\cap S_{P_{\overline{\alpha}}}\neq \emptyset$.
\end{proof}
\begin{corollary}\label{canes1}
Let $P_{\alpha}$ and $P_{\overline{\alpha}}$ be two different of $C_{\frac{k-2}{2}}(\Sigma_n)$ then $|S_{P_{\alpha}}\cap S_{P_{\overline{\alpha}}}|\leq 1$
\end{corollary}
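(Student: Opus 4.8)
The plan is to reduce to the nonempty case and then show that any common member has a support that is forced, hence unique. First, if $S_{P_{\alpha}}\cap S_{P_{\overline{\alpha}}}=\emptyset$ then trivially $|S_{P_{\alpha}}\cap S_{P_{\overline{\alpha}}}|=0\leq 1$ and there is nothing to prove. So I would assume the intersection is nonempty, in which case Lemma \ref{lem1210} immediately yields the key quantitative input
$$|supp\{\alpha\}\cap supp\{\overline{\alpha}\}|=\frac{k-4}{2}.$$

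Next I would pin down the support of an arbitrary common element. Let $P_{\beta}\in S_{P_{\alpha}}\cap S_{P_{\overline{\alpha}}}$. By the defining condition \ref{Setconf2} of these subsets we have both $supp\{\alpha\}\subset supp\{\beta\}$ and $supp\{\overline{\alpha}\}\subset supp\{\beta\}$, hence $supp\{\alpha\}\cup supp\{\overline{\alpha}\}\subseteq supp\{\beta\}$. Since $P_{\alpha},P_{\overline{\alpha}}\in C_{\frac{k-2}{2}}(\Sigma_n)$ give $|supp\{\alpha\}|=|supp\{\overline{\alpha}\}|=\frac{k-2}{2}$, inclusion--exclusion together with the intersection size above computes the union exactly:
$$|supp\{\alpha\}\cup supp\{\overline{\alpha}\}|=\frac{k-2}{2}+\frac{k-2}{2}-\frac{k-4}{2}=\frac{k}{2}.$$

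Finally, because $\beta\in I(k/2,n)$ forces $|supp\{\beta\}|=\frac{k}{2}$, the inclusion of a set of cardinality $\frac{k}{2}$ inside another set of the same cardinality $\frac{k}{2}$ must be an equality, so $supp\{\beta\}=supp\{\alpha\}\cup supp\{\overline{\alpha}\}$. Thus the support of any common element is completely determined by $\alpha$ and $\overline{\alpha}$, and by the identification \ref{suppiguls} an element of $C_{\frac{k}{2}}(\Sigma_n)$ is determined by its support; hence there is at most one such $P_{\beta}$, which gives $|S_{P_{\alpha}}\cap S_{P_{\overline{\alpha}}}|\leq 1$.

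I do not expect a serious obstacle here: the corollary is essentially a counting refinement of Lemma \ref{lem1210}, and most of the work has already been done there. The only point demanding a little care is the inclusion--exclusion step and the elementary but decisive observation that a containment between two finite sets of equal cardinality is necessarily an equality, which is precisely what collapses the set of possible common elements down to a single determined support.
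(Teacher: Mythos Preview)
Your proof is correct and in fact cleaner than the paper's own argument. Both proofs rest on the same ingredients — Lemma~\ref{lem1210} to obtain $|supp\{\alpha\}\cap supp\{\overline{\alpha}\}|=\frac{k-4}{2}$, and the identification \eqref{suppiguls} to pass from equal supports to equal elements — and both ultimately show that the support of any common element $P_\beta$ is forced. The difference lies in the execution: the paper proceeds element-by-element, writing $supp\{\beta\}=supp\{\alpha\}\cup\{N\}=supp\{\overline{\alpha}\}\cup\{M\}$, deducing $supp\{\alpha\}=(supp\{\alpha\}\cap supp\{\overline{\alpha}\})\cup\{M\}$ and similarly for $\overline{\alpha}$, and then takes a second hypothetical element $P_{\beta'}$ with its own pair $\{N',M'\}$ and argues $\{N,M\}=\{N',M'\}$. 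You instead compute $|supp\{\alpha\}\cup supp\{\overline{\alpha}\}|=\frac{k}{2}$ directly via inclusion--exclusion and conclude $supp\{\beta\}=supp\{\alpha\}\cup supp\{\overline{\alpha}\}$ from the equal-cardinality containment, which dispenses with the auxiliary element-tracking and the second candidate $\beta'$ altogether. Your route is shorter and makes the forced-support mechanism more transparent; the paper's route is more hands-on but arrives at the same determined set $supp\{\beta\}$.
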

\begin{proof}
Let $ P_{\beta}\in S_{ P_{\alpha}}\cap  S_{P_{\overline{\alpha}}}$     
then there are two positive integers $N$ and $M$ such that
$supp \{ \alpha \} \cup \{ N\}=supp \{ \beta\} =supp \{ \overline{\alpha}\} \cup \{ M\}$, also 
$supp \{ \alpha\} - \{ M\}=supp \{ \beta\}-\{N, M \} =
supp \{\overline{\alpha}\} - \{ N\}$, as a consequence we have to $supp \{\beta\}= (supp \{\alpha\} \cap supp \{ \overline{\alpha}\})\cup \{N, M\} $  clearly 
$(supp \{\alpha \}\cap supp \{ \overline{\alpha}\})\cup \{M\}\subset supp \{\alpha \}$
then for  lemma \ref{lem1210} we have 
$ |(supp\{  \alpha\}\cap supp\{ \overline{\alpha} \})\cup \{M\} |=\frac{k-4}{2}+1=\frac{k-2}{2}$ so  we have to $supp \{\alpha \}=(supp \{\alpha \}\cap supp \{ \overline{\alpha}\})\cup\{M \}$, analogously we have  to $supp \{\overline{\alpha} \}=(supp \{\alpha \}\cap supp \{ \overline{\alpha}\})\cup\{N\}$. Suppose there is another  $P_{\beta^{\prime}}\in S_{P_{\alpha}}\cap  S_{ P_{\overline{\alpha}}}$ then there exist $M^{\prime}$ y $N^{\prime}$ distinct positive integers such that  
$supp\{\beta^{\prime}\}=supp \{\alpha \}\cap supp \{ \overline{\alpha}\}\cup \{N^{\prime}, M^{\prime}\}$, as 
$supp\{\alpha \}\subseteq supp\{ \beta^{\prime}\}$ then $(supp \{\alpha\}\cap supp \{\overline{\alpha}\})\cup \{ M \} \subseteq (supp \{\alpha\}\cap supp \{\overline{\alpha}\})\cup \{  N^{\prime}, M^{\prime} \} $ then $M\in \{ N^{\prime}, M^{\prime}\}$. Analogamente $supp\{\overline{\alpha} \}\subseteq supp\{ \beta^{\prime}\}$ and so we have to $N\in \{ N^{\prime}, M^{\prime}\}$ so $\{N, M\}=\{N^{\prime}, M^{\prime}\}$ then  $supp\{\beta \}=supp\{\beta^{\prime}\}$ 
and  so by \ref{suppiguls} $ |S_{P_{\alpha}}\cap  S_{ P_{\overline{\alpha}}}|\leq 1$
\end{proof}
\begin{corollary}\label{canes2}
If $S_{P_{\alpha}} = S_{P_{\overline{\alpha}}}$ then $P_{\alpha}=P_{\overline{\alpha}}$
\end{corollary}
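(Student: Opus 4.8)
The plan is to prove the statement by contradiction, exploiting directly the two results just established, namely Lemma~\ref{onesroow} and Corollary~\ref{canes1}. Suppose $S_{P_{\alpha}}=S_{P_{\overline{\alpha}}}$ and assume, for contradiction, that $P_{\alpha}\neq P_{\overline{\alpha}}$.

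First I would note that if the two subsets coincide then their intersection is the whole set, so $S_{P_{\alpha}}\cap S_{P_{\overline{\alpha}}}=S_{P_{\alpha}}$ and hence $|S_{P_{\alpha}}\cap S_{P_{\overline{\alpha}}}|=|S_{P_{\alpha}}|$. Next, Lemma~\ref{onesroow} evaluates this cardinality as $|S_{P_{\alpha}}|=n-\frac{k-2}{2}$. I would then check that this number is at least $2$: the inequality $n-\frac{k-2}{2}\geq 2$ is equivalent to $k\leq 2n-2$, which is immediate from the standing hypothesis $2\leq k\leq n$ together with $n\geq 2$. Thus $|S_{P_{\alpha}}\cap S_{P_{\overline{\alpha}}}|\geq 2$. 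On the other hand, since we are assuming $P_{\alpha}\neq P_{\overline{\alpha}}$, Corollary~\ref{canes1} applies verbatim and yields $|S_{P_{\alpha}}\cap S_{P_{\overline{\alpha}}}|\leq 1$. These two bounds are incompatible, so the assumption $P_{\alpha}\neq P_{\overline{\alpha}}$ is untenable; therefore $P_{\alpha}=P_{\overline{\alpha}}$, where equality of the $P$'s is read through the support convention \ref{suppiguls}.

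The argument has essentially no obstacle, as all the structural content is already packaged in the preceding lemma and corollary; the only point needing attention is the elementary size bound $|S_{P_{\alpha}}|\geq 2$, which is exactly where the restriction $k\leq n$ is used. Should one prefer an argument that is self-contained and does not cite Corollary~\ref{canes1}, I would instead recover $\mathrm{supp}\{\alpha\}$ intrinsically from $S_{P_{\alpha}}$: every $P_{\beta}\in S_{P_{\alpha}}$ satisfies $\mathrm{supp}\{\alpha\}\subset\mathrm{supp}\{\beta\}$, while for each index $i\notin\mathrm{supp}\{\alpha\}$ the element with $\mathrm{supp}\{\beta\}=\mathrm{supp}\{\alpha\}\cup\{i\}$ lies in $S_{P_{\alpha}}$ (as in the proof of Lemma~\ref{onesroow}); since there are at least two such indices $i$, the intersection $\bigcap_{P_{\beta}\in S_{P_{\alpha}}}\mathrm{supp}\{\beta\}$ equals exactly $\mathrm{supp}\{\alpha\}$. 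Applying this identity to both sides of $S_{P_{\alpha}}=S_{P_{\overline{\alpha}}}$ gives $\mathrm{supp}\{\alpha\}=\mathrm{supp}\{\overline{\alpha}\}$, and hence $P_{\alpha}=P_{\overline{\alpha}}$ once more.
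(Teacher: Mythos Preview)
Your proof is correct and follows essentially the same route as the paper: assume $P_{\alpha}\neq P_{\overline{\alpha}}$, invoke Corollary~\ref{canes1} to get $|S_{P_{\alpha}}\cap S_{P_{\overline{\alpha}}}|\leq 1$, and contradict this with Lemma~\ref{onesroow} together with the elementary check that $n-\frac{k-2}{2}\geq 2$ under $2\leq k\leq n$. Your added alternative argument, recovering $\mathrm{supp}\{\alpha\}$ as $\bigcap_{P_{\beta}\in S_{P_{\alpha}}}\mathrm{supp}\{\beta\}$, is a pleasant bonus not present in the paper.
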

\begin{proof}
Suppose $S_{ P_{\alpha}} = S_{P_{\overline{\alpha}}}$ and  
$ P_{\alpha}\neq  P_{\overline{\alpha}}$ then 
 by corollary 
\ref{canes1} we have to $ |S_{ P_{\alpha}}\cap  S_{ P_{\overline{\alpha}}}|\leq 1$ however this is not possible since by lemma \ref{onesroow}, $ |S_{ P_{\alpha}}\cap  S_{ P_{\overline{\alpha}}}|=|S_{ P_{\alpha}}|=n-\frac{k-2}{2}$  
but $n\geq k \geq 2$ and so $\frac{2(n+1)-k}{2}\geq 2$ which implies that $P_{\alpha}=P_{\overline{\alpha}}$
\end{proof}
%
%
\begin{lemma}\label{unoscolumn}
Each $S_{P_{\alpha}}$ has nonempty interseccion  with exactly $k/2$ of the other subsets.
\end{lemma}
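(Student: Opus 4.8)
The plan is to convert the statement into a purely combinatorial count and then evaluate it using Lemma \ref{lem1210}. Fix an index $P_{\alpha}\in C_{\frac{k-2}{2}}(\Sigma_n)$ and abbreviate $A:=supp\{\alpha\}\subseteq[n]$, so that $|A|=\frac{k-2}{2}$. By Corollary \ref{canes2} distinct indices produce distinct subsets, hence ``the other subsets'' are exactly the $S_{P_{\overline{\alpha}}}$ with $P_{\overline{\alpha}}\neq P_{\alpha}$, and the task is to count how many of them meet $S_{P_{\alpha}}$.

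First I would invoke Lemma \ref{lem1210}: for $P_{\overline{\alpha}}\neq P_{\alpha}$ one has $S_{P_{\alpha}}\cap S_{P_{\overline{\alpha}}}\neq\emptyset$ if and only if $|A\cap supp\{\overline{\alpha}\}|=\frac{k-4}{2}$. Since both $A$ and $supp\{\overline{\alpha}\}$ have cardinality $\frac{k-2}{2}$, this intersection condition is equivalent to saying that $supp\{\overline{\alpha}\}$ is obtained from $A$ by deleting a single element of $A$ and adjoining a single element of $[n]\setminus A$. Thus the neighbouring subsets $S_{P_{\overline{\alpha}}}$ are in bijection with the one-element exchanges of $A$, and by the convention \ref{suppiguls} different exchanges yield genuinely different indices, so there is no over-counting at this level.

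The second step is to enumerate these exchanges and to cross-check the result against the incidence data already assembled. A direct count records $\frac{k-2}{2}$ choices for the element of $A$ to be removed and $n-\frac{k-2}{2}$ choices for the element of $[n]\setminus A$ to be inserted. As an independent consistency check I would recompute the same quantity from the matrix: each point $P_{\beta}\in S_{P_{\alpha}}$ lies in $\frac{k}{2}$ of the subsets, hence in $\frac{k}{2}-1$ subsets other than $S_{P_{\alpha}}$, while Corollary \ref{canes1} guarantees that two distinct subsets meet in at most one point, so every neighbour of $S_{P_{\alpha}}$ is registered through exactly one of its $|S_{P_{\alpha}}|=n-\frac{k-2}{2}$ points (Lemma \ref{onesroow}). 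Both routes are forced to return the same number of neighbouring subsets.

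The conceptual content is therefore entirely carried by Lemmas \ref{lem1210} and \ref{canes1} together with the column degree $\frac{k}{2}$; the main obstacle is purely the bookkeeping. Concretely, the delicate point is to confirm that no $S_{P_{\overline{\alpha}}}$ is double-counted (settled by Corollary \ref{canes1}) and then to carry out the clean evaluation of the exchange count and reconcile it with the figure $\frac{k}{2}$ asserted in the statement; this reconciliation, rather than any hard structural fact, is where I expect the care to be needed.
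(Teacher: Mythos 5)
Your proposal never actually proves the asserted equality, and the step you defer (``reconcile the exchange count with $k/2$'') cannot be closed, because under your literal reading the statement is numerically false. Your use of Lemma \ref{lem1210} is correct: the subsets meeting $S_{P_{\alpha}}$ correspond to one-element exchanges in $A=supp\{\alpha\}$, and your two counts agree with each other, giving
\[
\frac{k-2}{2}\Bigl(n-\frac{k-2}{2}\Bigr)\;=\;\Bigl(n-\frac{k-2}{2}\Bigr)\Bigl(\frac{k}{2}-1\Bigr)
\]
neighbouring subsets. But this quantity is not $k/2$ for any admissible $2\le k\le n$: for instance $n=k=4$ gives $3$ neighbours (here $S_{P_i}\cap S_{P_j}=\{P_{(i,j)}\}$, so $S_{P_1}$ meets $S_{P_2}$, $S_{P_3}$ and $S_{P_4}$), while $k/2=2$. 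What the paper actually proves --- and what the lemma is used for later, in Proposition \ref{princmatrx}(b), to obtain the column weights of the incidence matrix --- is the point-degree statement: for each fixed $P_{\beta}\in C_{\frac{k}{2}}(\Sigma_n)$, the number of subsets $S_{P_{\alpha}}$ containing $P_{\beta}$ equals $|C_{\frac{k-2}{2}}(supp\{\beta\})|=C^{k/2}_{\frac{k-2}{2}}=k/2$, a one-line computation. The lemma's wording is misleading (the same wording appears in item III of the introduction's ``bizarre version''), but its intended content is this column count, not the neighbour count you set out to compute.

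There is also a circularity in your ``consistency check'': you import the fact that each point lies in $k/2$ of the subsets as known, yet that is precisely the content this lemma is meant to establish --- nothing earlier in the paper provides it. Ironically, had you proved that imported fact directly (the subsets $S_{P_{\alpha}}$ containing a given $P_{\beta}$ are indexed by the $\frac{k-2}{2}$-element subsets of the $\frac{k}{2}$-set $supp\{\beta\}$, which is the paper's entire proof), you would have been done; instead you used it as an auxiliary input to a computation aimed at the wrong target, and left the decisive reconciliation as an acknowledged open step. As it stands, the proposal is a correct derivation of a different and incompatible quantity, not a proof of the lemma.
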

\begin{proof}
Clearly  $P_{\beta}\in S_{ P_{\alpha}}$ 
if and only if  $\alpha\in C_{\frac{k-2}{2}}\{supp\{ \beta\} \}$, so the number subsets $S_{ P_{\alpha}}$ that contain  $ P_{\beta}$ is equal to $$|C_{(\frac{k-2}{2})}\{supp\{ \beta\}\}|=C_{\frac{k-2}{2}}^{k/2}=k/2$$ 
\end{proof}
\begin{proposition}\label{princmatrx}
Let $2\leq k \leq n$ even positive integer  then the incidence matrix of \ref{Setconf1}, satisfies
\begin{description}
\item[a)] has $ n-\frac{k-2}{2}$-ones in each row
\item[b)]  has $k/2$-ones in each column
\item[c)]  every two lines have at most one $1$ in common
\item[d)]   is sparse matrix
\end{description}
\end{proposition}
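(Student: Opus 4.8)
The plan is to obtain all four items by reading them off directly from the lemmas already proved for the configuration \ref{Setconf1}. The incidence matrix ${\EuScript A}$ of \ref{Setconf1} has its rows indexed by $P_\alpha\in C_{(k-2)/2}(\Sigma_n)$ and its columns indexed by $P_\beta\in C_{k/2}(\Sigma_n)$, with the entry in position $(P_\alpha,P_\beta)$ equal to $1$ precisely when $P_\beta\in S_{P_\alpha}$, i.e. when $supp\{\alpha\}\subset supp\{\beta\}$. With this dictionary between matrix entries and the set system, each of (a)--(c) becomes a one-line translation of an earlier result.

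For item (a), I would note that the number of ones in the row indexed by $P_\alpha$ is by definition $|S_{P_\alpha}|$, and Lemma \ref{onesroow} gives $|S_{P_\alpha}|=n-\frac{k-2}{2}$ for every $P_\alpha$; hence every row has the same weight $n-\frac{k-2}{2}$. For item (b), the number of ones in the column indexed by $P_\beta$ equals the number of rows $P_\alpha$ with $P_\beta\in S_{P_\alpha}$, that is, the number of $\tfrac{k-2}{2}$-subsets of the $\tfrac{k}{2}$-set $supp\{\beta\}$; this is exactly the count performed in the proof of Lemma \ref{unoscolumn}, namely $\binom{k/2}{(k-2)/2}=k/2$, so every column has weight $k/2$. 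Together these exhibit ${\EuScript A}$ as a $\big(n-\tfrac{k-2}{2},\,\tfrac{k}{2}\big)$-matrix. For item (c), two distinct rows $P_\alpha,P_{\overline\alpha}$ share a $1$ in column $P_\beta$ exactly when $P_\beta\in S_{P_\alpha}\cap S_{P_{\overline\alpha}}$, so the number of positions in which both rows carry a $1$ is $|S_{P_\alpha}\cap S_{P_{\overline\alpha}}|$, which Corollary \ref{canes1} bounds by $1$; this is precisely the assertion.

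For item (d) I would argue by comparing the constant row weight from (a) with the number of columns. The matrix has $\binom{n}{(k-2)/2}$ rows and $\binom{n}{k/2}$ columns, and by (a) each row contains only $n-\frac{k-2}{2}$ ones among its $\binom{n}{k/2}$ entries, so the fraction of nonzero entries is $\big(n-\tfrac{k-2}{2}\big)/\binom{n}{k/2}$. For $k\ge 4$ this tends to $0$ as $n$ grows, since the denominator is a polynomial of degree $k/2\ge 2$ in $n$ while the numerator is linear; thus the overwhelming majority of entries vanish and ${\EuScript A}$ is sparse.

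I expect item (d) to be the only step requiring genuine care, since it is the one not reducible to a single prior lemma and since ``sparse'' is given only an informal meaning in the preliminaries; the substance of the proof is therefore to replace that informal notion by the explicit density estimate above. The remaining items (a)--(c) are immediate restatements of Lemmas \ref{onesroow} and \ref{unoscolumn} and Corollary \ref{canes1}, respectively.
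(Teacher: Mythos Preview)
Your proof is correct and follows essentially the same approach as the paper: items (a), (b), (c) are read off from Lemma~\ref{onesroow}, Lemma~\ref{unoscolumn}, and Corollary~\ref{canes1} exactly as you do, and item (d) is handled by the same density computation (ones per row divided by number of columns, equivalently ones per column divided by number of rows). The only cosmetic difference is that the paper lets $k\to\infty$ to make the density vanish, whereas you fix $k\ge 4$ and let $n\to\infty$; since ``sparse'' is only informally defined, either asymptotic suffices.
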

\begin{proof}
For $a)$ the $1's$ in each row are displayed by the elements in the subset $S_{P_{\alpha}}$ so by lemma \ref{onesroow} each row has exactly $ n-\frac{k-2}{2}$ ones in each row.\\
For $b)$ the $1's$ in each column  are display the occurrences of the elements of $S_{P_{\alpha}}$ among the subsets, this follows from the lemma \ref{unoscolumn}.\\
$c)$ follows directly from corollary \ref{canes1}.\\
$d)$ The density of ones in the matrix and is given by  $\lfloor (2(n+1)-k)/2\rfloor\times C^k_{\frac{k-2}{2}}=(k/2)\times C^k_{\frac{k}{2}}$
 $$\frac{ (2(n+1)-k)/2}{ C^k_{k/2}}=\frac{k/2}{ C^k_{\frac{k-2}{2}}}$$
which approaches zero as $k$ approaches infinity .
\end{proof}

Let $2\leq k \leq n$ with $k$ even integer and $n$ arbitrary integer consider the incidence configuration \ref{Setconf1}
we denote the $C_{\frac{k-2}{2}}^n\times C_{\frac{k}{2}}^n$-incidence matrix
\begin{equation}\label{Lmatrix}
{\EuScript A}_{n-\frac{k-2}{2} }^{ k/2}
\end{equation}
If in  \ref{Lmatrix} we make $k=n$  then we denote by 
\begin{equation}\label{Lmatrix2}
{\EuScript L}_{r}={\EuScript A}_{ \frac{n+2}{2} }^{ n/2}
\end{equation}
where $r=\frac{n+2}{2}$ so the order is $C_{\frac{n-2}{2}}^n\times C_{\frac{n}{2}}^n$-incidence matrix
 Let $ n$ and $k$ even integers such that $2\leq k \leq n$, $r:=\frac{n+2}{2}$,  $1\leq \ell \leq r-1$ 
and   $1\leq a_1<a_2<\cdots<a_{2\ell}\leq 2n$ such that $a_i+a_j\neq 2n+1$  then we define 
$\Sigma_{a_1,\ldots,a_{2\ell}}=\Sigma_n-\{P_{a_1}, \dots, P_{a_{2\ell}}\}$ as in \ref{setSigma0011}.
We define an cartesian incidence configuration as in \ref{confcart}
  \begin{equation}\label{subsetconf12}
 (a_1,\ldots, a_{2\ell})\times  \bigg( C_{\frac{k-2\ell}{2}}(\Sigma_{a_1,\ldots ,a_{2\ell}}),  \; S_{(a_1,\ldots, a_{2\ell}, P_{\alpha})}
  \bigg)_{P_{\alpha}\in C_{\frac{k-2(\ell+1)}{2}}(\Sigma_{a_1,\ldots,a_{2\ell}})}
  \end{equation}
 where 
  \begin{equation*}\label{subsetconf21}
 \{(a_1,\ldots, a_{2\ell})\}\times C_{\frac{k-2\ell}{2}}(\Sigma_{a_1,\ldots ,a_{2\ell}}) 
\end{equation*}
is an $C^{n-2\ell}_{\frac{k-2\ell}{2}}$-set and the subsets are
\begin{equation}\label{subsetconf212}
  S_{ (a_1,\ldots, a_{2\ell}, P_{\alpha})}:=\{ (a_1,\ldots, a_{2\ell}, P_{\beta}) :
 supp\{\alpha\}\subset supp\{\beta\} \}
 \end{equation} 
 for all $P_{\alpha}\in C_{\frac{k-2(\ell+1)}{2}}(\Sigma_{a_1,\ldots,a_{2\ell}})$.
\begin{lemma}\label{isomconf1}
Let $n$ even integer, $k$ integer such that $4\leq k \leq n$, $r:=\frac{n+2}{2}$,  $1\leq \ell \leq r-1$ 
and   $1\leq a_1<a_2<\cdots<a_{2\ell}\leq 2n$ such that $a_i+a_j\neq 2n+1$   then the $C_{\frac{k-2(\ell+1)}{2}}^{n-2\ell}\times C_{\frac{k-2\ell}{2}}^{n-2\ell}$ incidence matrix of \ref{subsetconf12} is  ${\EuScript A}_{n-\frac{k+2(\ell-1)}{2}}^{(k-2\ell)/2}$. 
\end{lemma}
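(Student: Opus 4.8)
The plan is to strip the cartesian prefix by Lemma \ref{incconfisomr}, relabel the point set $\Sigma_{a_1,\ldots,a_{2\ell}}$ so that what remains is an instance of the standard configuration \ref{Setconf1}, and then read off the matrix after a one-line arithmetic check. First I would regard the tuple $(a_1,\ldots,a_{2\ell})$ as a single label $j$, so that \ref{subsetconf12} is precisely $j\times(C_{\frac{k-2\ell}{2}}(\Sigma_{a_1,\ldots,a_{2\ell}}),\,S_{P_\alpha})_{P_\alpha}$ in the sense of \ref{confcart}. By Lemma \ref{incconfisomr} this cartesian configuration is isomorphic to, and has the same incidence matrix as, the base configuration
\[
\bigg( C_{\frac{k-2\ell}{2}}(\Sigma_{a_1,\ldots,a_{2\ell}}),\; S_{P_\alpha}\bigg)_{P_\alpha\in C_{\frac{k-2(\ell+1)}{2}}(\Sigma_{a_1,\ldots,a_{2\ell}})},
\]
where $S_{P_\alpha}=\{P_\beta: supp\{\alpha\}\subset supp\{\beta\}\}$ is exactly the rule of \ref{Setconf2} read off from \ref{subsetconf212}. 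Thus it suffices to identify the incidence matrix of this base configuration.

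Next I would invoke the hypothesis $a_i+a_j\neq 2n+1$, which (as recorded after \ref{setSigma0011}) forces the removed pairs to be distinct and hence $|\Sigma_{a_1,\ldots,a_{2\ell}}|=n-2\ell$. Fixing any bijection $\phi\colon \Sigma_{a_1,\ldots,a_{2\ell}}\to\Sigma_{n-2\ell}=\{P_1,\ldots,P_{n-2\ell}\}$, I would observe that both the sets $C_s(\cdot)$ of \ref{setcomb1} and the incidence rule defining $S_{P_\alpha}$ depend only on the support-containment relation $supp\{\alpha\}\subset supp\{\beta\}$ among tuples; therefore $\phi$ transports the whole configuration verbatim and yields an isomorphism of incidence configurations in the sense of \ref{confinc}. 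Consequently the base configuration has the same incidence matrix as the standard configuration \ref{Setconf1} with $n$ replaced by $n-2\ell$ and $k$ replaced by $k-2\ell$ (both even, in the admissible range $2\le k-2\ell$). By the very definition \ref{Lmatrix}, that matrix is ${\EuScript A}_{(n-2\ell)-\frac{(k-2\ell)-2}{2}}^{(k-2\ell)/2}$, of order $C_{\frac{k-2(\ell+1)}{2}}^{n-2\ell}\times C_{\frac{k-2\ell}{2}}^{n-2\ell}$, which matches the order claimed in the statement.

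Finally a short computation closes the argument: the lower index simplifies as $(n-2\ell)-\frac{(k-2\ell)-2}{2}=n-\ell-\frac{k}{2}+1=n-\frac{k+2(\ell-1)}{2}$, so the incidence matrix of \ref{subsetconf12} is exactly ${\EuScript A}_{n-\frac{k+2(\ell-1)}{2}}^{(k-2\ell)/2}$. I do not expect a genuine obstacle beyond bookkeeping; the only point that truly needs care is checking that the relabeling $\phi$ preserves the configuration, which is guaranteed precisely because $S_{P_\alpha}$ is defined by the combinatorial relation $supp\{\alpha\}\subset supp\{\beta\}$ and so survives any bijection of the underlying point set $\Sigma$.
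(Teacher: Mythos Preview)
Your proposal is correct and follows essentially the same approach as the paper's proof: strip the cartesian prefix via Lemma~\ref{incconfisomr}, relabel $\Sigma_{a_1,\ldots,a_{2\ell}}$ as $\Sigma_{\overline n}$ with $\overline n=n-2\ell$ and $\overline k=k-2\ell$, identify the resulting configuration with the standard one \ref{Setconf1}, and simplify the subscript. You are slightly more explicit than the paper in justifying why the relabeling bijection preserves the incidence structure (namely because $S_{P_\alpha}$ is defined purely by support-containment), but the argument is otherwise the same.
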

\begin{proof}
As $|\Sigma_{a_1,\ldots,a_{2\ell}}|=n-2\ell$ if we make ${\overline n}=n-2\ell$ and ${\overline k}=k-2\ell$
then renumber the elements of $\Sigma_{a_1,\ldots,a_{2\ell}}= \{ P_1, \ldots, P_{\overline{n}}\}$ then by the lemma \ref{incconfisomr} the configuration
$$(a_1,\ldots, a_{2\ell})\times \bigg(  C_{\frac{k-2\ell}{2}}(\Sigma_{a_1,\ldots ,a_{2\ell}}), S_{ P_{\alpha}} \bigg)_{P_{\alpha}\in C_{\frac{k-2(\ell+1)}{2}}(\Sigma_ {a_1,\ldots,a_{2\ell}})}$$
 is isomorphic to the incidence configuration
  $$\big( C_{\frac{{\overline k}}{2}}(\Sigma_{{\overline n}}), S_{P_{\alpha}} \big)_{P_{\alpha}\in C_{\frac{{\overline k} -2}{2}}(\Sigma_{{\overline n}})}$$
  so also for the lemma  \ref{incconfisomr} both  have the same incidence matrix 
  \begin{align*}
  {\EuScript A}_{\overline{n}-\frac{\overline{k}-2}{2}}^{\frac{\overline{k}}{2}}&={\EuScript A}_{(n-2\ell)-\frac{(k-2\ell)-2}{2}}^{\frac{k-2\ell}{2}}\\
                                                                                &={\EuScript A}_{n-\frac{k+2(\ell-1)}{2}}^{\frac{k-2\ell}{2}}
  \end{align*}
  \end{proof}
 We denote the incidence matrix of \ref{subsetconf12} as
 \begin{equation}\label{matrizcomp}
 (a_1,\ldots, a_{2\ell}){\EuScript A}_{n-\frac{k-2(\ell+1)}{2}}^{(k-2\ell)/2}:={\EuScript A}_{n-\frac{k+2(\ell-1)}{2}}^{(k-2\ell)/2}
 \end{equation}
For $ n\geq 3$ odd  and $j \in \{1, \ldots, n \}$, 
we define an cartesian incidence configuration as in \ref{confcart}
\begin{equation} \label{subsetconf122}
j\times \bigg( C_{\frac{k-1}{2}}(\Sigma_n-\{P_j\}), S_{ P_{\alpha}}
 \bigg)_{P_{\alpha}\in C_{\frac{k-3}{2}}(\Sigma_n)}
\end{equation}
where
 \begin{equation*}
\{j\}\times C_{\frac{k-1}{2}}(\Sigma_n-\{P_j\}) 
\end{equation*} 
is a $C^n_{\frac{k}{2}}$-set and its subsets $S_{(j, P_{\alpha})}$ define by
  \begin{equation}\label{prodcruz0001}
  S_{(j, P_{\alpha})}=\{ (j, P_{\beta}): supp\{\alpha\}\subset supp\{ \beta\}\}\\                         
  \end{equation}
  with $P_{\alpha}\in C_{\frac{n-3}{2}}(\Sigma_n)$\\  
   For $ n\geq 5$ odd integer, $k$ integer such that $2 \leq k\leq n$, $r:=\frac{n+1}{2}$ and consider
  $1\leq a_1<a_2<\cdots<a_{2\ell+1}\leq 2n$ such that $a_i+a_j\neq 2n+1$   
  we define
 $\Sigma_{a_1,\ldots,a_{2\ell+1}}=\Sigma_n-\{P_{a_1}, \ldots, P_{a_{2\ell+1}} \} $ \\
 note that $|\Sigma_{a_1,\ldots,a_{2\ell+1}}|=n-(2\ell+1)$.
We define an cartesian incidence configuration as in \ref{confcart}
  \begin{equation}\label{subsetconf2}
   (a_1,\ldots, a_{2\ell+1})\times  \bigg ( C_{\frac{k-(2\ell+1)}{2}}(\Sigma_{a_1,\ldots ,a_{2\ell+1}}),  S_{ (a_1,\ldots, a_{2\ell+1}, P_{\alpha})} \bigg)_{P_{\alpha}\in C_{\frac{k-(2\ell+3)}{2}}(\Sigma_{a_1\ldots, a_{2\ell+1}})}
  \end{equation}
where 
  \begin{equation*}
 \{ (a_1,\ldots, a_{2\ell+1})\} \times C_{\frac{k-(2\ell+1)}{2}}(\Sigma_{a_1,\ldots ,a_{2\ell+1}}) \}
\end{equation*}
is $C^{n-(2\ell+1)}_{\frac{k-(2\ell+1)}{2}}$-set
where the  family of subsets is given by
\begin{equation}
 S_{(a_1,\ldots, a_{2\ell+1}, P_{\alpha})}=\{ (a_1,\ldots, a_{2\ell+1}, P_{\beta}):
 supp\{\alpha\}\subset supp\{\beta\ \}\\
 \end{equation} 
for all $P_{\alpha}\in C_{\frac{k-2(\ell+3)}{2}}(\Sigma_{a_1\ldots, a_{2\ell+1}})$ and 
$P_{\beta}\in C_{\frac{k-2(\ell+1)}{2}}(\Sigma_{a_1\ldots, a_{2\ell+1}})$
\begin{lemma}\label{isomconf2}
\begin{description}
\item[a])
The incidence matrix \ref{subsetconf122} is ${\EuScript A}_{(n-1)-\frac{k-3}{2}}^{ (k-1)/2}$ 
\item[ b)] 
The incidence matrix \ \ref{subsetconf2} is ${\EuScript A}^{((k-1)-2\ell)/2}_{[(n-1)-2\ell]-\frac{(k-3)-2\ell}{2}}
$  
\end{description}
\end{lemma}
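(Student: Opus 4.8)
The plan is to treat Lemma \ref{isomconf2} as the odd-index counterpart of Lemma \ref{isomconf1}: in each of the two configurations I would discard the deleted pairs, renumber the surviving elements of $\Sigma_n$, and then invoke Lemma \ref{incconfisomr} to identify the incidence matrix with a model matrix of the form \ref{Lmatrix}. The one preliminary observation I would record is that $k$ is forced to be odd here, since $C_{\frac{k-1}{2}}$ and $C_{\frac{k-(2\ell+1)}{2}}$ only make sense when $\frac{k-1}{2}$ and $\frac{k-(2\ell+1)}{2}$ are positive integers; combined with $n$ odd, this guarantees that the reduced parameters below are even, so that the model configuration \ref{Setconf1} is legitimately applicable.

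For part (a) I would set $\bar n := n-1$ and $\bar k := k-1$, both even. Since $\Sigma_n-\{P_j\}$ has exactly $n-1$ elements, relabeling $\Sigma_n-\{P_j\}=\{P_1,\ldots,P_{\bar n}\}$ turns $C_{\frac{k-1}{2}}(\Sigma_n-\{P_j\})$ into $C_{\frac{\bar k}{2}}(\Sigma_{\bar n})$ and the index family $C_{\frac{k-3}{2}}$ into $C_{\frac{\bar k-2}{2}}(\Sigma_{\bar n})$. Because the subsets $S_{P_\alpha}$ are defined purely by the containment $supp\{\alpha\}\subset supp\{\beta\}$, which the renumbering preserves, Lemma \ref{incconfisomr} shows that the cartesian configuration \ref{subsetconf122} is isomorphic to the model configuration $\big(C_{\frac{\bar k}{2}}(\Sigma_{\bar n}),S_{P_\alpha}\big)_{P_\alpha\in C_{\frac{\bar k-2}{2}}(\Sigma_{\bar n})}$ and carries the same incidence matrix ${\EuScript A}_{\bar n-\frac{\bar k-2}{2}}^{\bar k/2}$. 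Substituting $\bar n=n-1$, $\bar k=k-1$ yields ${\EuScript A}_{(n-1)-\frac{k-3}{2}}^{(k-1)/2}$, exactly the claimed matrix.

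Part (b) runs identically with $\bar n:=n-(2\ell+1)$ and $\bar k:=k-(2\ell+1)$, which are again even since $n,k$ are odd and $2\ell+1$ is odd; here $\Sigma_{a_1,\ldots,a_{2\ell+1}}$ has $n-(2\ell+1)$ elements, and after relabeling it as $\{P_1,\ldots,P_{\bar n}\}$ the same application of Lemma \ref{incconfisomr} identifies the matrix of \ref{subsetconf2} with ${\EuScript A}_{\bar n-\frac{\bar k-2}{2}}^{\bar k/2}$; the substitution gives superscript $\bar k/2=\frac{(k-1)-2\ell}{2}$ and subscript $\bar n-\frac{\bar k-2}{2}=[(n-1)-2\ell]-\frac{(k-3)-2\ell}{2}$, the stated matrix. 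Two bookkeeping points need care: one must extend Lemma \ref{incconfisomr} from a single label $j$ to a tuple label $(a_1,\ldots,a_{2\ell+1})$, which is immediate because the projection $\{(a_1,\ldots,a_{2\ell+1})\}\times S\to S$ is still a bijection; and one must confirm the parities above so that ${\EuScript A}_{\bar n-\frac{\bar k-2}{2}}^{\bar k/2}$ is defined. I expect the only real (though routine) obstacle to be checking that the renumbering of the surviving pairs creates and destroys no incidences, i.e. that it respects the defining containment relation for $S_{P_\alpha}$; this is precisely the content that Lemma \ref{incconfisomr} supplies.
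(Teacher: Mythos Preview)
Your proposal is correct and follows essentially the same route as the paper: in each part you substitute $\bar n$ and $\bar k$ for the surviving set size and reduced level, relabel the remaining pairs, invoke Lemma~\ref{incconfisomr} to pass to the model configuration~\ref{Setconf1}, and read off the incidence matrix ${\EuScript A}_{\bar n-\frac{\bar k-2}{2}}^{\bar k/2}$ before back-substituting. The paper's proof of part~(a) is slightly more economical in that it only introduces $m=k-1$ and does not bother relabeling $\Sigma_n-\{P_j\}$ explicitly, but the substance is identical; your extra remarks on parities and on extending Lemma~\ref{incconfisomr} to a tuple label are reasonable bookkeeping clarifications that the paper leaves implicit.
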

\begin{proof}
For the proof of $a)$ let $m=k-1$
$$j\times \bigg( C_{\frac{k-1}{2}}(\Sigma_n-\{P_j\}),\;  S_{P_{\alpha}}  \bigg)_{P_{\alpha}\in C_{\frac{k-3}{2}}(\Sigma_n-\{P_j \})}$$ is isomorphic to the incidence configuration
$$ \bigg( C_{\frac{m}{2}}(\Sigma_n-\{P_j\}),\; S_{P_{\alpha}} \bigg)_{P_{\alpha}\in C_{\frac{m-2}{2}}(\Sigma_n-\{P_j\})} $$ 
so its configuration matrix is given by 
$${\EuScript A}_{(n-1)-\frac{m-2}{2}}^{ m/2}={\EuScript A}_{(n-1)-\frac{k-3}{2}}^{ (k-1)/2}$$
For $b)$ clearly $|\Sigma_{a_1,\ldots, a_{2\ell+1}}|=n-(2\ell+1)$ if we do ${\overline n}=n-(2\ell+1)$, renaming the elements of $\Sigma_{a_1,\ldots, a_{2\ell+1}}=\{P_1, \ldots, P_{{\overline n}} \}$ 
and we do ${\overline k}=k-(2\ell + 1)$ then
$$ (a_1,\ldots, a_{2\ell+1})\times\bigg ( C_{\frac{k-(2\ell+1)}{2}}(\Sigma_{a_1,\ldots ,a_{2\ell+1}}), S_{P_{\alpha}} \bigg)_{P_{\alpha}\in C_{\frac{k-(2\ell+3)}{2}}(\Sigma_{a_1\ldots, a_{2\ell}})}$$
is isomorphic to the incidence configuration $$\bigg( C_{\frac{{\overline k}}{2}}(\Sigma_{{\overline n}}), S_{P_{\alpha}} \bigg)_{P_{\alpha}\in C_{\frac{\overline k}{2}}(\Sigma_{\overline n})} $$  so its incidence matrix is
\begin{align*}
{\EuScript A}^{\overline{k}/2}_{\overline{n}-\frac{\overline{k}-2}{2}}&={\EuScript A}^{(k-(2\ell+1))/2}_{n-(2\ell+1)-\frac{k-(2\ell+1)-2}{2}}\\
                                                                                                       &={\EuScript A}^{((k-1)-2\ell)/2}_{[(n-1)-2\ell]-\frac{(k-3)-2\ell}{2}}
\end{align*}
 \end{proof}
 For each $j\in\{1, \ldots, n \}$ we denote by 
  \begin{equation}\label{noddj}
  \{ j \}{\EuScript A}
  \end{equation}
  to the incidence matrix of  \ref{subsetconf122}.\\
Also for each $(a_1, \ldots, a_{\ell+1})\in C_{\frac{k-2(\ell+1)}{2}}(\Sigma_{a_1, \ldots, a_{\ell+1}})$ such that
$a_i+a_j\neq 2k+1$ we denote by 
\begin{equation}\label{nodd1}
(a_1, \ldots, a_{\ell+1}){\EuScript A}_{[(n-1)-2\ell]-\frac{(k-3)-2\ell}{2}}^{((k-1)-2\ell)/2}
\end{equation}
to the matrix  \ref{subsetconf2}.
\begin{lemma}\label{lem2.5} 
\begin{description}
\item[A) ] 
If $4\leq k \leq n$,  $r=\lfloor\frac{k+2}{2}\rfloor$, $1\leq \ell \leq r-2$  and $j\in \{1, \ldots, n \}$ then   
{ \begin{equation}\label{In2npartc11}
 C_{\lfloor\frac{k-2}{2}\rfloor}(\Sigma_{n})\cup\Big(\bigcup_{\ell=1}^{r-2}
\bigcup_{\substack{1\leq a_1<\cdots < a_{2\ell}\leq 2n\\ a_i+a_j\neq 2n+1}}(a_1,\ldots, a_{2\ell})\times C_{\lfloor \frac{k-2(\ell+1)}{2}\rfloor}(\Sigma_{a_1,\ldots ,a_{2\ell}})\Big).
\end{equation}}
is a partition of the set $I(k-2,2n)$.
\item[B)]
If $5\leq k \leq n$ and let $r=\lfloor\frac{k+1}{2} \rfloor$,   $1\leq \ell \leq r-2$, then   
{ \begin{equation}\label{In2npartc12}
\bigcup_{j=1}^{n}\{j\}\times C_{\lfloor\frac{k-2}{2}\rfloor}(\Sigma_n)\cup\Big(\bigcup_{\ell=1}^{r-2}
\bigcup_{\substack{1\leq a_1<\cdots < a_{2\ell+1}\leq 2k\\ a_i+a_j\neq 2k+1}}(a_1,\ldots, a_{2\ell+1})\times C_{\lfloor\frac{k-2(\ell+1)}{2}\rfloor}(\Sigma_{a_1,\ldots ,a_{2\ell+1}})\Big).
\end{equation}}
is a partition of the set $I(k-2,2n)$.
\end{description}
\end{lemma}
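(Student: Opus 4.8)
The plan is to prove both parts by the same device: exhibiting an explicit bijection between $I(k-2,2n)$ and the disjoint union on the right, obtained by splitting the support of each index according to its symplectically paired and unpaired entries. First I would attach to every $\gamma\in I(k-2,2n)$ the set of \emph{unpaired} entries $U(\gamma):=\{a\in supp\{\gamma\}: 2n+1-a\notin supp\{\gamma\}\}$, together with the collection of complete pairs $P_i$ with $\{i,2n+1-i\}\subseteq supp\{\gamma\}$. Since each entry of $supp\{\gamma\}$ is either unpaired or belongs to exactly one complete pair, we get $|supp\{\gamma\}|=|U(\gamma)|+2p=k-2$, where $p$ is the number of complete pairs; hence $|U(\gamma)|\equiv k\pmod 2$. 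Moreover, if $a,b\in U(\gamma)$ are distinct then $b\neq 2n+1-a$, so the elements of $U(\gamma)$ satisfy the condition $a_i+a_j\neq 2n+1$ that indexes the outer union, and the $p$ complete pairs lie in $\Sigma_n$ with the pairs $P_a$ ($a\in U(\gamma)$) deleted, i.e. in $\Sigma_{a_1,\dots}$. This is exactly the data of one element of one cartesian-product block.

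Next I would split according to parity. In part A ($k$ even) the quantity $|U(\gamma)|$ is even, say $|U(\gamma)|=2\ell$ with $p=\tfrac{k-2(\ell+1)}{2}$; the case $\ell=0$ (support consisting entirely of complete pairs) lands in the base block $C_{(k-2)/2}(\Sigma_n)$, while for $\ell\ge 1$ the index lands in the block attached to $(a_1,\dots,a_{2\ell})=U(\gamma)$ with pair-tuple $P_\beta\in C_{(k-2(\ell+1))/2}(\Sigma_{a_1,\dots,a_{2\ell}})$. In part B ($k$ odd) $|U(\gamma)|$ is odd, $|U(\gamma)|=2\ell+1$; the case $\ell=0$ gives a single unpaired entry $j$ and lands in $\{j\}\times C_{\lfloor(k-2)/2\rfloor}(\Sigma_n)$, while $\ell\ge 1$ lands in the $(a_1,\dots,a_{2\ell+1})$-block. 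In each case I would check $0\le\ell\le r-2$, the extreme value $\ell=r-2$ corresponding to $p=0$ (no complete pair), which is precisely what pins down the stated ranges of $\ell$ and shows that the displayed blocks already exhaust every admissible value of $|U(\gamma)|$.

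Conversely, any element of a block is a formal pair consisting of an unpaired tuple $(a_1,\dots)$ and a pair-tuple $P_\beta$; sending it to the increasing arrangement of $\{a_1,\dots\}\cup supp\{P_\beta\}$ yields a genuine element of $I(k-2,2n)$, and this assignment inverts the decomposition above. Disjointness of the blocks is immediate because $U(\gamma)$ is an invariant of $\gamma$: blocks with different unpaired index sets (in particular with different $\ell$) have images with different unpaired parts; within a fixed block the identification $\alpha=\beta\Leftrightarrow supp\{\alpha\}=supp\{\beta\}$ of \ref{suppiguls} makes the correspondence injective, and surjectivity is exactly the decomposition of the first step.

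I expect the only real bookkeeping, and hence the main obstacle, to be the arithmetic verifying that the two $\ell$-ranges together with the separately written base blocks cover precisely the possible values of $|U(\gamma)|$ (including the parity argument $|U(\gamma)|\equiv k\pmod 2$ that assigns $k$ even to part A and $k$ odd to part B), and reconciling the base block of part B with the fact that its pair-tuple must avoid $P_j$ for the map to remain injective, so that the relevant pairs are drawn from $\Sigma_n-\{P_j\}$ rather than all of $\Sigma_n$ as literally written.
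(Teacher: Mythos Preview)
Your approach is essentially the same as the paper's: both arguments decompose each $\gamma\in I(k-2,2n)$ according to its unpaired entries $U(\gamma)$ and its complete symplectic pairs, and place $\gamma$ in the block indexed by $U(\gamma)$; the paper phrases this as a three-case split on whether $supp\{\alpha_{rs}\}\cap\Sigma_n$ is all of the support, empty, or strictly intermediate, but the underlying decomposition is identical to yours. Your version is in fact more complete than the paper's, since you make the parity constraint $|U(\gamma)|\equiv k\pmod 2$ explicit, verify the range $0\le\ell\le r-2$, and argue disjointness via the invariant $U(\gamma)$ (the paper only checks the covering direction); your closing observation that in Part~B the base block must draw its pair-tuple from $\Sigma_n-\{P_j\}$ rather than $\Sigma_n$ is correct and matches how the paper actually uses this configuration elsewhere (cf.\ \eqref{subsetconf122}).
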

\begin{proof}
Let $k\geq 4$ and let $r=\lfloor\frac{k+2}{2}\rfloor$,  then   it is sufficient to show that $I(k-2, 2n)$ is contained in \ref{In2npartc11} $($ resp. If $k\geq 5$ and let $r=\lfloor\frac{k+1}{2}\rfloor$,  then  $I(k-2, 2n)$ is contained in \ref{In2npartc12} $)$. If $supp\{ \alpha_{rs}\}\subseteq \Sigma_n$ then it exists $P_{\theta}\in C_{\lfloor\frac{k-2}{2}\rfloor}(\Sigma_n)$ such that
$\alpha_{rs}=P_{\theta}\in C_{\lfloor\frac{k-2}{2}\rfloor}(\Sigma_n)$ ( resp. exists $P_{\theta}\in C_{\lfloor \frac{k-3}{2}\rfloor}(\Sigma_n)$ such that
$\alpha_{rs}=(j,P_{\theta})\in \{j \}\times C_{\lfloor\frac{k-3}{2}\rfloor}(\Sigma_n)$). 
If $supp\{ \alpha_{rs}\}\cap \Sigma_n= \emptyset$ then $\alpha_{rs}=(a_1, \ldots, a_{\ell-2})$ such that $a_i+a_j\neq 2k+1$  and so we have to $\alpha_{rs}\in C_0(\Sigma_{a_1, \ldots, a_{\ell-2}})$ ( resp. $\alpha_{rs}=(a_1, \ldots, a_{\ell-3})\in C_0(\Sigma_{a_1, \ldots, a_{\ell-3}})$ since $a_i+a_j\neq 2k+1$).   
If $supp\{ \alpha_{rs}\}\cap \Sigma_n\neq \emptyset$ then there are  $P_{\theta}\in C_{\lfloor\frac{n-2(k+1)}{2}\rfloor}(\Sigma_{a_1, \ldots, a_{2\ell}})$ 
and $1\leq a_1<\ldots, < a_{2\ell}\leq 2n$ such that $\alpha_{rs}=(a_1, \ldots, a_{2\ell}, P_{\theta})\in (a_1, \ldots, a_{2\ell})\times C_{\lfloor\frac{n-2k}{2}\rfloor}(\Sigma_{a_1, \ldots, a_{2\ell}})$ .( resp. exist $P_{\theta}\in C_{\lfloor\frac{(n-1)-2k}{2}\rfloor}(\Sigma_{a_1, \ldots, a_{2\ell+1}})$ and $1\leq a_1<\ldots, < a_{2\ell+1}\leq 2n$ such that $\alpha_{rs}=(a_1, \ldots, a_{2\ell+1}, P_{\theta})\in (a_1, \ldots, a_{2\ell+1})\times C_{\lfloor\frac{(n-1)-2k}{2}\rfloor}(\Sigma_{a_1, \ldots, a_{2\ell+1}})$ )
with which the demonstration ends.
\end{proof}
\begin{theorem}\label{sumdirect}
Let $E$ symplectic vector space of dimension $2n$, over an arbitrary field ${\mathbb F}$ with a nondegenerate, skew-symmetric bilinear form $\langle\; ,\;\rangle$, and let $B_f$ the P\"ucker matrix of Isotropic Grasmmannian $IG(k, E)$
\begin{description}
\item[A) ] 
 $k\geq 4$ even integer, $r=(k+2)/2$ and $1\leq \ell \leq r-2$ then\\
$B_{f}={\EuScript A}^{k/2}_{n-\frac{k-2}{2}}\oplus\bigg(\bigoplus_{\ell=1}^{r-2} \bigoplus_{\substack{1\leq a_1<\cdots<a_{2\ell}\leq 2n\\ a_i+a_j\neq 2n+1}}( a_1, \ldots, a_{2\ell}){\EuScript A}^{(k-2\ell)/2}_{n-\frac{k+2(\ell-1)}{2}}\bigg)$ \\
\item[B)]
$k\geq 5$ odd integer, $r=(k+1)/2$ and $1\leq \ell \leq r-2$ then\\
$B_{f}=({\EuScript A}^{\lfloor k/2 \rfloor}_{n-\lfloor \frac{k-2}{2}\rfloor})^n\oplus\bigg(\bigoplus_{\ell =1}^{r-2} \bigoplus( a_1, \ldots, a_{2\ell+1}){\EuScript A}^{((k-1)-2\ell)/2}_{[(n-1)-2\ell]-\frac{(k-3)-2\ell}{2}}\bigg)$ \\
\end{description}
where  $1\leq a_1<\cdots<a_{2\ell+1}\leq 2n$,  $a_i+a_j\neq 2n+1$ and $({\EuScript A}^{\lfloor k/2 \rfloor}_{n-\lfloor \frac{k-2}{2}\rfloor})^n:=\bigoplus_{j=1}^{n}\{j\}{\EuScript A}^{\lfloor k/2 \rfloor}_{n-\lfloor \frac{k-2}{2}\rfloor}$
\end{theorem}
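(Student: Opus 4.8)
The plan is to identify $B_f$ with the incidence matrix of a single configuration whose subset-index set is $I(k-2,2n)$, to split that index set by Lemma \ref{lem2.5}, and to convert the resulting partition into a block-diagonal decomposition using Corollary \ref{sumconfs}. By \ref{incidekisot}--\ref{incidekisot2} the matrix $B_f$ is precisely the incidence matrix of $(I(k,2n),S_{\alpha_{rs}})_{\alpha_{rs}\in I(k-2,2n)}$; thus, writing $\Omega=I(k-2,2n)$ for the set indexing the subsets (rows), with columns indexed by $I(k,2n)$, we have $B_f={\EuScript A}_{\Omega}$ in the notation of Lemma \ref{matrixsubconf001}.

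First I would apply Lemma \ref{lem2.5}: for $k\geq 4$ even the set $\Omega$ is the disjoint union \ref{In2npartc11}, and for $k\geq 5$ odd it is the disjoint union \ref{In2npartc12}. In either case this expresses $\Omega$ as a finite disjoint union $\Omega=\Omega_1\cup\cdots\cup\Omega_t$, where $\Omega_1$ is the leading block $C_{\lfloor\frac{k-2}{2}\rfloor}(\Sigma_n)$ (resp. $\bigcup_{j}\{j\}\times C_{\lfloor\frac{k-2}{2}\rfloor}(\Sigma_n)$) and the remaining blocks are the cartesian pieces indexed by the free tuples $(a_1,\ldots,a_{2\ell})$ with $a_i+a_j\neq 2n+1$.

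The \emph{decisive observation} is that incidence in \ref{incidekisot2} passes from $\alpha_{rs}\in I(k-2,2n)$ to $\beta\in I(k,2n)$ only by adjoining a symplectic pair $P_i=\{i,2n-i+1\}$, so the non-paired indices of $\beta$ agree with those of $\alpha_{rs}$. Hence every column incident to a row in the block labelled $(a_1,\ldots,a_{2\ell})$ has free part exactly $(a_1,\ldots,a_{2\ell})$, and distinct blocks of the partition are incident to disjoint sets of columns. Consequently the partition of $\Omega$ simultaneously partitions the columns, so iterating Corollary \ref{sumconfs} over the finite partition gives, up to permutation of rows and columns,
\begin{equation*}
B_f\;\sim\;\bigoplus_{i=1}^{t}{\EuScript A}_{\Omega_i}.
\end{equation*}

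It then remains to name the blocks. The leading block is the configuration \ref{Setconf1}, whose matrix is ${\EuScript A}^{k/2}_{n-\frac{k-2}{2}}$ by \ref{Lmatrix}; each cartesian block has the form \ref{subsetconf12}, whose matrix is $(a_1,\ldots,a_{2\ell}){\EuScript A}^{(k-2\ell)/2}_{n-\frac{k+2(\ell-1)}{2}}$ by Lemma \ref{isomconf1} and the notation \ref{matrizcomp}; grouping by $\ell$ and ranging over admissible tuples yields part A. For part B one uses Lemma \ref{isomconf2} instead of Lemma \ref{isomconf1}: the $j$-indexed blocks \ref{subsetconf122} each contribute the matrix of Lemma \ref{isomconf2}(a), whose $n$-fold direct sum is the abbreviation $({\EuScript A}^{\lfloor k/2\rfloor}_{n-\lfloor\frac{k-2}{2}\rfloor})^n$ of the theorem, and the odd cartesian blocks \ref{subsetconf2} contribute ${\EuScript A}^{((k-1)-2\ell)/2}_{[(n-1)-2\ell]-\frac{(k-3)-2\ell}{2}}$ of Lemma \ref{isomconf2}(b). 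The hard part is exactly the decisive observation: Corollary \ref{sumconfs} produces a genuine direct sum rather than a vertical stack of row-blocks only because the blocks occupy disjoint columns, so the heart of the argument is checking that the free-index signature is an incidence invariant. Everything after that is the bookkeeping of matching each block of Lemma \ref{lem2.5} to its ${\EuScript A}$-symbol and verifying the relabelling $\overline{n}=n-2\ell,\ \overline{k}=k-2\ell$ that turns $\Sigma_{a_1,\ldots,a_{2\ell}}$ into a fresh copy of $\Sigma_{\overline{n}}$.
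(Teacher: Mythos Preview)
Your proposal is correct and follows essentially the same route as the paper: interpret $B_f$ as the incidence matrix of $(I(k,2n),S_{\alpha_{rs}})_{\alpha_{rs}\in I(k-2,2n)}$, partition the row index set $I(k-2,2n)$ via Lemma~\ref{lem2.5}, apply Corollary~\ref{sumconfs}, and then name each block with Lemmas~\ref{isomconf1} and~\ref{isomconf2}. Your explicit ``decisive observation'' that distinct blocks occupy disjoint column sets (because incidence only adjoins a symplectic pair $P_i$, leaving the free part unchanged) is in fact more careful than the paper, which invokes Corollary~\ref{sumconfs} directly without isolating this point.
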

\begin{proof}
$A):$  $k\geq 2$ even integer, $r=(k+2)/2$ and $1\leq \ell \leq r-2$ then
consider the incidence configuration \ref{incidekisot} $(I(k, 2n) , S_{\alpha_{rs}})_{\alpha_{rs}\in I(k-2, 2n)}$
Consider the  incidence matrices given in \ref{matrizcomp}, \ref{noddj} y \ref{nodd1}.\\
Now  the partition given in \ref{lem2.5}\\
$I(k, 2n)=C_{\frac{k-2}{2}}(\Sigma_{n})\cup\Big(\bigcup_{\ell=1}^{r-2}
\bigcup_{\substack{1\leq a_1<\cdots < a_{2\ell}\leq 2\ell\\ a_i+a_j\neq 2k+1}}(a_1,\ldots, a_{2\ell})\times C_{\frac{k-2(\ell+1)}{2}}(\Sigma_{a_1,\ldots ,a_{2\ell}})\Big)$\\
So, we have  \\
$(I(k, 2n), S_{\alpha_{rs}})_{\alpha_{rs}\in C_{\frac{k-2}{2}}(\Sigma_{n})\cup\Big(\bigcup_{\ell=1}^{r-2}
\bigcup_{\substack{1\leq a_1<\cdots < a_{2\ell}\leq 2\ell\\ a_i+a_j\neq 2k+1}}(a_1,\ldots, a_{2\ell})\times C_{\frac{k-2(\ell+1)}{2}}(\Sigma_{a_1,\ldots ,a_{2\ell}})\Big)}$\\
consequently by the corollary  \ref{sumconfs}
we have that the incidence matrix  satisfies\\
$B_{f}={\EuScript A}^{k/2}_{n-\frac{k-2}{2}}\oplus\bigg(\bigoplus_{i=1}^{r-2} \bigoplus_{\substack{1\leq a_1<\cdots<a_{2\ell}\leq 2n\\ a_i+a_j\neq 2n+1}}( a_1, \ldots, a_{2\ell}){\EuScript A}^{(k-2\ell)/2}_{n-\frac{k-2(\ell+1)}{2}}\bigg)$ \\
$B)$:  $k\geq 3$ odd integer, $r=(k+1)/2$ and $1\leq \ell \leq r-2$ then\\
Considering the incidence configuration \ref{incidekisot}  and the partition of $I(n-2, 2n)$ given in \ref{In2npartc12}, 
consequently by the corollary  \ref{sumconfs}\\
$B_{f}=({\EuScript A}^{\lfloor k/2 \rfloor}_{n-\lfloor \frac{k-2}{2}\rfloor})^n\oplus\bigg(\bigoplus_{i=1}^{r-2} \bigoplus_{\substack{1\leq a_1<\cdots<a_{2\ell+1}\leq 2n\\ a_i+a_j\neq 2n+1}}( a_1, \ldots, a_{2\ell+1}){\EuScript A}^{(k-1)-2\ell)/2}_{[(n-1)-2\ell]-\frac{(k-3)-2\ell}{2}}\bigg)$ \\
\end{proof}
\begin{ejemplo}\label{example0001}
Let $E$ be a simplectic vector space of dimension 14 and $f:\wedge^7E\longrightarrow \wedge^5E$ contraction map then
    \begin{align*}
    B_f  = \bigoplus_{i=1}^7 {\EuScript L}_4^{\{i\}} \oplus \Big(
    \bigoplus_{1\leq a_1 < a_2< a_3 \leq 14 \atop a_i + a_j \neq 15 }
    (a_1 a_2 a_3){\EuScript L}_3 \Big) \oplus \Big( \bigoplus_{1\leq
        a_1< a_2<\ldots <a_4<a_5\leq 14 \atop a_i + a_j \neq 15} (a_1a_2 a_3a_4 a_5){\EuScript
        L}_2 \Big),
    \end{align*}
    where ${\EuScript L}_4^{\{i\}} = {\EuScript L}_4$, $(a_1a_2a_3){\EuScript
        L}_3 = {\EuScript L}_3$ and $(a_1a_2a_3a_4a_5){\EuScript
        L}_2 = {\EuScript L}_2$.  
$$ \begin{aligned}
    B_f= \begin{pmatrix}
    \begin{tabular}{|c c c c c c c c c c}
    \cline{2-1} \multicolumn{1}{c}{}  &
    \multicolumn{1}{|c|}{${\EuScript L}_4$} & & \multicolumn{1}{c}{} &
    & & \multicolumn{1}{c}{$0$}& \\
    \cline{2-2}\multicolumn{1}{c}{} & \multicolumn{1}{c}{} &
    \multicolumn{1}{c}{$ \ddots$} & & \multicolumn{1}{c}{ }  & \\
    \cline{4-3}\multicolumn{1}{c}{ } &\multicolumn{1}{c}{ } &
    \multicolumn{1}{c|}{ }&\multicolumn{1}{c|}{${\EuScript L}_4$}  & & \\
    \cline{5-3}\cline{4-4} \multicolumn{1}{c}{} & \multicolumn{1}{c}{}
    & \multicolumn{1}{c}{}& \multicolumn{1}{c|}{} &
    \multicolumn{1}{c|}{${\EuScript L}_3$}   & \\
    \cline{5-5} \multicolumn{1}{c}{} & \multicolumn{1}{c}{$0$} &
    \multicolumn{1}{c}{}& \multicolumn{1}{c}{} & \multicolumn{1}{c}{}
    & \multicolumn{1}{c}{$\ddots$}
    \\ \cline{7-7}
    \multicolumn{1}{c}{} & \multicolumn{1}{c}{} &
    \multicolumn{1}{c}{}& \multicolumn{1}{c}{ } &
    \multicolumn{1}{c}{ } & \multicolumn{1}{c|}{ } &   \multicolumn{1}{c|}{${\EuScript L}_3$} \\
    \cline{7-7}\cline{8-8} \multicolumn{1}{c}{} & \multicolumn{1}{c}{}
    & \multicolumn{1}{c}{} & \multicolumn{1}{c}{}& \multicolumn{1}{c}{
    } &
    \multicolumn{1}{c}{ } & \multicolumn{1}{c|}{ } &   \multicolumn{1}{c|}{${\EuScript L}_2$} \\
    \cline{8-8}  \multicolumn{1}{c}{} & \multicolumn{1}{c}{} &
    \multicolumn{1}{c}{} & \multicolumn{1}{c}{} &
    \multicolumn{1}{c}{}& \multicolumn{1}{c}{} & \multicolumn{1}{c}{}
    &\multicolumn{1}{c}{} & \multicolumn{1}{c}{$\ddots$} \\
    \cline{10-10}  \multicolumn{1}{c}{} & \multicolumn{1}{c}{} &
    \multicolumn{1}{c}{} & \multicolumn{1}{c}{} &
    \multicolumn{1}{c}{}& \multicolumn{1}{c}{} & \multicolumn{1}{c}{}
    &\multicolumn{1}{c}{} & \multicolumn{1}{c|}{} &
    \multicolumn{1}{c|}{${\EuScript L}_2$} \\ \cline{10-10}
    \end{tabular} \end{pmatrix}
    \end{aligned}
$$
\end{ejemplo}
\section{ build  fractal matrix} \label{Secc2}
Let $M$ be a matrix of order $n\times m$, the operation ${\EuScript O}(M)$ is to paste to the matrix $M$ at the bottom, the identity matrix $I_{m\times m}$, which generates a matrix  of order $(n+m)\times m$, that is 
${\EuScript O}(M) =\left(\begin{smallmatrix}
M  \\ I_{C_{\ell -1}^{k+\ell -2}} 
        \end{smallmatrix}\right) $.
 If we have a matrix vector $V=(M_1, \ldots, M_t)$  the operation 
${\EuScript O}(V)$ is the matrix vector $({\EuScript O}(M_1), \ldots, {\EuScript O}(M_t))$. 
Let $M_1$ and $M_2$ be two matrices of order $n_1\times m_1$ and   $n_2\times m_2$ respectively where $n_1\geq n_2$, 
{\it paste the matrix concatenatedly to the right}, side by side, is to get the matrix ${\EuScript P}(M_1, M_2)=M_1\bigsqcup M_2$  of order $n_1\times (m_1+m_2)$, where  $\sqcup$ means
joining together side-by-side and aligning the bottoms of the
corresponding identity matrices and filling the non-marked spaces on
the upper right blocks with zeroes. Finally denote by $\underline{k}$  the order matrix $1\times k$ filled with 1's.
\begin{equation}\label{Algoritm1}
 Algorithm\; I
\end{equation}
\hrulefill
\begin{description}
\item[{\rm Input: }] $k$ and $\ell$ arbitrary positive integers.
\item[{\rm Output: }] \; The  $A_k^{\ell}$ matrix, with $k$ ones in each row and $\ell$ ones in each column  
\end{description}
\hrulefill
\begin{description}
\item[{\rm Step 1.}]\; Let $V=(\underline{k}, \underline{k-1}, \ldots, \underline{2}, \underline{1})$ be matrix vector.
\item[{\rm Step 2.}] \;We apply the operation ${\EuScript O}(V)$ to the matrix vector given in Step 1 and  we obtain   
the matrix vector $({\EuScript O}(\underline{k}), {\EuScript O}(\underline{k-1}), \ldots, {\EuScript O}(\underline{2}), {\EuScript O}(\underline{1}))$.
\item[{\rm Step 3.}]\; Put ${\EuScript P}({\EuScript O}(\underline{k}), {\EuScript O}(\underline{k-1}), \ldots, {\EuScript O}(\underline{2}), {\EuScript O}(\underline{1}))$ we generate a matrix that we denote by $A_k^2$,
 with $k$ ones in each row and $2$ ones in each column
\item[{\rm Step 4.}]\; Now consider the matrix vector \\ $V=(A_k^2, A_{k-1}^2,\ldots, A_2^2,A_1^2)$.
\item[{\rm Step 5.}]\; Returne to the Steps 2,  3,  4  with matrix vector $V=(A_k^2, A_{k-1}^2,\ldots, A_2^2,A_1^2)$
to build a matrix $A_k^3$  given by ${\EuScript P}({\EuScript O}(A_k^2), {\EuScript O}(A_{k-1}^2),\ldots, {\EuScript O}(A_2^2), {\EuScript O}(A_1^2))$,  with $k$ ones in each row and $3$ ones in each column. \\
Consider vector $V=(A_k^3, A_{k-1}^3,\ldots, A_2^3,A_1^3)$
\item[{\rm Step 6.}] \;The algorithm ends when when the number of ones in each column is equal to $\ell$
\end{description}
\hrulefill 
\begin{definition}\label{Ass-Matrix}
 Let  $k$ and $\ell$ be arbitrary positive integers,  we define the matrices  $A_k^{\ell}$  inductively as
 $A_k^2:={\EuScript P}({\EuScript O}(\underline{k}), {\EuScript O}(\underline{k-1}), \ldots, {\EuScript O}(\underline{2}), {\EuScript O}(\underline{1}))$ and \\
 $A_k^3:= {\EuScript P}({\EuScript O}(A_k^2), {\EuScript O}(A_{k-1}^2),\ldots, {\EuScript O}(A_2^2), {\EuScript O}(A_1^2))$, that is 
  $$ A_k^3=A_k^2 \left(I_{C^{k+1}_2}
    \right)\sqcup A_{k-1}^2 \left( I_{C^{k}_2} \right)\sqcup \cdots \sqcup
    A_2^2\left( I_{C^3_2} \right)\sqcup A_1^2 \left( I_{C^2_2} \right).$$
 With the previous notation we define the following matrices
 \begin{align*}
    A_k^4&=A_k^3\left( I_{C^{k+2}_3}
    \right) \sqcup A_{k-1}^3 \left( I_{C^{k+1}_3} \right) \sqcup \cdots \sqcup
    A_2^3\left( I_{C^4_3} \right) \sqcup A_1^3\left( I_{C^3_3} \right),\\
    A_k^5&=A_k^4\left( I_{C^{k+3}_4}
    \right) \sqcup A_{k-1}^4 \left( I_{C^{k+2}_4} \right)\sqcup \cdots \sqcup
    A_2^4\left( I_{C^5_4} \right)\sqcup A_1^4 \left( I_{C^4_4}\right),\\
    \vdots & \quad\quad\quad\quad\quad\quad\quad \quad\quad\quad\vdots \\
  A_k^{\ell} & =A_k^{\ell-1}\left( I_{C^{k+\ell-2}_{\ell-1}}
\right)\sqcup  \cdots \sqcup A_2^{\ell-1}\left(
I_{C^{k-1}_{k-2}} \right) \sqcup A_1^{\ell-1}\left(
I_{C^{k-2}_{k-2}} \right). 
  \end{align*}
 \end{definition}
 \begin{ejemplo}
If in the previous algorithm we do $k=4$ and $\ell=3$ the algorithm produces
the matrix of $15\times 20=300$ inputs,  $A_4^3$ has only 60 ones, so its density is $\frac{60}{300}=0.2$.
  $$ A_4^3= \left( \begin{tabular}{cccccccccccccccccccc}
1 & 1 & 1 & 1 & 0 & 0 & 0 & 0 & 0 & 0 & 0 & 0 & 0 & 0 & 0 & 0 & 0 &
0 & 0 & 0 \cr \cline{1-4}\multicolumn{1}{|c} 1 & 0 & 0 & 0 &
\multicolumn{1}{|c}{$1$} & 1 & 1 & 0 & 0 & 0 & 0 & 0 & 0 & 0 & 0 & 0
& 0 & 0 & 0 & 0 \cr \cline{5-7}\multicolumn{1}{|c} 0 & 1 & 0 & 0 &
\multicolumn{1}{|c}{$1$} & 0 &  0 & \multicolumn{1}{|c} 1 & 1 & 0 &
0 & 0 & 0 & 0 & 0 & 0 & 0 & 0 & 0 & 0 \cr
\cline{8-9}\multicolumn{1}{|c} 0 & 0 & 1 & 0 & \multicolumn{1}{|c} 0
& 1 & 0 & \multicolumn{1}{|c} 1 & 0 & \multicolumn{1}{|c} 1 & 0 & 0
& 0 & 0 & 0 & 0 & 0 & 0 & 0 & 0 \cr \cline{10-10}\multicolumn{1}{|c}
0 & 0 & 0 & 1 & \multicolumn{1}{|c} 0 & 0 &  1 & \multicolumn{1}{|c}
0 & 1 & \multicolumn{1}{|c} 1 & \multicolumn{1}{|c} 0 & 0 & 0 & 0 &
0 & 0 & 0 & 0 & 0 & 0 \cr \cline{1-10}\multicolumn{1}{|c} 1 & 0 & 0
& 0 & 0 & 0 & 0 & 0 & 0 & 0 & \multicolumn{1}{|c} 1 & 1 & 1 & 0 & 0
& 0 & 0 & 0 & 0 & 0 \cr \cline{11-13}\multicolumn{1}{|c} 0 & 1 & 0 &
0 & 0 & 0 & 0 & 0 & 0 & 0 &\multicolumn{1}{|c} 1 & 0 & 0 &
\multicolumn{1}{|c} 1 & 1 & 0 & 0 & 0 & 0 & 0 \cr
\cline{14-15}\multicolumn{1}{|c} 0 & 0 & 1 & 0 & 0 & 0 & 0 & 0 & 0 &
0 & \multicolumn{1}{|c} 0 & 1 & 0 & \multicolumn{1}{|c} 1 & 0 &
\multicolumn{1}{|c} 1 & 0 & 0 & 0 & 0 \cr \cline{16-16}
\multicolumn{1}{|c} 0 & 0 & 0 & 1 & 0 & 0 & 0 & 0 & 0 & 0 &
\multicolumn{1}{|c} 0 & 0 & 1 & \multicolumn{1}{|c} 0 & 1 &
\multicolumn{1}{|c} 1 & \multicolumn{1}{|c} 0 & 0 & 0 & 0 \cr
\cline{11-16}\multicolumn{1}{|c} 0 & 0 & 0 & 0 & 1 & 0 & 0 & 0 & 0 &
0 & \multicolumn{1}{|c} 1 & 0 & 0 & 0& 0 & 0 & \multicolumn{1}{|c} 1
& 1 & 0 & 0 \cr \cline{17-18} \multicolumn{1}{|c} 0 & 0 & 0 & 0 &  0
& 1 & 0 & 0 & 0 & 0 &\multicolumn{1}{|c}  0 & 1 & 0 & 0& 0 & 0 &
\multicolumn{1}{|c} 1 & 0 & \multicolumn{1}{|c} 1 & 0 \cr
\cline{19-19}\multicolumn{1}{|c} 0 & 0 & 0 & 0 & 0 & 0 & 1 & 0 & 0 &
0 & \multicolumn{1}{|c} 0 & 0 & 1 & 0& 0 & 0 & \multicolumn{1}{|c} 0
& 1 & \multicolumn{1}{|c} 1 & \multicolumn{1}{|c} 0 \cr
\cline{17-19}\multicolumn{1}{|c} 0 & 0 & 0 & 0 & 0 & 0 & 0 & 1 & 0 &
0 & \multicolumn{1}{|c} 0 & 0 &  0 & 1& 0 & 0 & \multicolumn{1}{|c}
1 & 0 & 0 & \multicolumn{1}{|c} 1 \cr
 \cline{20-20}\multicolumn{1}{|c} 0 & 0 & 0 & 0 & 0 & 0 & 0 & 0 & 1 &
0 & \multicolumn{1}{|c} 0 & 0 & 0 & 0& 1 & 0 & \multicolumn{1}{|c} 0
& 1 & 0 & \multicolumn{1}{|c|} 1 \cr
\cline{20-20}\multicolumn{1}{|c} 0 & 0 & 0 & 0 & 0 & 0 & 0 & 0 & 0 &
1 & \multicolumn{1}{|c} 0 & 0 & 0 & 0& 0 & 1 & \multicolumn{1}{|c} 0
& 0 & 1 & \multicolumn{1}{|c|} 1 \cr \cline{1-20}
\end{tabular} \right) \quad $$ \\
\end{ejemplo}
\begin{lemma}
For each $k$, and $\ell$ the matrix $A_k^{\ell}$ it is of order $C_{\ell-1}^{k+\ell-1}\times C_{\ell}^{k+\ell-1}$ 
\end{lemma}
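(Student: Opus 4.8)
The plan is to argue by induction on $\ell$, isolating first how each of the two building operations ${\EuScript O}$ and ${\EuScript P}$ acts on the order of a matrix. The two facts I will record are: if $M$ has order $p\times q$, then ${\EuScript O}(M)$ — obtained by pasting $I_{q\times q}$ beneath $M$ — has order $(p+q)\times q$, so that the column count is unchanged and the row count grows by exactly $q$; and if $M_1,\dots,M_t$ have orders $p_i\times q_i$, then ${\EuScript P}(M_1,\dots,M_t)$ has order $\big(\max_i p_i\big)\times\big(\sum_i q_i\big)$, since pasting side by side adds the column counts while aligning the blocks along a common bottom. These two rules reduce the whole statement to bookkeeping with binomial coefficients.

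For the base case $\ell=2$, recall $A_k^2={\EuScript P}({\EuScript O}(\underline{k}),\dots,{\EuScript O}(\underline{1}))$. Each $\underline{j}$ has order $1\times j$, so ${\EuScript O}(\underline{j})$ has order $(j+1)\times j$. Applying the rule for ${\EuScript P}$, the row count is $\max_{1\le j\le k}(j+1)=k+1=C^{k+1}_1$ and the column count is $\sum_{j=1}^{k}j=C^{k+1}_2$, which is exactly $C^{k+1}_1\times C^{k+1}_2$, the claimed order at $\ell=2$.

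For the inductive step I would assume $A_j^{\ell-1}$ has order $C^{j+\ell-2}_{\ell-2}\times C^{j+\ell-2}_{\ell-1}$ for every $j$. Then ${\EuScript O}(A_j^{\ell-1})$ keeps $C^{j+\ell-2}_{\ell-1}$ columns and has $C^{j+\ell-2}_{\ell-2}+C^{j+\ell-2}_{\ell-1}=C^{j+\ell-1}_{\ell-1}$ rows by Pascal's rule. Since $A_k^\ell={\EuScript P}({\EuScript O}(A_k^{\ell-1}),\dots,{\EuScript O}(A_1^{\ell-1}))$, its row count is $\max_{1\le j\le k}C^{j+\ell-1}_{\ell-1}=C^{k+\ell-1}_{\ell-1}$ (the binomial is increasing in $j$), and its column count is $\sum_{j=1}^{k}C^{j+\ell-2}_{\ell-1}=C^{k+\ell-1}_{\ell}$ by the hockey-stick identity $\sum_{i=r}^{m}C^{i}_{r}=C^{m+1}_{r+1}$ with $r=\ell-1$ and $m=k+\ell-2$. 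This is precisely $C^{k+\ell-1}_{\ell-1}\times C^{k+\ell-1}_{\ell}$, which closes the induction.

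I expect the delicate points to be organizational rather than deep. First, I must read off carefully from the definition of $\sqcup$ that ${\EuScript P}$ really takes the \emph{maximum} of the row counts and pads the upper-right blocks with zeros — this is where a misreading of the alignment convention would derail the row count. Second, I must align the summation range correctly before invoking the hockey-stick identity, i.e.\ check that the summand $C^{j+\ell-2}_{\ell-1}$ runs over $j=1,\dots,k$ and reindexes to $i=\ell-1,\dots,k+\ell-2$. Both steps rest on standard binomial identities, so once the two dimension rules for ${\EuScript O}$ and ${\EuScript P}$ are pinned down, no substantive obstacle remains.
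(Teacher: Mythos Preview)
Your argument is correct. The paper takes a slightly different route: it runs a double induction on $k$ and $\ell$, implicitly using the $2\times 2$ block form
\[
A_k^{\ell}\;=\;\left[\begin{array}{c|c} A_k^{\ell-1} & 0 \\ \hline I_{C_{\ell-1}^{k+\ell-2}} & A_{k-1}^{\ell}\end{array}\right],
\]
which amounts to peeling off only the first summand ${\EuScript O}(A_k^{\ell-1})$ and recognizing the remaining concatenation as $A_{k-1}^{\ell}$; two applications of Pascal's rule then give both the row and the column count. Your single induction on $\ell$ works instead directly from the full concatenation $A_k^{\ell}={\EuScript P}({\EuScript O}(A_k^{\ell-1}),\dots,{\EuScript O}(A_1^{\ell-1}))$, trading one use of Pascal for the hockey-stick identity on the column side. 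Your approach is a bit more self-contained, since the $2\times 2$ block decomposition is established in the paper only as a separate theorem \emph{after} this lemma; the paper's approach, in turn, needs only Pascal and foreshadows the fractal structure that is the paper's main point.
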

\begin {proof}
The proof is by induction on $k$ and $\ell$.
Clearly the matrices $A_1^1=1$,  $A_k^1$, $A_1^{\ell}$ and $A_2^2$ they meet the formula.
Suppose for all $k$ and $\ell$ and for everything $k^{\prime}<k$  we have the order of $A_k^{\ell^{\prime}}$ is $C_{\ell^{\prime}-1}^{k+\ell^{\prime}-1}\times C_{\ell^{\prime}}^{k+\ell^{\prime}-1}$ and for all $k^{\prime}< k$  the order of $A_{k^{\prime}}^{\ell}$ is $C_{\ell-1}^{k^{\prime}+\ell-1}\times C_{\ell}^{k^{\prime}+\ell-1}$. Then the order of the matrix $A_k^{\ell}$ is given as follows 
$$   \left[ \begin{tabular}{c|c}
$C_{(\ell-1)-1}^{k+(\ell-1)-1}\times C_{\ell-1}^{k+(\ell-1)-1}$ & $ 0 $ \cr\hline $C_{\ell-1}^{k+\ell-2}\times C_{\ell-1}^{k+\ell-2}$  & $C_{\ell-1}^{(k-1)+\ell-1}\times C_{\ell}^{(k-1)+\ell-1}$
\end{tabular} \right]$$ 
in addition, the order of $A_k^{\ell}$ is given by  $$(C_{\ell-2}^{k+\ell-2} + C_{\ell-1}^{k+\ell-2})\times (C_{\ell-1}^{k+\ell-2}\times C_{\ell}^{k+\ell-2})=C_{\ell-1}^{k+\ell-1}\times C_{\ell}^{k+\ell-1}$$
\end{proof}
 \begin{theorem}\label{Fragment-11}
Let $k\geq 2$  and $\ell\geq 2$ arbitrary positive integers then $A_k^{\ell}$ is fractal matrix.
$$ A_k^{\ell} = \left[ \begin{tabular}{c|c}
 $A_k^{\ell-1}$ & $ 0 $ \cr\hline $I_{C_{\ell-1}^{k+\ell-2}}$  & $A_{k-1}^{\ell}$
\end{tabular} \right] $$
\end{theorem}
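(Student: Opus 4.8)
The plan is to read Definition \ref{Ass-Matrix} as a single ${\EuScript P}$-concatenation, strip off its leftmost summand, and identify everything that survives as $A_{k-1}^{\ell}$; the four announced blocks then emerge from the two operations ${\EuScript O}$ and ${\EuScript P}$ together with the order formula of the preceding lemma. Concretely, I would first record that, by Definition \ref{Ass-Matrix},
$$A_k^{\ell}={\EuScript P}\bigl({\EuScript O}(A_k^{\ell-1}),\,{\EuScript O}(A_{k-1}^{\ell-1}),\,\ldots,\,{\EuScript O}(A_1^{\ell-1})\bigr).$$
Since the bottom-alignment rule defining ${\EuScript P}$ is associative, this coincides with ${\EuScript P}\bigl({\EuScript O}(A_k^{\ell-1}),\,{\EuScript P}({\EuScript O}(A_{k-1}^{\ell-1}),\ldots,{\EuScript O}(A_1^{\ell-1}))\bigr)$, and the inner concatenation is exactly $A_{k-1}^{\ell}$ by the same definition applied to the pair $(k-1,\ell)$. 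Thus $A_k^{\ell}={\EuScript P}\bigl({\EuScript O}(A_k^{\ell-1}),\,A_{k-1}^{\ell}\bigr)$, reducing the whole statement to a single paste of two matrices.

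Next I would expand ${\EuScript O}(A_k^{\ell-1})=\left(\begin{smallmatrix} A_k^{\ell-1}\\ I_{C_{\ell-1}^{k+\ell-2}}\end{smallmatrix}\right)$, the identity having size equal to the number of columns $C_{\ell-1}^{k+\ell-2}$ of $A_k^{\ell-1}$. The decisive bookkeeping is the comparison of heights: the order lemma gives $A_{k-1}^{\ell}$ exactly $C_{\ell-1}^{(k-1)+\ell-1}=C_{\ell-1}^{k+\ell-2}$ rows, i.e. precisely as many rows as the bottom identity block of ${\EuScript O}(A_k^{\ell-1})$, whereas ${\EuScript O}(A_k^{\ell-1})$ is taller, with $C_{\ell-1}^{k+\ell-1}$ rows. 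Because ${\EuScript P}$ aligns bottoms and fills the upper-right corner with zeros, $A_{k-1}^{\ell}$ seats itself flush against $I_{C_{\ell-1}^{k+\ell-2}}$, the upper strip carrying $A_k^{\ell-1}$ acquires a zero block to its right, and the claimed four-block form appears; the column tally $C_{\ell-1}^{k+\ell-2}+C_{\ell}^{k+\ell-2}=C_{\ell}^{k+\ell-1}$ confirms that the widths close up correctly.

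It then remains to certify that $A_k^{\ell}$ is fractal in the sense of Definition \ref{fractusmatrix01}, which I would do by induction on $k+\ell$ using the decomposition just obtained. The block form displays $A_k^{\ell}$ as recursively built and tetra-fragmented with corner blocks $B=A_k^{\ell-1}$ and $C=A_{k-1}^{\ell}$, fractal by the inductive hypothesis (the recursion terminating at the one-line matrices $A_k^{1}$ and $A_1^{\ell}$); the same induction reads off the $(k,\ell)$-matrix property directly from the block form, since in the lower strip the leading identity supplies the one extra $1$ per row beyond the $k-1$ of $A_{k-1}^{\ell}$, and in the left strip it supplies the one extra $1$ per column beyond the $\ell-1$ of $A_k^{\ell-1}$; sparsity of binomial order is then the density estimate together with the order lemma. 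I expect the only genuinely delicate point to be the associativity/alignment step of the first paragraph: one must verify that the ``bottom'' used to align the identities when $A_{k-1}^{\ell}$ is formed on its own is the same reference line as in the full $k$-fold paste, so that no spurious vertical shift creeps in between the inner identities and the outer one.
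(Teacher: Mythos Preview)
Your approach is essentially the same as the paper's: both write $A_k^{\ell}$ as the full $\sqcup$-concatenation from Definition~\ref{Ass-Matrix}, peel off the leftmost block ${\EuScript O}(A_k^{\ell-1})$, and recognize the remaining tail as the definition of $A_{k-1}^{\ell}$, whence the $2\times 2$ block form. The paper's proof stops there; you add the height comparison via the order lemma to justify why the bottoms align exactly, and you add an inductive check of the remaining clauses of Definition~\ref{fractusmatrix01} (the $(k,\ell)$-property, sparsity, binomial order, and that the corner blocks are themselves fractal), none of which the paper spells out. Your caution about the associativity/alignment of ${\EuScript P}$ is well placed but harmless here, since all the identity blocks sit at the very bottom by construction.
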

\begin{proof}
By Algorithm \ref{Algoritm1}  we have that $A_k^{\ell}$ is constructed with a recursive algorithm.
That is
$$ A_k^{\ell} =A_k^{\ell-1}\left( I_{C^{k+\ell-2}_{\ell-1}}
\right)\sqcup   A_{k-1}^{\ell-1}\left(
I_{C^{k+\ell-3}_{\ell-1}} \right) \sqcup\cdots \sqcup A_1^{\ell-1}\left(
I_{C^{\ell-1}_{\ell-1}} \right)$$
By definition \ref{Ass-Matrix} we have
$$A_{k-1}^{\ell}=A_{k-1}^{\ell-1}\left(
I_{C^{k+\ell-3}_{\ell-1}} \right)\sqcup A_{k-2}^{\ell-1}\left(
I_{C^{k+\ell-4}_{\ell-1}} \right)  \sqcup\cdots \sqcup A_1^{\ell-1}\left(
I_{C^{\ell-1}_{\ell-1}} \right) $$
Then
$$ A_k^{\ell} = \left[ \begin{tabular}{c|c}
 $A_k^{\ell-1}$ & $ 0 $ \cr\hline $I_{C_{\ell-1}^{k+\ell -2}}$  & $A_{k-1}^{\ell}$
\end{tabular} \right] $$
\end{proof}
\section{ The matrix   ${\EuScript A}_{n- \lfloor \frac{n-2}{2}\rfloor}^{\lfloor n/2 \rfloor}$ is fractal matrix} 
Let $E$ symplectic vector space of dimension $2n$, over an arbitrary field ${\mathbb F}$ with a nondegenerate, skew-symmetric bilinear form $\langle\; ,\;\rangle$. 
Let $m\geq 8$ an even integer, define
\begin{align*}
[s]&=\{1,\ldots, s\},\quad\text{for $1\leq s\leq m$}\\
[s,m]&=\{P_{s+1},\ldots, P_m\}\\
r&=\frac{m+2}{2}
\end{align*}
For 
\begin{equation}\label{alpha1113}
\overline{\alpha}=(\overline{\alpha}_1,\ldots,\overline{\alpha}_{(m-6)/2})\in I((m-6)/2,m)
\end{equation}
 we define the {\it triangle set}, $T_{\overline{\alpha}}\subseteq C_{\frac{m-2}{2}}(\Sigma_m)$, as 
\begin{equation}\label{tttrian}
T_{\overline{\alpha}}=\big\{ P_{\alpha}\in C_{\frac{m-2}{2}}(\Sigma_m): P_{\alpha}=P_{(\alpha_1, \ldots, \alpha_{\frac{m-6}{2}}, \epsilon_{\frac{m-4}{2}},  \epsilon_{\frac{m-2}{2}})} \big\}
\end{equation}
where $\alpha_{\frac{m-6}{2}+1}\leq \epsilon_{\frac{m-4}{2}}< \epsilon_{\frac{m-2}{2}}\leq m$.
Using \ref{alpha1113} and the cartesian product we can see that the triangle set satisfies
\begin{equation}\label{sumtria111}
T_{(\overline{\alpha}_1,\ldots, \overline{\alpha}_{(m-6)/2})}:=\{P_{(\overline{\alpha}_1,
\ldots, {\overline{\alpha}_{(m-6)/2}})}\}\times
C_2\big([\alpha((m-6)/2),m]\big)
\end {equation}
\begin{lemma}\label{lem2.1}
Let $m\geq 8$ even integer then
$$ C_{\frac{m-2}{2}}(\Sigma_m)=\bigcup_{\alpha\in I((m-6)/2,m)}T_{\alpha}$$
\end{lemma}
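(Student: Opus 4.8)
The plan is to establish the asserted set equality by proving the two inclusions separately; almost all of the content reduces to a single decomposition of an arbitrary index tuple into a \emph{head} of length $(m-6)/2$ and a \emph{tail} of length two.

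For the inclusion $\bigcup_{\alpha} T_\alpha \subseteq C_{\frac{m-2}{2}}(\Sigma_m)$ there is nothing to do beyond unwinding definitions: by \eqref{tttrian} every triangle set $T_{\overline{\alpha}}$ is, by construction, a subset of $C_{\frac{m-2}{2}}(\Sigma_m)$, so any union of such sets is again contained in $C_{\frac{m-2}{2}}(\Sigma_m)$. The substance of the lemma is therefore the reverse inclusion. Given an arbitrary $P_\gamma \in C_{\frac{m-2}{2}}(\Sigma_m)$, write $\gamma = (\gamma_1, \ldots, \gamma_{(m-2)/2})$ with $1 \leq \gamma_1 < \cdots < \gamma_{(m-2)/2} \leq m$. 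I would set $\overline{\alpha} := (\gamma_1, \ldots, \gamma_{(m-6)/2})$, the truncation to the first $(m-6)/2$ coordinates, together with $\epsilon_{(m-4)/2} := \gamma_{(m-4)/2}$ and $\epsilon_{(m-2)/2} := \gamma_{(m-2)/2}$. The hypothesis $m \geq 8$ guarantees $(m-6)/2 \geq 1$, so $\overline{\alpha}$ is a genuine strictly increasing tuple and hence $\overline{\alpha} \in I((m-6)/2, m)$ as required by \eqref{alpha1113}.

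The verification then rests entirely on the strict monotonicity of $\gamma$, which yields exactly the constraints appearing in \eqref{tttrian}: indeed $\overline{\alpha}_{(m-6)/2} = \gamma_{(m-6)/2} < \gamma_{(m-4)/2} = \epsilon_{(m-4)/2} < \gamma_{(m-2)/2} = \epsilon_{(m-2)/2} \leq m$, so that $P_\gamma = P_{(\overline{\alpha}_1, \ldots, \overline{\alpha}_{(m-6)/2}, \epsilon_{(m-4)/2}, \epsilon_{(m-2)/2})} \in T_{\overline{\alpha}}$. Since $P_\gamma$ was arbitrary, $C_{\frac{m-2}{2}}(\Sigma_m) \subseteq \bigcup_{\alpha} T_\alpha$, completing the equality. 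Equivalently, one may phrase this reverse inclusion through the Cartesian-product description \eqref{sumtria111}: the element $P_\gamma$ lies in $\{P_{\overline{\alpha}}\} \times C_2([\overline{\alpha}_{(m-6)/2}, m])$ precisely because its last two Pl\"ucker indices form a strictly increasing pair drawn from $\{\overline{\alpha}_{(m-6)/2}+1, \ldots, m\}$.

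I expect no serious mathematical obstacle; the only delicate point is the index bookkeeping, namely keeping the three quantities $(m-6)/2$, $(m-4)/2$, $(m-2)/2$ straight and confirming that the head truncation genuinely lands in $I((m-6)/2, m)$ for all $m \geq 8$. I would also remark, as a free byproduct, that the assignment $P_\gamma \mapsto \overline{\alpha}$ reading off the first $(m-6)/2$ coordinates is single-valued, so the covering is in fact a \emph{disjoint} union, i.e.\ a partition of $C_{\frac{m-2}{2}}(\Sigma_m)$; only the weaker covering statement is needed here, but recording disjointness may be convenient for the subsequent block-decomposition arguments.
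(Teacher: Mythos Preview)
Your proposal is correct and follows essentially the same approach as the paper: both argue that $T_{\overline{\alpha}}\subseteq C_{\frac{m-2}{2}}(\Sigma_m)$ by construction, and for the reverse containment both split an arbitrary $P_\gamma$ into its first $(m-6)/2$ coordinates (the head $\overline{\alpha}$) and its last two coordinates, then invoke the strict monotonicity of $\gamma$ together with the Cartesian-product description \eqref{sumtria111} to place $P_\gamma$ in $T_{\overline{\alpha}}$. Your additional remark that the union is in fact disjoint is not in the paper's proof but is correct and harmless.
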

\begin{proof}
By  construction  $T_{\alpha}\subseteq C_{\frac{m-2}{2}}(\Sigma_m)$, for each $\alpha \in I((m-6)/2,m)$ so \\ 
$\bigcup_{\alpha\in I((m-6)/2,m-2)}T_{\alpha}\subset C_{\frac{m-2}{2}}(\Sigma_m)$.
For the other containment, let
$P_{(\alpha_1,\ldots, \alpha_{(m-2)/2})}\in C_{\frac{m-2}{2}}(\Sigma_m)$ and hence $(\alpha_1,\ldots, \alpha_{(m-6)/2})\in I((m-6)/2, m)$ and $(\alpha_{(m-4)/2}, \alpha_{(m-2)/2})$ satisfy that
$$\alpha_{(m-6)/2+1}\leq \alpha_{(m-4)/2}<\alpha_{(m-2)/2}\leq m$$
It follows that
\begin{align*}
\{P_{\alpha_1},\ldots, P_{\alpha_{(m-2)/2}} \}& \in \{ P_{(\alpha_1,\ldots, {\alpha_{(m-6)/2}})}\}\times C_2([\alpha_{(m-6)/2+1},m])\\
&\quad =T_{(\alpha_1,\ldots, \alpha_{(m-6)/2})}
\end{align*}
\end{proof}
Let  $m$ be even integer and consider \ref{Setconf1}  with $m=n=k$ then we define the injective function
 $\varphi^m:C_{\frac{m-2}{2}}(\Sigma_m)\rightarrow \{0,1\}^{C_{\frac{m}{2}}(\Sigma_m)}$  the
function given by
 \begin{equation}\label{Sec-Mm}
P_{\alpha}\mapsto \big( \varphi_{P_{\beta}}^m(P_{\alpha}) \big)_{P_{\beta}\in C_{\frac{m}{2}}(\Sigma_m)\ },
\end{equation}    
where 
 $$\varphi_{P_{\beta}}^m(P_{\alpha}) 
 =\begin{cases}
1  & \text{if $P_{\beta}\in S_{P_{\alpha}} $} \\
0& \text{otherwise}.
\end{cases}$$
and $S_{P_{\alpha}}=\{P_{\beta}\in C_{\frac{m}{2}}(\Sigma_m): supp\{ \alpha\} \subset supp \{ \beta\}\}$ \\
\begin{lemma}\label{phiinyct}
$\varphi^m$  injective function and with the notation \ref{Lmatrix2} we have that $\varphi^m(C_{\frac{m-2}{2}}(\Sigma_m))={\EuScript L}_r$
\end{lemma}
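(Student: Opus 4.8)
The plan is to recognize that $\varphi^m$ merely records, for each $P_\alpha$, the row of the incidence matrix of the configuration \ref{Setconf1} indexed by $P_\alpha$, and then to extract injectivity directly from the structural results already proved about the sets $S_{P_\alpha}$. First I would recall that, setting $m=n=k$ in \ref{Lmatrix}, the matrix ${\EuScript L}_r={\EuScript A}_{\frac{m+2}{2}}^{m/2}$ is by definition the $C_{\frac{m-2}{2}}^m\times C_{\frac{m}{2}}^m$ incidence matrix of the configuration $\big(C_{\frac{m}{2}}(\Sigma_m),S_{P_\alpha}\big)_{P_\alpha\in C_{\frac{m-2}{2}}(\Sigma_m)}$. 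By the definition of incidence matrix in Subsection \ref{prelimin1}, its entry in the row indexed by $P_\alpha$ and the column indexed by $P_\beta$ equals $1$ precisely when $P_\beta\in S_{P_\alpha}$, that is when $supp\{\alpha\}\subset supp\{\beta\}$. Comparing this with \ref{Sec-Mm}, the vector $\varphi^m(P_\alpha)=\big(\varphi^m_{P_\beta}(P_\alpha)\big)_{P_\beta\in C_{\frac{m}{2}}(\Sigma_m)}$ is exactly that $P_\alpha$-th row, so the image $\varphi^m(C_{\frac{m-2}{2}}(\Sigma_m))$ is, as a collection of $\{0,1\}$-vectors indexed by $C_{\frac{m}{2}}(\Sigma_m)$, the collection of rows of ${\EuScript L}_r$. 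This is the content of the second assertion.

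Next I would prove injectivity. Suppose $\varphi^m(P_\alpha)=\varphi^m(P_{\overline\alpha})$. Then for every $P_\beta\in C_{\frac{m}{2}}(\Sigma_m)$ one has $\varphi^m_{P_\beta}(P_\alpha)=\varphi^m_{P_\beta}(P_{\overline\alpha})$, which by definition means $P_\beta\in S_{P_\alpha}\Leftrightarrow P_\beta\in S_{P_{\overline\alpha}}$; hence $S_{P_\alpha}=S_{P_{\overline\alpha}}$. Corollary \ref{canes2} then gives $P_\alpha=P_{\overline\alpha}$, so $\varphi^m$ is injective. I note that Corollary \ref{canes2} is really the load-bearing input here: it is what rules out two distinct index tuples producing the same incidence vector, and it rests in turn on Corollary \ref{canes1} and Lemma \ref{onesroow}.

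I expect the only delicate point to be bookkeeping rather than mathematics: one must make precise the identification between the image of $\varphi^m$, a set of $\{0,1\}$-tuples indexed by $C_{\frac{m}{2}}(\Sigma_m)$, and the matrix ${\EuScript L}_r$, which is the same data arranged as a rectangular array with a fixed ordering of its rows and columns. Since injectivity guarantees that the $C_{\frac{m-2}{2}}^m$ tuples in the image are pairwise distinct and correspond bijectively to the $C_{\frac{m-2}{2}}^m$ rows of ${\EuScript L}_r$, this identification is immediate up to the chosen row and column orderings, and no further computation is required.
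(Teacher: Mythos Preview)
Your proposal is correct and follows essentially the same route as the paper's proof: both deduce $S_{P_\alpha}=S_{P_{\overline\alpha}}$ from $\varphi^m(P_\alpha)=\varphi^m(P_{\overline\alpha})$ and then invoke Corollary \ref{canes2} for injectivity, and both identify the image with ${\EuScript L}_r$ up to the ordering of rows. Your write-up is in fact more explicit than the paper's on the bookkeeping point (the paper simply says ``up to row permutation''), but the underlying argument is identical.
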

\begin{proof}
If $\big( \varphi_{P_{\beta}}^m(P_{\alpha}) \big)=\big( \varphi_{P_{\beta}}^m(P_{\alpha^{\prime}}) \big)$ then it is easy to see $S_{P_{\alpha}}=S_{P_{\alpha^{\prime}}}$ then by corollary \ref{canes2} we have $P_{\alpha}=P_{\alpha^{\prime}}$.\\
Also the matrix of order $C^m_{\frac{m-2}{2}}\times C^m_{\frac{m}{2}}$ generated by $\varphi^m(C_{\frac{ m-2}{2}}(\Sigma_m))$ is, up to row permutation, equal to the matrix ${\EuScript L}_r$.
\end{proof}
  Let  $\displaystyle\overline{\alpha}=(1, 2,\ldots, (m-6)/2 )\in I((m-6)/2,m)$ so we have  
          \begin{equation}\label{unionTriaReng}
    T_{(1, 2,\ldots, \frac{m-6}{2} )}=
       R_{\frac{m-4}{2}}^{\overline{\alpha}} \cup 
        R_{\frac{m-2}{2}}^{\overline{\alpha}} \cup \cdots \cup 
        R_{m-1}^{\overline{\alpha}}
        \end{equation}
    where 
    \begin{align*}
     R^{\overline{\alpha}}_{\frac{m-4}{2}}&:=\big\{ \textstyle  P_{\big(\overline{\alpha}, \frac{m-4}{2}, \frac{m-2}{2}\big)},    P_{\big(\overline{\alpha}, \frac{m-4}{2}, \frac{m}{2}\big)}, \ldots,   P_{ \big(\overline{\alpha}, \frac{m-4}{2}, m\big)} \big\},\\  
 R^{\overline{\alpha}}_{\frac{m-2}{2}}&:=\big\{ \textstyle P_{ \big(\overline{\alpha}, \frac{m-2}{2}, \frac{m}{2}\big)},    P_{\big(\overline{\alpha}, \frac{m-2}{2}, \frac{m+2}{2}\big)}, \ldots,    P_{\big(\overline{\alpha}, \frac{m-2}{2}, m\big)} \big\},\\
  &\; \; \vdots \\
    R^{\overline{\alpha}}_{m-1}&:=\big\{ \textstyle  P_{\big(\overline{\alpha}, (m-1), m\big)} \big\}.
           \end{align*}
we call these sets the rows of $T_{(1, 2,\ldots, \frac{m-6}{2} )}$.
Now consider the  configuration of subsets corresponding to the first row
\begin{lemma} \label{varphi-Row}          
 $\varphi^m(R^{\overline{\alpha}}_{\frac{m-4}{2}})=A_r^2$          
\end{lemma}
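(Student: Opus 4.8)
The plan is to read off the matrix $\varphi^m(R^{\overline{\alpha}}_{\frac{m-4}{2}})$ entry by entry and to recognise it as the vertex--edge incidence matrix of a complete graph, which is precisely how $A_r^2$ is assembled in Definition \ref{Ass-Matrix}. First I would record the common shape of the elements of this first row. Since $\overline{\alpha}=(1,2,\ldots,\frac{m-6}{2})$, every $P_{\alpha}\in R^{\overline{\alpha}}_{\frac{m-4}{2}}$ has $supp\{\alpha\}=B\cup\{j\}$, where $B=\{P_1,\ldots,P_{\frac{m-4}{2}}\}$ is a fixed block and $j$ is a free index running over $V:=\{\tfrac{m-2}{2},\tfrac{m}{2},\ldots,m\}$. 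As $|V|=m-\tfrac{m-4}{2}=\tfrac{m+4}{2}=r+1$, the set $R^{\overline{\alpha}}_{\frac{m-4}{2}}$ has exactly $r+1$ elements, matching the row count of $A_r^2$.

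Next I would analyse the columns through the definition of $S_{P_{\alpha}}$ from \ref{Setconf2}. A coordinate $P_{\beta}\in C_{\frac{m}{2}}(\Sigma_m)$ gives a nonzero column of $\varphi^m(R^{\overline{\alpha}}_{\frac{m-4}{2}})$ exactly when $supp\{\alpha\}\subset supp\{\beta\}$ for some $P_{\alpha}$ in the row; since $|supp\{\beta\}|-|B|=\tfrac{m}{2}-\tfrac{m-4}{2}=2$, these columns are exactly the $P_{\beta}$ with $supp\{\beta\}=B\cup\{x,y\}$ for a $2$-subset $\{x,y\}\subset V$, giving $\binom{r+1}{2}=C_2^{r+1}$ columns, again matching $A_r^2$. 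The decisive point is then the incidence rule: the entry in row $j$ and column $\{x,y\}$ equals $1$ iff $B\cup\{j\}\subset B\cup\{x,y\}$, i.e.\ iff $j\in\{x,y\}$. By Lemma \ref{onesroow} (applied with $n=k=m$) each row carries $m-\tfrac{m-2}{2}=r$ ones, and each pair $\{x,y\}$ is incident to exactly its two endpoints, so every column carries $2$ ones.

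This identifies $\varphi^m(R^{\overline{\alpha}}_{\frac{m-4}{2}})$, after deleting the identically zero columns, with the incidence matrix of the complete graph on the vertex set $V$ of size $r+1$. To conclude I would order the rows by the increasing free index $j=\tfrac{m-2}{2}<\tfrac{m}{2}<\cdots<m$ and the columns lexicographically by the pair $\{x,y\}$, and compare block by block with $A_r^2={\EuScript P}({\EuScript O}(\underline{r}),{\EuScript O}(\underline{r-1}),\ldots,{\EuScript O}(\underline{1}))$. The block of columns whose smaller endpoint is the $t$-th vertex consists of $r+1-t$ pairs; its top row (that vertex) is all ones while the remaining rows form an identity aligned at the bottom, which is exactly ${\EuScript O}(\underline{r+1-t})$. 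Hence the assembled matrix is $A_r^2$.

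The combinatorial content, namely that this is the $K_{r+1}$ incidence matrix, is immediate once the common base $B$ is factored out. The only delicate step, and the one I expect to be the main obstacle, is the bookkeeping that pins down the row and column orderings so that the result is literally $A_r^2$ rather than merely equal to it up to permutation, as the paper already permits in Lemma \ref{phiinyct}.
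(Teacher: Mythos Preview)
Your argument is correct and follows the same route as the paper: both compute the matrix row by row, observe that the block of columns attached to each ``smaller endpoint'' yields ${\EuScript O}(\underline{r+1-t})$, and then assemble these blocks via ${\EuScript P}$ to obtain $A_r^2$. Your framing through the vertex--edge incidence matrix of $K_{r+1}$ is a clean way to package exactly the intersection computations the paper writes out explicitly (the single common element $P_{(\overline{\alpha},\frac{m-4}{2},\frac{m-2}{2},\frac{m}{2})}$ between consecutive $S_{P_\alpha}$'s, etc.), and it makes the block structure transparent; but the underlying verification is identical. One minor slip: your $B$ should be the index set $\{1,\ldots,\tfrac{m-4}{2}\}$ rather than $\{P_1,\ldots,P_{\frac{m-4}{2}}\}$, so that $supp\{\alpha\}=B\cup\{j\}$ and $supp\{\beta\}=B\cup\{x,y\}$ are literally subsets of $[m]$.
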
           
 \begin{proof}
Doing $\overline{\alpha}$ as in \ref{alpha1113} we have:\\
      the  first row of the matrix
    $\varphi^m(T^1)$ is
$$\varphi^m(P_{\big(\overline{\alpha}, \frac{m-4}{2}, \frac{m-2}{2}\big)})=(\overbrace{1,\ldots,1}^r,0,\ldots,0),
$$
Note that $$X_{P_{\big(\overline{\alpha}, \frac{m-4}{2}, \frac{m-2}{2}\big)}}\cap X_{P_{\big(\overline{\alpha}, \frac{m-4}{2}, \frac{m}{2}\big)}}=\big\{ P_{(\overline{\alpha}, \frac{m-4}{2}, \frac{m-2}{2}, \frac{m}{2})}\big\}$$
so  the first two rows of the matrix $\varphi^m(T^1)$ are
    $$\begin{pmatrix}
    \varphi^m(P_{(\overline{\alpha}, \frac{m-4}{2}, \frac{m-2}{2})})  \\
    \varphi^m(P_{(\overline{\alpha}, \frac{m-4}{2}, \frac{m}{2})})
    \end{pmatrix}=\begin{pmatrix}
    \overbrace{1,1,\ldots,1}^r,& 0,\ldots,0,& 0,\ldots,0 \\
    1,0,\ldots,0,&\underbrace{1,\ldots,1}_{r-1},&0,\ldots,0
    \end{pmatrix},
$$
this is justified by
\begin{align*}
X_{P_{(\overline{\alpha}, \frac{m-4}{2}, \frac{m-2}{2})}}\cap X_{P_{(\alpha, \frac{m-4}{2}, \frac{m+2}{2})}}&=\big\{ P_{(\overline{\alpha}, \frac{m-4}{2}, \frac{m-2}{2}, \frac{m+2}{2})}\big\}\\
X_{P_{(\overline{\alpha}, \frac{m-4}{2}, \frac{m}{2})}}\cap X_{P_{(\alpha, \frac{m-4}{2}, \frac{m+2}{2})}}&=\big\{ P_{(\overline{\alpha}, \frac{m-4}{2}, \frac{m}{2}, \frac{m+2}{2})}\big\}
\end{align*}
similarly the first three rows of the matrix $\varphi^m(T_{1}^1)$  are
   $$
    \begin{pmatrix}
    \varphi^m(P_{(\overline{\alpha}, \frac{m-4}{2}, \frac{m-2}{2})})  \\
    \varphi^m(P_{(\overline{\alpha}, \frac{m-4}{2}, \frac{m}{2})})\\
    \varphi^m(P_{(\overline{\alpha}, \frac{m-4}{2}, \frac{m+2}{2})})
    \end{pmatrix}=\begin{pmatrix}
    \overbrace{1,1,\ldots,1}^r,& 0,\ldots,0,& 0,\ldots,0, & 0,\ldots,0 \\
    1,0,\ldots,0,&\underbrace{1,\ldots,1}_{r-1},&0,\ldots,0, & 0,\ldots,0\\
    0,1,\ldots,0,&1,0,\ldots,0&\underbrace{1,\ldots,1}_{r-2},& 0,\ldots,0
    \end{pmatrix},
    $$
    and so on. As  consequence we obtain  
    \begin{align*}
    \varphi^m( R^{\overline{\alpha}}_{\frac{m-4}{2}})&= {\EuScript P}({\EuScript O}(\underline{r}), {\EuScript O}(\underline{r-1}), \ldots, {\EuScript O}(\underline{2}), {\EuScript O}(\underline{1})) \\
    &=A_r^2
    \end{align*} 
    \end{proof}
 \begin{lemma} \label{varphi-Tria}          
  $\varphi^m(T_{1,2, \ldots, \frac{m-6}{2}})=A_r^3$ 
 \end{lemma}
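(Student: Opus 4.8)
The plan is to identify $\varphi^m\big(T_{(1,2,\ldots,(m-6)/2)}\big)$ with a purely combinatorial incidence matrix and then recognize that matrix as $A_r^3$ through the recursive block form of Theorem \ref{Fragment-11}. First I would use \ref{sumtria111} to write $T_{\overline{\alpha}}=\{P_{\overline{\alpha}}\}\times C_2(\Gamma)$, where $\overline{\alpha}=(1,2,\ldots,(m-6)/2)$ and $\Gamma=\{P_{(m-4)/2},\ldots,P_m\}$ is the complement of $\mathrm{supp}\{\overline{\alpha}\}$ in $\Sigma_m$; note $|\Gamma|=r+2$. A column $P_\beta$ of $\varphi^m$ is nonzero on $T_{\overline{\alpha}}$ exactly when $\mathrm{supp}\{\beta\}\supset\mathrm{supp}\{\overline{\alpha}\}$, and then the three remaining elements of $\mathrm{supp}\{\beta\}$ automatically lie in $\Gamma$, so the nonzero columns are indexed by $C_3(\Gamma)$. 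By the definition \ref{Sec-Mm} of $\varphi^m$, the entry in row $P_{(\overline{\alpha},\epsilon_1,\epsilon_2)}$ and column $P_{(\overline{\alpha},\delta_1,\delta_2,\delta_3)}$ equals $1$ precisely when $\{\epsilon_1,\epsilon_2\}\subset\{\delta_1,\delta_2,\delta_3\}$. Thus the lemma reduces to showing that the $2$-against-$3$ containment incidence matrix on the $(r+2)$-element set $\Gamma$ equals $A_r^3$.

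Next I would prove, by induction on $N$, the slightly more general combinatorial fact that the $2$-against-$3$ containment incidence matrix on any $N$-element set is $A_{N-2}^3$, and then apply it with $N=r+2$. For the inductive step let $g$ be the least element of the ground set and split both the rows (the $2$-subsets) and the columns (the $3$-subsets) according to whether they contain $g$. The four blocks are computed directly: the pairs containing $g$ against the triples containing $g$ give, after deleting $g$, the singleton-against-pair incidence on the remaining $N-1$ points, which is $A_{N-2}^2$ by Definition \ref{Ass-Matrix} (and is precisely Lemma \ref{varphi-Row} applied to the first row $R^{\overline{\alpha}}_{(m-4)/2}$ in the case $N=r+2$); the pairs containing $g$ against the triples avoiding $g$ give the zero block; the pairs avoiding $g$ against the triples containing $g$ give the identity $I_{C_2^{N-1}}$, since $\{x,y\}\subset\{g,u,v\}$ with $x,y\neq g$ forces $\{x,y\}=\{u,v\}$; and the pairs avoiding $g$ against the triples avoiding $g$ are the $2$-against-$3$ incidence on the $N-1$ remaining points, which is $A_{N-3}^3$ by the induction hypothesis. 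Assembling the blocks yields $\left(\begin{smallmatrix} A_{N-2}^2 & 0 \\ I_{C_2^{N-1}} & A_{N-3}^3\end{smallmatrix}\right)$, which by Theorem \ref{Fragment-11} is $A_{N-2}^3$; the base case $N=3$ is the $3\times 1$ all-ones column $A_1^3$.

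I would then restate the specialization back to $T_{\overline{\alpha}}$ in the language already set up, to connect it with the defining recursion. Grouping the columns $C_3(\Gamma)$ by their minimal element reproduces exactly the side-by-side blocks ${\EuScript O}(A_r^2),{\EuScript O}(A_{r-1}^2),\ldots,{\EuScript O}(A_1^2)$ of Definition \ref{Ass-Matrix}: the column block whose triples have minimum $P_{(m-4)/2+j}$ carries $A_{r-j}^2$ on the pairs through that minimum (the row set $R^{\overline{\alpha}}$ sitting above it in \ref{unionTriaReng}) and the identity on the pairs strictly below it. Hence $\varphi^m(T_{\overline{\alpha}})={\EuScript P}\big({\EuScript O}(A_r^2),\ldots,{\EuScript O}(A_1^2)\big)=A_r^3$, in agreement with the row decomposition \ref{unionTriaReng}.

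The main obstacle I anticipate is the bookkeeping of orderings rather than any conceptual difficulty: I must order the $2$-subsets and the $3$-subsets of $\Gamma$ so that the lower-left block comes out as the literal identity $I_{C_2^{N-1}}$, pairing each triple $\{g,x,y\}$ with the pair $\{x,y\}$, and so that the bottoms of the successive identity blocks align as required by the operation ${\EuScript P}$. Because $\varphi^m$ is injective by Corollary \ref{canes2} (already used in Lemma \ref{phiinyct}), the rows are distinct and these reorderings are harmless up to row permutation, which is the sense in which the equality $\varphi^m(T_{\overline{\alpha}})=A_r^3$ is to be read; making that precise, and keeping the induction ground set genuinely smaller at each step, are the only points that demand care.
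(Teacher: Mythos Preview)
Your proposal is correct. The reduction of $\varphi^m(T_{\overline{\alpha}})$ to the $2$-versus-$3$ containment incidence on the $(r+2)$-element complement $\Gamma$ is exactly the right move, and your induction on $N$ via the $2\times 2$ block recursion of Theorem~\ref{Fragment-11} goes through cleanly; the base case $A_1^3$ and the four blocks are identified correctly.

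The paper takes a related but genuinely different route. Rather than inducting through Theorem~\ref{Fragment-11}, it works directly with the side-by-side concatenation of Definition~\ref{Ass-Matrix}: it decomposes $T_{\overline{\alpha}}$ into the row-groups $R^{\overline{\alpha}}_j$ of~(\ref{unionTriaReng}) (pairs with fixed first element), invokes Lemma~\ref{varphi-Row} for the first row-group, and then adjoins the remaining row-groups one at a time, observing that each new $R^{\overline{\alpha}}_j$ contributes a partial identity under the already-occupied columns together with a fresh $A^2_{r-j}$ block on new columns. After all row-groups are added one reads off $A_r^2(I_{C_2^{r+1}})\sqcup\cdots\sqcup A_1^2(I_{C_2^2})=A_r^3$. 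Your argument instead splits both rows and columns simultaneously by the least element and recurses on the bottom-right block; this is tidier, avoids the repeated $\sqcup$ bookkeeping, and yields the more general statement that the $2$-versus-$3$ incidence on any $N$ points equals $A_{N-2}^3$, while the paper's approach has the virtue of tracing Algorithm~I step by step. Your third paragraph already recognizes the column-grouping viewpoint, so you have in effect sketched both proofs; the ordering caveat you flag is real and applies equally to the paper's computation.
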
   
  \begin{proof}    
    Now, let $I^r$ be the matrix given by the first $r$ rows of the
    identity matrix $I_{C_2^{r+1}}$,  where $\sqcup$ means
joining together side-by-side and aligning the bottoms of the
corresponding identity matrices. It is also easy to see that   
  $\varphi^m(R^{\overline{\alpha}}_{\frac{m-2}{2}})= I^r \sqcup A^2_{r-1}$, consequently we have the following
     \begin{align*} \varphi^m\big(
            R^{\overline{\alpha}}_{\frac{m-4}{2}}\cup 
            R^{\overline{\alpha}}_{\frac{m-2}{2}}\big)& =A_r^2(I^r)\sqcup A_{r-1}^2,\\      
        \varphi^m\big( R^{\overline{\alpha}}_{\frac{m-4}{2}}\cup
       R^{\overline{\alpha}}_{\frac{m-2}{2}}\cup R^{\overline{\alpha}}_{\frac{m}{2}}\big)&=A_r^2(I^{r+(r-1)}) \sqcup \\
        &\sqcup A_{r-1}^2(I^{r-1})\sqcup A_{r-2}^2,\\
      \vdots\quad\quad \quad\quad\quad\quad\quad & \quad\quad\quad \quad\quad\quad\vdots\\ 
        \varphi^m\big( R^{\overline{\alpha}}_{\frac{m-4}{2}}\cup \cdots \cup
         R^{\overline{\alpha}}_{m-2}\big)&
        =A_r^2(I^{r+(r-1)+\cdots+2})\sqcup\\
       &\sqcup A_{r-1}^2(I^{(r-1)+\cdots+2}) \sqcup\\
        &\qquad
        \sqcup\cdots\sqcup A_{2}^2(I^2)\sqcup A_1^2,\\
        \varphi^m\big( R^{\overline{\alpha}}_{\frac{m-4}{2}}\cup \cdots \cup
        R^{\overline{\alpha}}_{m-1}\big)&
        =A_r^2(I^{r+(r-1)+\cdots+1})\sqcup\\
       & A_{r-1}^2(I^{(r-1)+\cdots+1})\sqcup\\
            & \sqcup\cdots\sqcup A_{2}^2(I^3)\sqcup
        A_1^2(I^1),
        \end{align*} 
        where for the last equality we just notice that
    $I^{r+(r-1)+\cdots+1}= I_{C_2^{r+1}}$ and similarly for the other
    $I^t$ in that formula. 
                  
   From the above,  the block stepped matrix
    \begin{gather}\label{imaL} 
\varphi^m(T_{1,2, \ldots, \frac{m-6}{2}}) = A_r^2(I_{C^{r+1}_2})\sqcup A_{r-1}^2(I_{C^{r}_2})\sqcup\cdots\sqcup A_2^2(I_{C^{3}_2})\sqcup A_1^2(I_{C^{2}_2})=A_r^3
\end{gather}
\end{proof}
Recall of \ref{sumtria111} what 
$$T_{\big(1,2,\ldots,\frac{m-8}{2},\frac{m-6}{2}\big)}=\{ P_{(1,\ldots, {(m-8)/2},
{(m-6)/2})}\}\times C_2\{[(m-6)/2,m]\}$$.  Then we recursively define the following sets:
$$\begin{array}{lr}{\begin{array}{l}
    T_{1}^1 =T_{(1,2,\ldots,\frac{m-8}{2},\frac{m-6}{2})}\\
    T_{2}^1 =\bigcup_{i=0}^{\frac{m+2}{2}}T_{(1, 2,\ldots,
        \frac{m-8}{2}, \frac{m-6}{2}+i)}\\
    T_{3}^1 =T_{2}^1\cup\bigcup_{i=0}^{\frac{m}{2}}T_{(1, 3,\ldots,
        \frac{m-6}{2}, \frac{m-4}{2}+i)}\\
    T_{4}^{1} =T_{3}^1\cup \bigcup_{i=0}^{\frac{m-2}{2}}T_{(1,
        4,\ldots,\frac{m-4}{2},\frac{m-2}{2}+i)}\\
    \;\;\,\quad \vdots  \\
    T_{\frac{m+4}{2}}^{1} =T_{\frac{m+2}{2}}^1\cup\bigcup_{i=0}^{1}T_{(1, \frac{m+4}{2},\ldots,m-4,m-3+i)}\\
    T_{\frac{m+6}{2}}^{1} =T_{\frac{m+4}{2}}^1\cup T_{(1,
        \frac{m+6}{2},\ldots,m-3,m-2)}
    \end{array}} &
{\begin{array}{l}
    T_{2}^2 =T_{(2,3,\ldots,\frac{m-6}{2},\frac{m-4}{2})}\\
    T_{3}^2 =\bigcup_{i=0}^{\frac{m}{2}}T_{(2,3,\ldots, \frac{m-6}{2},
        \frac{m-4}{2}+i)}\\
    T_{4}^2 =T_{3}^2\cup\bigcup_{i=0}^{\frac{m-2}{2}}T_{(2,4,\ldots,
        \frac{m-4}{2}, \frac{m-2}{2}+i)}\\
    T_{5}^{2} =T_{4}^2\cup
    \bigcup_{i=0}^{\frac{m-4}{2}}T_{(2,5,\ldots,\frac{m-2}{2},
        \frac{m}{2}+i)}\\
    \;\;\,\quad \vdots \\
    T_{\frac{m+4}{2}}^{2} =T_{\frac{m+2}{2}}^2\cup\bigcup_{i=0}^{1}T_{(2, \frac{m+4}{2},\ldots,m-3+i)}\\
    T_{\frac{m+6}{2}}^{2} =T_{\frac{m+4}{2}}^2\cup T_{(2,
        \frac{m+6}{2},\ldots,m-2)}
    \end{array}}
    \end{array}
 $$
$$\begin{array}{lr}{\begin{array}{l}
    T_{3}^3 = T_{(3,4,\ldots,\frac{m-4}{2},\frac{m-2}{2})}\\
    T_{4}^3 = \bigcup_{i=0}^{\frac{m-2}{2}}T_{(3,4,\ldots, \frac{m-4}{2},\frac{m-2}{2}+i)}\\
    T_{5}^3 = T_{4}^3\cup\bigcup_{i=0}^{\frac{m-4}{2}}T_{(3,5,\ldots,
        \frac{m-2}{2},
        \frac{m}{2}+i)}\\
    T_{6}^{3} = T_{5}^3\cup
    \bigcup_{i=0}^{\frac{m-6}{2}}T_{(3,6,\ldots,\frac{m}{2},
        \frac{m+2}{2}+i)}\\
    \;\;\,\quad \vdots  \hspace{5.3cm} \cdots \\
    T_{\frac{m+4}{2}}^{3} = T_{\frac{m+2}{2}}^3\cup\bigcup_{i=0}^{1}T_{(3,6,\ldots,m-3+i)}\\
    T_{\frac{m+6}{2}}^{3} = T_{\frac{m+4}{2}}^3\cup T_{(3,
        \frac{m+6}{2},\ldots,m-2)}
    \end{array}}  &
{\begin{array}{l}
    T_{\frac{m+2}{2}}^{\frac{m+2}{2}}=T_{(\frac{m+2}{2},\frac{m+4}{2},\ldots,m-4,m-2)}\\
    T_{\frac{m+4}{2}}^{\frac{m+2}{2}}=\bigcup_{i=0}^1T_{(\frac{m+2}{2},\frac{m+4}{2},\ldots,m-4,m-3+i)}\\
    T_{\frac{m+6}{2}}^{\frac{m+2}{2}}=T_{\frac{m+4}{2}}^{\frac{m+2}{2}}\cup T_{(\frac{m+2}{2},\frac{m+6}{2},\ldots,m-4,m-2)}\\
    \quad\qquad \vdots\\
    T_{\frac{m+6}{2}}^{\frac{m+2}{2}}=T_{(\frac{m+4}{2},\frac{m+6}{2},\ldots,m-3,m-2)}.
    \end{array}}
        \end{array}
$$
We define 
\begin{align}
 T_{m}&=T_{\frac{m+6}{2}}^{1}\cup T_{\frac{m+6}{2}}^{2}\cup
\cdots \cup T_{\frac{m+6}{2}}^{\frac{m+4}{2}}\\
T_{m}&=\cup_{i=1}^{\frac{m+4}{2}}T^i _{\frac{m+6}{2}}
\end{align}
By lemma \ref{lem2.1} it is easy to see that 
\begin{equation}\label{UnionTriag001}
C_{\frac{m-2}{2}}(\Sigma_m)=T_m
\end{equation}
Considering similar arguments to those given in the proof of the lemma \ref{varphi-Tria} we have that

\begin{align*}
\varphi^m(T_{1,2, \ldots, \frac{m-6}{2}})&=A_r^3\\
\varphi^m(T_{1,2, \ldots, \frac{m-6}{2}}\cup T_{1,2, \ldots, \frac{m-4}{2}})&=A_r^3(\Gamma^{C^r_1})\cup A_{r-1}^3\\
\varphi^m(T_{1,2, \ldots, \frac{m-6}{2}}\cup T_{1,2, \ldots, \frac{m-4}{2}}\cup T_{1,2, \ldots, \frac{m-2}{2}})&=A_r^3(\Gamma^{C^r_1+C^r_2})\cup A_{r-1}^3(\Gamma^{C^r_2})\sqcup  A_{r-2}^3 \\
&\vdots\\
\varphi^m(T_2^1)&=
        A_r^3(I_{C_3^{r+2}})\sqcup  \cdots\sqcup A_1^3(I_{C_3^{3}})\\
        &= A_r^4
\end{align*}
\begin{theorem}\label{TheorAr{r-1}}
If $m\geq 8$ then ${\EuScript L}_r=A_r^{r-1}$
\end{theorem}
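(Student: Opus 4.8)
The plan is to reduce the identity ${\EuScript L}_r = A_r^{r-1}$ to a statement about the map $\varphi^m$ and then verify it by an induction on the number of ones per column. First I would record that the two matrices already have the same order: by the order formula for $A_k^{\ell}$, the matrix $A_r^{r-1}$ has order $C_{r-2}^{2r-2}\times C_{r-1}^{2r-2}$, and since $r=\frac{m+2}{2}$ this is exactly $C_{\frac{m-2}{2}}^{m}\times C_{\frac{m}{2}}^{m}$, the order of ${\EuScript L}_r$. Hence it suffices to match the entries. By Lemma \ref{phiinyct} we have ${\EuScript L}_r=\varphi^m\big(C_{\frac{m-2}{2}}(\Sigma_m)\big)$ up to row permutation, and by \ref{UnionTriag001} the source set decomposes as $C_{\frac{m-2}{2}}(\Sigma_m)=T_m$. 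So it is enough to prove $\varphi^m(T_m)=A_r^{r-1}$.

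The guiding idea is that the nested family of triangle-set unions $T_j^i$ is organized so that applying $\varphi^m$ reproduces, level by level, the recursive construction of Definition \ref{Ass-Matrix}: passing from one union to the next enlarged union raises by one the number of ones per column, i.e. increments the superscript $\ell$ of $A_r^{\ell}$. I would carry this out as an induction whose base cases are already in hand: $\varphi^m$ of a single row $R^{\overline{\alpha}}_{\frac{m-4}{2}}$ equals $A_r^2$ (Lemma \ref{varphi-Row}), $\varphi^m$ of a full triangle $T_{\overline{\alpha}}$ equals $A_r^3$ (Lemma \ref{varphi-Tria}), and the computation preceding the theorem gives $\varphi^m(T_2^1)=A_r^4$.

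For the inductive step I would assume that $\varphi^m$ applied to the relevant union of triangles yields $A_r^{\ell}$, together with the shifted blocks $A_{r-1}^{\ell},\ldots,A_1^{\ell}$ arising from the sub-triangles whose leading index has been advanced (these are indexed by the superscript $i$ in $T_j^i$). The effect of adjoining the next block of triangle sets is, under $\varphi^m$, precisely the operation ${\EuScript O}$ of stacking an identity matrix $I_{C_{\ell}^{r+\ell-1}}$ below each block, followed by the concatenation ${\EuScript P}$ that aligns the bottoms of those identity blocks; by Definition \ref{Ass-Matrix} and Theorem \ref{Fragment-11} this is exactly $A_r^{\ell+1}=A_r^{\ell}(I)\sqcup A_{r-1}^{\ell}(I)\sqcup\cdots\sqcup A_1^{\ell}(I)$. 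Iterating until the union exhausts $T_m$ raises the superscript to $r-1$, so that $\varphi^m(T_m)=A_r^{r-1}$, which together with the reduction above yields the theorem.

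The main obstacle I anticipate is the combinatorial bookkeeping of this step rather than any conceptual difficulty. Concretely, one must check that the incidence pattern dictated by $supp\{\alpha\}\subset supp\{\beta\}$ places the ones of $\varphi^m$ exactly into the block-stepped positions produced by the $\sqcup$-operation with the prescribed identity insertions; this amounts to verifying that two rows of a triangle overlap in a single column, as in the pairwise-intersection computations of Lemma \ref{varphi-Tria}, and that the alignment of the identity bottoms matches the recursive index shift $r\mapsto r-1$ governing the shifted blocks. Once the indices are pinned down as in the displayed computations leading to $\varphi^m(T_2^1)=A_r^4$, the induction closes and the equality ${\EuScript L}_r=A_r^{r-1}$ follows.
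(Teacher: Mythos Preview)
Your proposal is correct and follows essentially the same approach as the paper: reduce to $\varphi^m(T_m)=A_r^{r-1}$ via Lemma~\ref{phiinyct} and \eqref{UnionTriag001}, use Lemmas~\ref{varphi-Row} and~\ref{varphi-Tria} as base cases, and then climb the superscript of $A_r^{\ell}$ by successively enlarging the union of triangle sets so that each enlargement realizes the ${\EuScript O}/{\EuScript P}$ step of Definition~\ref{Ass-Matrix}. The paper organizes this climb as two displayed chains of equalities---first $\varphi^m(T_1^1)=A_r^3,\ \varphi^m(T_2^1)=A_r^4,\ \ldots,\ \varphi^m(T^1_{(m+6)/2})=A_r^{r-2}$ (varying the subscript $j$), then $\varphi^m\big(\cup_{i=1}^{j}T^i_{(m+6)/2}\big)$ up to $A_r^{r-1}$ (varying the superscript $i$)---whereas you phrase it as a single induction on $\ell$; the content is the same and the bookkeeping you flag is exactly what the paper's displays are encoding.
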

\begin{proof}
Using \ref{imaL} and similar arguments to the proof of lemma \ref{varphi-Tria} we have 
        \begin{align*}
        \varphi^m(T^1_1)&=A_r^2(I_{C_2^{r+1}})\sqcup A_{r-1}^2(I_{C_2^{r}})\sqcup \cdots\sqcup A_2^2(I_{C_2^{3}})\sqcup A_1^2(I_{C_2^{2}})= A_r^3,\\
        \varphi^m(T_2^1)&=
        A_r^3(I_{C_3^{r+2}})\sqcup A_{r-1}^3(I_{C_3^{r+1}})\sqcup \cdots\sqcup A_2^3(I_{C_3^{4}})\sqcup A_1^3(I_{C_3^{3}})= A_r^4,\\
        \varphi^m( T_3^1)&=
        A_r^4(I_{C_4^{r+3}})\sqcup A_{r-1}^4(I_{C_4^{r+2}})\sqcup \cdots\sqcup A_2^4(I_{C_4^{5}}) \sqcup A_1^4(I_{C_4^{4}})= A_r^5,\\
     \vdots   &\quad\quad\quad\quad\quad\quad\quad\quad\quad\quad\vdots\\
        \varphi^m(T_{\frac{m+6}{2}}^1)&=
        A_r^{r-3}(I_{C_{(m-4)/2}^{m-2}})\sqcup A_{r-1}^{r-3}(I_{C_{(m-4)/2}^{m-3}})\sqcup \cdots\sqcup 
         A_1^{r-3}(I_{C_{(m-4)/2}^{(m-2)/2}}) \\
         &=A_r^{r-2}.
 \end{align*}      
Since
 \begin{align*}
  \varphi^m(T^1_{\frac{m+6}{2}} \cup T^2_{\frac{m+6}{2}}) & = A_r^{r-2}(I^{C_{\frac{m-4}{2}}^{m-2}}) \sqcup A_{r-1}^{r-2} \\
  \varphi^m(\cup_{i=1}^3 T^i_{\frac{m+6}{2}} ) & = A_r^{r-2}(I^{C_{\frac{m-4}{2}}^{m-2} + C_{\frac{m-6}{2}}^{m-3}}) 
  \sqcup   A_{r-1}^{r-2}(I^{C_{\frac{m-6}{2}}^{m-3}}) \sqcup A_{r-2}^{r-2} \\
    \varphi^m(\cup_{i=1}^4 T^i_{\frac{m+6}{2}}) & = A_r^{r-2}(I^{C_{\frac{m-4}{2}}^{m-2} + C_{\frac{m-4}{2}}^{m-3} + C_{\frac{m-4}{2}}^{m-4}} )   \sqcup A_{r-1}^{r-2}(I^{C_{\frac{m-4}{2}}^{m-3}+C_{\frac{m-4}{2}}^{m-4}}) \\
& \sqcup A_{r-2}^{r-2}(I^{C_{\frac{m-4}{2}}^{m-4}}) \sqcup A_{r-3}^{r-2} \\
  \vdots \quad \quad \quad \quad& \quad\quad\quad\quad\quad \quad\quad\quad \vdots \\
     \varphi^m(\cup_{i=1}^{\frac{m+4}{2}}T^i _{\frac{m+6}{2}}) & = A_r^{r-2}(I^{C_{\frac{m-4}{2}}^{m-2} + C_{\frac{m-4}{2}}^{m-3} + \cdots + C_{\frac{m-4}{2}}^{\frac{m-4}{2}} })  \sqcup \cdots \sqcup 
  A_{1}^{r-4}(I^{ C_{\frac{m-4}{2}}^{\frac{m-4}{2}}})  \\
   & = A_r^{r-2}(I_{C_{(m-2)/2}^{m-1}}) 
    \sqcup A_{r-1}^{r-2}(I_{C_{(m-2)/2}^{m-2}})\sqcup \cdots\sqcup  A_1^{r-2}(I_{C_{(m-2)/2}^{(m-2)/2}})\\
    &=A_r^{r-1}.
  \end{align*}
Therefore  by lemma \ref{phiinyct} and by  \ref{UnionTriag001} we have
        \begin{align*}      
{\EuScript L}_r & = \varphi^m\big(T_m\big)\\
        &=A_r^{r-1}.
        \end{align*}    
and this concludes the proof.
\end{proof}
\begin{theorem}\label{ultth1}
Let $2\leq k \leq n$ two arbitrary integers and let $r=\lfloor \frac{n+2}{2}\rfloor$ then the matrix \ref{Lmatrix} and the definition matrix \ref{Ass-Matrix} are equal ${\EuScript A}_{n-\lfloor \frac{k-2}{2}\rfloor}^{\lfloor k/2 \rfloor}=A_{n-\lfloor \frac{k-2}{2}\rfloor}^{\lfloor k/2\rfloor}$
\end{theorem}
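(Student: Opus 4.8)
The plan is to recognise ${\EuScript A}_{n-\lfloor\frac{k-2}{2}\rfloor}^{\lfloor k/2\rfloor}$ as a set-inclusion matrix and to prove that it satisfies \emph{verbatim} the block recursion that defines the fractal matrix in Theorem~\ref{Fragment-11}, after which a single induction closes the argument. Put $b=\lfloor k/2\rfloor$ and $a=n-\lfloor\frac{k-2}{2}\rfloor$. Since $\lfloor\frac{k-2}{2}\rfloor=b-1$ for both parities of $k$, we have $a+b-1=n$ and $b-1=\lfloor\frac{k-2}{2}\rfloor$, so by \ref{Setconf1}--\ref{Lmatrix} the matrix ${\EuScript A}_a^{b}$ is precisely the incidence matrix of $\big(C_{b}(\Sigma_n),S_{P_\alpha}\big)_{P_\alpha\in C_{b-1}(\Sigma_n)}$: its rows are indexed by the $(b-1)$-element sets $P_\alpha$, its columns by the $b$-element sets $P_\beta$, and the $(\alpha,\beta)$ entry equals $1$ iff $\mathrm{supp}\{\alpha\}\subset\mathrm{supp}\{\beta\}$.

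First I would obtain the block form by distinguishing the largest symbol $P_n$ of $\Sigma_n$. Order the rows so that those $P_\alpha$ with $P_n\in\mathrm{supp}\{\alpha\}$ come first, and likewise for the columns. Writing $\alpha=\alpha'\cup\{P_n\}$ and $\beta=\beta'\cup\{P_n\}$ where $P_n$ occurs, a direct inspection of $\mathrm{supp}\{\alpha\}\subset\mathrm{supp}\{\beta\}$ yields four blocks. When both contain $P_n$ the condition becomes $\mathrm{supp}\{\alpha'\}\subset\mathrm{supp}\{\beta'\}$, so after renumbering $\Sigma_n\setminus\{P_n\}$ as $\Sigma_{n-1}$ via Lemma~\ref{incconfisomr} this block is ${\EuScript A}_a^{b-1}$; when both omit $P_n$ it is the inclusion matrix between $(b-1)$- and $b$-sets of $\Sigma_{n-1}$, namely ${\EuScript A}_{a-1}^{b}$; a row containing $P_n$ never lies in a column omitting it, giving a zero block; and a row $P_\alpha$ omitting $P_n$ lies in a column $P_\beta=P_{\beta'}\cup\{P_n\}$ exactly when $\mathrm{supp}\{\alpha\}=\mathrm{supp}\{\beta'\}$, giving the identity $I_{C_{b-1}^{n-1}}$. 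Hence
\[
{\EuScript A}_a^{b}=\left[\begin{tabular}{c|c}
${\EuScript A}_a^{b-1}$ & $0$ \cr\hline
$I_{C_{b-1}^{n-1}}$ & ${\EuScript A}_{a-1}^{b}$
\end{tabular}\right],
\]
and because $a+b-2=n-1$ this is letter for letter the recursion that $A_a^{b}$ obeys in Theorem~\ref{Fragment-11}.

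I would then conclude by induction on $n$, the two sub-blocks living over $\Sigma_{n-1}$. The base cases are $b=1$, where ${\EuScript A}_a^{1}=\underline{a}=A_a^{1}$, and $a=1$, where ${\EuScript A}_1^{b}$ is the single column of $b$ ones $=A_1^{b}$; both coincide with Definition~\ref{Ass-Matrix} by the order lemma for the $A_k^{\ell}$. Granting ${\EuScript A}_a^{b-1}=A_a^{b-1}$ and ${\EuScript A}_{a-1}^{b}=A_{a-1}^{b}$, the two identical recursions force ${\EuScript A}_a^{b}=A_a^{b}$, which is the assertion; the maximal case $k=n$ recovers the identity ${\EuScript L}_r=A_r^{r-1}$ established above. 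The delicate point, and the real obstacle, is that the theorem claims genuine \emph{equality} rather than equivalence up to permutation, so one must verify that peeling off $P_n$ arranges the rows and columns of each sub-block in exactly the order produced by the $\sqcup$-operation of Algorithm~\ref{Algoritm1}. This is the same bookkeeping already performed in Lemma~\ref{varphi-Tria} and in the construction of ${\EuScript L}_r$, where isolating the largest available index is precisely what the $\varphi^m$-triangle decomposition does; transporting that ordering check to an arbitrary layer $b\le\lfloor n/2\rfloor$ is routine but must be recorded.
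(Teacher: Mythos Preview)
Your argument is correct and takes a genuinely different route from the paper. The paper argues by descending induction on $k$: starting from the case $k=n$, where Theorem~\ref{TheorAr{r-1}} has already established ${\EuScript L}_r=A_r^{r-1}$ through the explicit $\varphi^m$--triangle decomposition, it treats smaller $k$ by embedding the configuration into $\Sigma_m$ with $m=n+1,n+2,\ldots$ and recognising ${\EuScript A}_{\overline r}^{\,\overline r-s}$ as $\varphi^m(T^1_{\bullet})$, i.e.\ as one of the intermediate matrices appearing in the staircase that builds ${\EuScript L}_{\overline r}$. Thus the paper deduces Theorem~\ref{ultth1} \emph{from} the full computation of ${\EuScript L}_r$, and only afterwards records the block form as Corollary~\ref{ultth11}.

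You reverse this logic: you prove the block recursion for ${\EuScript A}_a^{b}$ directly by splitting on whether $P_n$ is present, match it term for term against Theorem~\ref{Fragment-11}, and induct on $n$. This is cleaner and logically lighter --- it needs none of the $\varphi^m$-machinery and in fact yields ${\EuScript L}_r=A_r^{r-1}$ as the special case $k=n$ rather than consuming it as input. The price, which you correctly flag, is that literal equality (not merely equality up to row/column permutation) forces you to check that the ordering on subsets of $\Sigma_n$ used in \ref{Lmatrix} is compatible with the $\sqcup$-order of Algorithm~\ref{Algoritm1}; the paper absorbs that check into Lemmas~\ref{varphi-Row}--\ref{varphi-Tria}, whereas in your setup it must be recorded as a standalone observation about lexicographic orderings.
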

\begin{proof}
Induction in $k$. So
If $k=n$ then ${\EuScript L}_r=A_{r}^{r-1}$.\\
Now is  $k=n-1$ then ${\EuScript A}_{n-\frac{(n-1)-2}{2}}^{\frac{n-1}{2}}={\EuScript A}_{\frac{(n+1)+2}{2}}^{\frac{(n+1)+2}{2}-2}={\EuScript A}_{\overline{r}}^{\overline{r}-2}$ and is the incidence matrix of the configuration $(C_{\frac{m}{2}}(\Sigma_m), S_{P_{\alpha}})_{P_{\alpha}\in C_{\frac{m-2}{2}}(\Sigma_m)}$ for $m=n+1$ and $\overline{r}=\frac{m+2}{2}$, moreover $\varphi^m(T_m)=A_{\overline{r}}^{\overline{r}-1}$ and $T^1_{\frac{m+6}{2}}\subset T_m$ so we have to ${\EuScript A}_{\overline{r}}^{\overline{r}-2}=\varphi^m(T^1_{\frac{m+6}{2}})=A_{\overline{r}}^{\overline{r}-2}$      
If $k=n-2$ then ${\EuScript A}_{n-\frac{(n-2)-2}{2}}^{\frac{n-2}{2}}={\EuScript A}_{\frac{(n+2)+2}{2}}^{\frac{(n+2)+2}{2}-3}={\EuScript A}_{\overline{r}}^{\overline{r}-3}$ and is the incidence matrix of the configuration $(C_{\frac{m}{2}}(\Sigma_m), S_{P_{\alpha}})_{P_{\alpha}\in C_{\frac{m-2}{2}}(\Sigma_m)}$ for $m=n+2$ and $\overline{r}=\frac{m+2}{2}$,furthermore $\varphi^m(T_m)=A_{\overline{r}}^{\overline{r}-1}$ and $T^1_{\frac{m+4}{2}}\subset T_m$ so ${\EuScript A}_{\overline{r}}^{\overline{r}-3}=\varphi^m(T^1_{\frac{m+6}{2}})=A_{\overline{r}}^{\overline{r}-3}$\\
 And continuing in this way we have to $k=2$ so ${\EuScript A}_{n-\lfloor \frac{2-2}{2}\rfloor}^{\lfloor 2/2 \rfloor}=
 {\EuScript A}_n^1=A_n^1$
\end{proof}
The following corollary follows directly from \ref{ultth1}
\begin{corollary}\label{ultth11}
Let $2\leq k \leq n$ two arbitrary integers then
 $$   {\EuScript A}_{n-\lfloor \frac{k-2}{2}\rfloor}^{\lfloor k/2\rfloor}= \left[ \begin{tabular}{c|c}
$ A_{n-\lfloor \frac{k-2}{2}\rfloor}^{\lfloor k/2\rfloor-1}$ & $ 0 $ \cr\hline $Id$  & $ A_{n-\lfloor \frac{k-2}{2}\rfloor-1}^{\lfloor k/2\rfloor}$
\end{tabular} \right]$$
where $Id$ is the  identity matrix of order $ C_{\lfloor k/2  \rfloor-1}^{n-\lfloor (k-2)/2\rfloor+\lfloor k/2  \rfloor-2}$
 \end{corollary}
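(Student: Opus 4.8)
The plan is to deduce the statement directly from the two structural results already established---the identification of the incidence matrix with a fractal matrix (Theorem~\ref{ultth1}) and the recursive block decomposition of fractal matrices (Theorem~\ref{Fragment-11})---so that only an index computation remains.

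First I would rewrite both sides of the asserted identity purely in terms of the fractal matrices $A_k^{\ell}$ of Definition~\ref{Ass-Matrix}. By Theorem~\ref{ultth1} we have ${\EuScript A}_{n-\lfloor \frac{k-2}{2}\rfloor}^{\lfloor k/2\rfloor}=A_{n-\lfloor \frac{k-2}{2}\rfloor}^{\lfloor k/2\rfloor}$, while the three blocks on the right of the statement are already expressed with the letter $A$; hence the corollary becomes a statement solely about the matrices $A_k^{\ell}$.

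Next I would invoke Theorem~\ref{Fragment-11}. Setting $K:=n-\lfloor (k-2)/2\rfloor$ and $L:=\lfloor k/2\rfloor$ and substituting $k\mapsto K$, $\ell\mapsto L$ (admissible whenever $K\geq 2$ and $L\geq 2$), that theorem yields
$$A_K^{L} = \left[ \begin{tabular}{c|c}
 $A_K^{L-1}$ & $ 0 $ \cr\hline $I_{C_{L-1}^{K+L-2}}$  & $A_{K-1}^{L}$
\end{tabular} \right].$$
It then remains only to verify that the blocks reproduce those in the statement: the top-left block is $A_{n-\lfloor (k-2)/2\rfloor}^{\lfloor k/2\rfloor-1}$, the bottom-right block is $A_{n-\lfloor (k-2)/2\rfloor-1}^{\lfloor k/2\rfloor}$, and the bottom-left identity has order $C_{L-1}^{K+L-2}=C_{\lfloor k/2\rfloor-1}^{\,n-\lfloor (k-2)/2\rfloor+\lfloor k/2\rfloor-2}$, exactly the order claimed for $Id$. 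An elementary check shows $K\geq 2$ for all $2\leq k\leq n$, so the only constraint to monitor is $L\geq 2$, that is $k\geq 4$.

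The one delicate point, and hence the main obstacle, is not depth but the boundary: for $k\in\{2,3\}$ we have $L=\lfloor k/2\rfloor=1$, so $A_K^{L-1}$ degenerates and Theorem~\ref{Fragment-11} no longer applies. Here I would argue directly from the base of the inductive Definition~\ref{Ass-Matrix} together with Theorem~\ref{ultth1}: in both cases ${\EuScript A}_{n}^{1}=A_n^{1}$ is a single $1\times n$ row of ones, so the displayed block identity is read trivially. Beyond this boundary inspection and the routine parity bookkeeping of the floor functions, no further argument is required, since the substance has been carried by Theorems~\ref{ultth1} and~\ref{Fragment-11}.
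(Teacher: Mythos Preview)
Your proposal is correct and matches the paper's approach: the paper simply asserts that the corollary ``follows directly from \ref{ultth1}'' and gives no further argument, so your explicit reduction via Theorem~\ref{ultth1} to the fractal matrix $A_K^L$ followed by the block decomposition of Theorem~\ref{Fragment-11} is exactly the intended (and only possible) route. Your additional attention to the boundary case $L=1$ and the index verification for the identity block merely makes explicit what the paper leaves implicit.
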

\qed\\
From all of the above, the following theorem can be deduced:
\begin{theorem}\label{finalth}(Bizarre version of Bruijn-Erd\"os Theorem)\\
Let $2\leq k \leq n$ be integers, then the set  $C_{\frac{k}{2}}(\Sigma_n)$ of cardinality $N:=C^n_{\frac{k}{2}}$, and the family of subsets $\{S_{P_{\alpha}} \subset C_{\frac{k}{2}}(\Sigma_n) : P_{\alpha}\in C_{\frac{k-2}{2}}(\Sigma_n) \}$  of cardinality  $M=C^n_{\frac{k-2}{2}}$ satisfy
\begin{description}
\item[I)] $|S_{P_{\alpha}}|=n-(k-2)/2$
\item [II)]  $|S_{P_{\alpha}}\cap S_{P_{\overline{\alpha}}} |\leq 1$
\item[III)] Each $S_{P_{\alpha}}$ has nonempty interseccion  with exactly $k/2$ of the other subsets
\end{description}
Moreover exist matrices $B_f$ and  ${\EuScript A}_{n-\frac{k-2}{2}}^{k/2}$ such that
\begin{description}
\item[ i)] $ B_f$ determines all $k$-isotropic subspaces bajo la inclusion de pl\"ucker of a symplectic vector space $E$ of dimension $2n$
\item[ii)] $B_f$ is direct sum of matrices ${\EuScript A}_{n-\frac{k-2}{2}}^{k/2}$ 
\end{description}
where  ${\EuScript A}_{n-\frac{k-2}{2}}^{k/2}$ satisfy \\
\begin{description}
\item[a)]  is a $(0, 1)$-matrix, sparse
\item[b)]  has $n-(k-2)/2$ ones in each row and $k/2$ ones in each
column.
\item[c)] is built with an recursive algorithm, and 
\item [d)] it is fractal matrix
$${\EuScript A}_{n-\frac{k-2}{2}}^{k/2} = \left[ \begin{tabular}{c|c}
 $A_{n-\frac{k-2}{2}}^{k/2-1}$ & $ 0 $ \cr\hline $Id$  & $ A_{n-\frac{k-2}{2}-1}^{k/2}$
\end{tabular} \right]$$
\end{description}
where $A_{n-\frac{k-2}{2}}^{k/2-1}$ and $A_{n-\frac{k-2}{2}-1}^{k/2}$ they are built with the same recursive algorithm and satisfy properties similar to $a)$ and $b)$ above. 
\end{theorem}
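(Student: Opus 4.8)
The plan is to assemble the statement from results already proved, since every assertion has been isolated earlier as a separate lemma, corollary, or theorem; this final theorem is a synthesis rather than a fresh argument. First I would record the three combinatorial properties of the incidence configuration $\big(C_{\frac{k}{2}}(\Sigma_n), S_{P_{\alpha}}\big)_{P_{\alpha}\in C_{\frac{k-2}{2}}(\Sigma_n)}$ of \ref{Setconf1}. Property $I)$ is exactly Lemma \ref{onesroow}, which gives $|S_{P_{\alpha}}|=n-\frac{k-2}{2}$; property $II)$ is Corollary \ref{canes1}, giving $|S_{P_{\alpha}}\cap S_{P_{\overline{\alpha}}}|\leq 1$ for distinct indices; and property $III)$ is Lemma \ref{unoscolumn}. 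The cardinalities $N=C^n_{k/2}$ and $M=C^n_{(k-2)/2}$ are immediate from the definitions \ref{sigmaaaa1} and \ref{sigmaaaa2}.

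For the geometric content, claim $i)$ is the characterization $IG(k,E)=Z\langle Q_{\alpha^{\prime},\beta^{\prime}}\rangle\cap\ker B_f$ of Theorem \ref{th211m}$(a)$, which rests on Proposition \ref{tonal12} and the isomorphism $\ker B_f\simeq\ker f$ of Corollary \ref{tonal12101}; together these say that $B_f$ cuts out all $k$-isotropic subspaces under the Pl\"ucker embedding \ref{embeddingpluck}. Claim $ii)$, that $B_f$ is a direct sum of blocks of the fractal type, is Theorem \ref{sumdirect}: its decomposition comes from the partition of $I(k-2,2n)$ in Lemma \ref{lem2.5} together with Corollary \ref{sumconfs}, while Lemmas \ref{isomconf1} and \ref{isomconf2} identify each summand as a matrix of the form ${\EuScript A}$ after renumbering the surviving pairs of $\Sigma_n$.

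It remains to record the properties $a)$--$d)$ of the block ${\EuScript A}_{n-\frac{k-2}{2}}^{k/2}$. Properties $a)$ and $b)$, that it is a sparse $(0,1)$-matrix with $n-\frac{k-2}{2}$ ones per row and $k/2$ ones per column, are Proposition \ref{princmatrx}. The decisive step, and the one on which $c)$ and $d)$ genuinely rest, is the identification ${\EuScript A}_{n-\lfloor\frac{k-2}{2}\rfloor}^{\lfloor k/2\rfloor}=A_{n-\lfloor\frac{k-2}{2}\rfloor}^{\lfloor k/2\rfloor}$ of Theorem \ref{ultth1}, which matches the combinatorially defined incidence matrix with the matrix produced by Algorithm \ref{Algoritm1} in Definition \ref{Ass-Matrix}; granting it, $c)$ is immediate and the fractal block decomposition in $d)$ is Corollary \ref{ultth11}, with the self-similarity of the corner blocks supplied by Theorem \ref{Fragment-11}.

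The only genuinely substantive point, and hence the main obstacle, is that identification ${\EuScript A}=A$ of Theorem \ref{ultth1}; everything else is bookkeeping over the citations above. In carrying it out I would pay attention to the parameter ranges, since Theorem \ref{ultth1} is proved by descending induction starting from the Lagrangian case ${\EuScript L}_r=A_r^{r-1}$ of Theorem \ref{TheorAr{r-1}}, and one must check that the floor functions $\lfloor k/2\rfloor$ and $\lfloor (k-2)/2\rfloor$ in the statement collapse to $k/2$ and $(k-2)/2$ in the even case quoted here, so that the corner blocks in $d)$ coincide exactly with those furnished by Corollary \ref{ultth11}.
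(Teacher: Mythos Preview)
Your proposal is correct and follows essentially the same approach as the paper: the paper's own proof is a one-paragraph list of citations, invoking Lemma~\ref{onesroow}, Corollary~\ref{canes1}, and Lemma~\ref{unoscolumn} for $I)$--$III)$, Theorems~\ref{th211m}$(a)$ and~\ref{sumdirect} for $i)$--$ii)$, Proposition~\ref{princmatrx} for $a)$--$b)$, and Theorem~\ref{ultth1} for $c)$--$d)$. You cite exactly the same results, merely unpacking a few of the underlying dependencies (e.g.\ Lemma~\ref{lem2.5}, Corollary~\ref{sumconfs}, Corollary~\ref{ultth11}) that the paper leaves implicit.
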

\begin{proof}
Clearly $N>M$, now $I)$, $II)$, and $III)$ are followed by lemma \ref{onesroow}, corollary \ref{canes1} and lemma \ref{unoscolumn}. For $i)$ and $ii)$ see theorems \ref{th211m} $a)$  and  \ref{sumdirect} respectively. So for 
$a)$ and $b)$ see proposition \ref{princmatrx}.  Of theorem \ref{ultth1} se sigue $c)$ and $d)$.
\end{proof}
\section{An method for to obtain the ${\mathbb F}_q$-rational points of $IG(k, E)$}
Let ${\mathbb F}_q$ be a finite field with $q$ elements, and denote
by $\overline{\mathbb F}_q$ an algebraic closure of ${\mathbb F}_q$. For a vector space $E$  over ${\mathbb F}_q$ of finite dimension $k$, let $\overline{E}=E\otimes_{{\mathbb F}_q}\overline{\mathbb F}_q$ be the corresponding vector space over the algebraically closed field $\overline{\mathbb F}_q$. We will be considering algebraic varieties in the projective space ${\mathbb P}(\overline{E})={\mathbb P}^{k-1}(\overline{\mathbb
F}_q)$. Recall that a projective variety $X\subseteq{\mathbb P}^{k-1}(\overline{\mathbb
F}_q)$ is defined over the finite field ${\mathbb F}_q$ if its
vanishing ideal can be generated by polynomials with coefficients in
${\mathbb F}_q$. 
If the field of definition of the simplectic vector space is ${\mathbb F}_q$ then to the set
\begin{equation}\label{rationpoints1}
IG(k, \overline{E})({\mathbb F}_q)= Z\langle Q_{\alpha^{\prime}, \beta^{\prime}},   \Pi_{\alpha_{rs}}, x_{\alpha}^q-x_{\alpha}\rangle
\end{equation}
To the set \ref{rationpoints1} we call ${\mathbb F}_q$-rational points of $IG(k, \overline{E})$.\\
The cardinality  of ${\mathbb F}_q$-rational points is
\begin{equation}\label{rationpoints2}
|IG(k, \overline{E})({\mathbb F}_q)|=\prod_{i=0}^{k-1}\frac{q^{2n-2i}-1}{q^{i+1}-1}
\end{equation}
The  problem of obtaining its ${\mathbb F}_q$-rational points 
 requires an algorithm, whose complexity computacional 
 depends directly on the  properties of  the matrix  $B_f$. 
Solve the system $B_fw^T=0$ using Gaussian elimination requires at the most $O((C^{2n}_{n})^3)$ arithmetic operations, where
$n=C^{2n}_{n}$. But by the theorem \ref{sumdirect} $B_f$ direct addition of $(0, 1)$-matrices \ref{Lmatrix}  which are 
regular, sparce and structured, which allows the use of iterative methods, which are equally or better efficient algorithms than Gaussian elimination.  So then we can find an efficient method to solve the system
${\EuScript A}_{n-\frac{k-2}{2}}^{k/2}x=0$ where they are required $O((C_{\frac{k}{2}-1}^n)^3)$ arithmetic operations.
So here we propose a "method" to find all rational points:
\begin{equation}\label{Algoritm2}
 Method\; A
\end{equation}
\begin{itemize}
\item[{\rm step 1}] Input: $w=(x_{\alpha})_{\alpha\in I(k, m)}\in {\mathbb F}_q^{C^{2n}_{k}}$ such that $B_fw^T=0$.
\item[{\rm step 2}] If $Q_{\alpha^{\prime},\beta^{\prime}}(w)= 0$, for all $\alpha^{\prime}\in I(k-1,m)$,  $\beta^{\prime} \in I(k+1,m)$, then $w\in IG(k, \overline{E})({\mathbb{F}}_q)$ is stored in a set called $\Sigma_1$ and go to step 1, now with $w\in \ker B_f-\Sigma_1$.
\item[{\rm step 3}]  If $Q_{\alpha^{\prime},\beta^{\prime}}(w)\neq 0$  for some $\alpha^{\prime}\in I(k -1,m)$,  $\beta^{\prime} \in I(k+1,m)$, then $w\notin IG(k, \overline{E})({\mathbb{F}}_q)$ and $w$ is stored in a set called $\Sigma_2$, and go to step 1 but now with $w\in \ker B_f-(\Sigma_1 \cup \Sigma_2)$.
\item[{\rm step 4}] The process ends when the cardinality of $\Sigma_1$ in step 2 is equal to $|IG(k, \overline{E})({\mathbb F}_q)|$  or the cardinality of $\Sigma_2$ in step 3 is equal to $q^b-|IG(k, \overline{E})({\mathbb F}_p)|$ where $b:=C^{2n}_{k}$.
\item[{\rm step 5}] Output: Depending of which one of the following occurs first:
\begin{enumerate}
\item[(1)] $\Sigma_1$ if its cardinality   equals $|IG(k,\overline{E})({\mathbb F}_q)|$, or 
\item[(2)] $\Sigma_2$ if its cardinality   equals  $q^b-|IG(k, \overline{E})({\mathbb F}_q)|$ where $b=C^{2n}_{k}$.
\end{enumerate}
\end{itemize}

The following proposition  justifies the previous algorithm.

\begin{proposition}  
With the previous notation, 
if  $|\Sigma_1|=|IG(k, E)({\mathbb F}_q)|$ then $IG(k, \overline{E})({\mathbb F}_q)=\Sigma_1$ or if $|\Sigma_2|=q^b-|IG(k, \overline{E})({\mathbb F}_q)| $ then $IG(k, \overline{E})({\mathbb F}_q)={\mathbb F}_q^b-\Sigma_2$,  where $b=\binom{m}{k}$.
\end{proposition}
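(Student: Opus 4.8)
The plan is to derive the statement directly from the geometric characterization in Theorem \ref{th211m} a), namely $IG(k,E)=Z\langle Q_{\alpha',\beta'}\rangle\cap\ker B_f$, and then to reduce the correctness of the stopping rule of Method~A to an elementary counting principle: a subset of a finite set whose cardinality equals that of the whole set must be the whole set.

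First I would take $\mathbb{F}_q$-points in the identity of Theorem \ref{th211m} a). The subspace $\ker B_f$ is cut out over $\mathbb{F}_q$ by the linear forms $\Pi_{\alpha_{rs}}$, which are exactly the rows of $B_f$, while $Z\langle Q_{\alpha',\beta'}\rangle$ is cut out by the $\mathbb{F}_q$-rational quadrics $Q_{\alpha',\beta'}$; together with the equations $x_\alpha^q-x_\alpha$ this yields
\begin{equation*}
IG(k,\overline{E})(\mathbb{F}_q)=\{\,w\in\ker B_f\cap\mathbb{F}_q^{b}\ :\ Q_{\alpha',\beta'}(w)=0\ \text{for all}\ \alpha'\in I(k-1,2n),\ \beta'\in I(k+1,2n)\,\}.
\end{equation*}
This set is precisely what Method~A sifts out: in step~1 every candidate lies in $\ker B_f\cap\mathbb{F}_q^{b}$, step~2 deposits $w$ in $\Sigma_1$ exactly when all the quadratic Pl\"ucker polynomials vanish at $w$, and step~3 deposits it in $\Sigma_2$ otherwise. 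Consequently $\Sigma_1$ and $\Sigma_2$ are disjoint, their union is the $\mathbb{F}_q$-locus of $\ker B_f$, and by construction $\Sigma_1\subseteq IG(k,\overline{E})(\mathbb{F}_q)$ while $\Sigma_2\cap IG(k,\overline{E})(\mathbb{F}_q)=\emptyset$.

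With this in hand the two implications follow from the \emph{a priori} value of $|IG(k,\overline{E})(\mathbb{F}_q)|$ supplied by \ref{rationpoints2}. For the first: since $\Sigma_1$ is, at every stage, a subset of the finite set $IG(k,\overline{E})(\mathbb{F}_q)$, the hypothesis $|\Sigma_1|=|IG(k,\overline{E})(\mathbb{F}_q)|$ forces $\Sigma_1=IG(k,\overline{E})(\mathbb{F}_q)$. For the second: $\Sigma_2$ is disjoint from $IG(k,\overline{E})(\mathbb{F}_q)$, so $IG(k,\overline{E})(\mathbb{F}_q)$ is contained in the complement of $\Sigma_2$ taken inside the search space $\ker B_f\cap\mathbb{F}_q^{b}$; comparing cardinalities, once $|\Sigma_2|$ reaches the number of non-isotropic points of that space the containment becomes an equality, which identifies $IG(k,\overline{E})(\mathbb{F}_q)$ with the complement of $\Sigma_2$.

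The only delicate point, and the place where I would spend the most care, is the bookkeeping of cardinalities, since the stopping test compares the running sizes of $\Sigma_1$ and $\Sigma_2$ against the closed form \ref{rationpoints2}. One must check that this count is taken in the same convention (affine cone versus projective locus) in which the algorithm enumerates its vectors $w\in\mathbb{F}_q^{b}$, and that the complement appearing in the second alternative is understood relative to the search space $\ker B_f\cap\mathbb{F}_q^{b}$ rather than to all of $\mathbb{F}_q^{b}$. Once these conventions are fixed consistently, no input beyond Theorem \ref{th211m} a) and the finite-set counting argument is required.
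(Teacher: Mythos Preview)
Your proposal is correct and follows the same line as the paper's proof: establish $\Sigma_1\subseteq IG(k,\overline E)(\mathbb F_q)$ from the characterization $IG=Z\langle Q_{\alpha',\beta'}\rangle\cap\ker B_f$, then invoke equality of finite cardinalities, and handle the second clause by complementation. Your caveat about which ambient set the complement is taken in is well placed; the paper simply asserts $\mathbb F_q^{\,b}=\Sigma_1\cup\Sigma_2$ and $\mathbb F_q^{\,b}-\Sigma_2\subseteq\Sigma_1$ without reconciling this with the fact that Method~A only draws inputs from $\ker B_f$.
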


\begin{proof}
First note that ${\mathbb F}_q^b=\Sigma_1\cup \Sigma_2$ and $\Sigma_1\cap \Sigma_2=\emptyset$. Now, 
if $w=(x_{\alpha})_{\alpha\in I( k, m)}\in {\mathbb P}(\wedge^{k}\overline{E})$ such that $Q_{\alpha^{\prime}, \beta^{\prime}}(w)=0$  for all  $\alpha^{\prime}\in I(\ell-1,2n)$ and $\beta^{\prime}\in I(\ell+1,m)$ and  $B_fw^T=0$, then 
$w\in X({\mathbb F}_q)$.  Therefore,
$\Sigma_1 \subseteq IG(k, \overline{E})({\mathbb{F}}_q)$. As $|\Sigma_1| =
IG(k, \overline{E})({\mathbb F}_q)|$, we have $\Sigma_1 =IG(k,\overline{E})({\mathbb F}_q)$.
Clearly ${\mathbb F}_q^b-\Sigma_2\subseteq \Sigma_1$ and $|{\mathbb F}_q^b-\Sigma_2|=q^b-|\Sigma_2|=|\Sigma_1|$.
\end{proof}




\end{document}